\theoremstyle{definition}
\newenvironment{conjecture}[2][Conjecture]{\begin{trivlist}
\item[\hskip \labelsep {\bfseries #1}\hskip \labelsep {\bfseries \hspace{-0.5mm}#2.}]}{\end{trivlist}}
\newcommand{\descref}[1]{\hyperref[#1]{#1}}
\theoremstyle{definition}
\newtheorem* {theorem*}{Theorem}
\newtheorem{theorem}{Theorem}[section]
\newtheorem{thmdef}[theorem]{Theorem-Definition}
\theoremstyle{definition}
\newtheorem{observation}[theorem]{Observation}
\newtheorem* {example*}{Example}
\newtheorem{lemma}[theorem]{Lemma}
\theoremstyle{definition}
\newtheorem{definition}[theorem]{Definition}
\theoremstyle{definition}
\newtheorem* {notation}{Notation}
\newtheorem{proposition}[theorem]{Proposition}
\newtheorem{corollary}[theorem]{Corollary}
\newtheorem* {remark}{Remark}
\theoremstyle{definition}
\theoremstyle{definition}
\theoremstyle{definition}
\theoremstyle{definition}
\numberwithin{equation}{section}
\def\modu{\ (\mathrm{mod}\ }
\def\({\left(}
\def\){\right)}
       \newcommand{\QQ}{\mathbb{Q}}    \newcommand{\cA}{\mathcal{A}}
\def\NN{\mathbb{N}}
    \def\ZZ{\mathbb{Z}} \def\Aut{\mathrm{Aut}}  
         \def\spanning{\textnormal{-span}}   
  \def\wt{\widetilde}
\newcommand{\cM}{\mathcal{M}}
\def\barr{\begin{array}}
\def\earr{\end{array}}
\def\ba{\begin{aligned}}
\def\ea{\end{aligned}}
\def\be{\begin{equation}}
\def\ee{\end{equation}}
\def\qquand{\qquad\text{and}\qquad}
\def\quand{\quad\text{and}\quad}
\def\I{\mathbf{I}_*}
\def\M{\mathcal{M}}
\def\Des{\mathrm{Des}_L}
\def\omdef{\overset{\mathrm{def}}}
\def\uS{\underline S}
\def\hs{\hspace{0.5mm}}
\def\act{\ltimes}
\newcommand{\Psig}{P^\sigma}
\def\proj{\mathrm{proj}}
\def\cH{\mathcal H}
\def\cM{\mathcal M}
\def\M{\mathcal M_{q^2}}
\def\A{A}
\def\a{a}
\def\ben{\begin{enumerate}}
\def\een{\end{enumerate}}
\renewcommand{\@makefnmark}{\mbox{\textsuperscript{}}}
\begin{document}
\title{Positivity conjectures for Kazhdan-Lusztig theory on twisted involutions: the universal case}
\author{Eric Marberg
 \\ Department of Mathematics \\ Massachusetts Institute of Technology \\ \tt{emarberg@math.mit.edu}
}
\date{}

\maketitle

\begin{abstract}
Let $(W,S)$ be a Coxeter system and let $w \mapsto w^*$ be an involution of $W$ which preserves the set of simple generators $S$. Lusztig and Vogan have recently shown that the set of twisted involutions (i.e., elements $w \in W$ with $w^{-1} = w^*$) naturally generates a module of the Hecke algebra of $(W,S)$ with two  distinguished bases. The  transition matrix between these bases defines  a family of polynomials $P^\sigma_{y,w}$ which one can view as  ``twisted'' analogues of the much-studied Kazhdan-Lusztig polynomials of $(W,S)$. The polynomials  $P^\sigma_{y,w}$ can have negative coefficients, but display several conjectural positivity properties of interest. This paper reviews Lusztig's construction and then proves three such positivity properties for Coxeter systems which are universal (i.e., having no braids relations), generalizing previous work of Dyer. Our methods are entirely combinatorial and elementary, in contrast to the geometric arguments employed by Lusztig and Vogan to prove similar positivity conjectures for crystallographic Coxeter systems.
\end{abstract}

\setcounter{tocdepth}{2}
\tableofcontents

\section{Introduction}

\subsection{Overview} \label{overview-sect}

A nice source of open problems in the representation theory of Coxeter systems  comes from the frequent observation that interesting properties of Weyl groups 
 seem to hold for much larger
classes of reflection groups.
This paper concerns phenomena of this nature which have arisen in ongoing work of Lusztig and Vogan \cite{LV2,LV5,LV,LV3,LV4}.


 Let $(W,S)$ be any Coxeter system, and write $\cH_{q^2}$ for the associated \emph{generic Hecke algebra with parameter $q^2$}: this  is  the usual Hecke algebra (namely, a certain $\ZZ[q^{\pm 1/2}]$-algebra with a basis $\(T_w\)_{w\in W}$ indexed by $W$), but with $q$ replaced by $q^2$ in its defining relations.  A precise definition appears in Section \ref{twisted-intro} below.
%
%
%
%
Next, fix
an automorphism  $* : W \to W$ with order one or two which preserves the set of simple generators   $S$. Write $\I$ for the corresponding set of twisted involutions (i.e., elements $w \in W$ with $w^{-1} = w^*$), 
 and let 
$\cM_{q^2}$ be the free $\ZZ[q^{\pm 1/2}]$-module which this set generates.

 Lusztig \cite{LV2} has shown that   $\cM_{q^2}$ 
 has an  $\cH_{q^2}$-module structure which serves as a natural and interesting analogue of the regular representation of $\cH_{q^2}$ on itself. (Section \ref{twisted-intro} contains the details of this construction.)
The regular representation of $\cH_{q^2}$ possesses a distinguished
  \emph{Kazhdan-Lusztig basis} $( C_w )_{w \in W}$, whose transition matrix from the standard basis $( T_w )_{w \in W}$ defines the much-studied family of \emph{Kazhdan-Lusztig polynomials} $(P_{y,w})_{y,w \in W} \subset \ZZ[q]$. 
  Lusztig's work \cite{LV2} indicates that we may repeat much of this theory for the module $\cM_{q^2}$: it too has a ``Kazhdan-Lusztig basis'' whose transition matrix from the standard basis  defines a family of ``twisted Kazhdan-Lusztig polynomials'' $(P^\sigma_{y,w})_{y,w \in \I} \subset \ZZ[q]$.  

 Many remarkable properties of the Kazhdan-Lusztig basis of $\cH_{q^2}$ appear to have ``twisted'' analogues for the module $\cM_{q^2}$.
For example, one of the most famous aspects of the original Kazhdan-Lusztig polynomials $(P_{y,w})_{y,w \in W}$ is that their   coefficients are always  nonnegative. (This statement, while known in many cases from the work of a number of people, has only recently been proved for all Coxeter systems by Elias and Williamson \cite{EWpaper}.) 
The twisted Kazhdan-Lusztig polynomials $(P^\sigma_{y,w})_{y,w \in \I}$ can have negative coefficients. However, Lusztig and Vogan \cite{LV}   have shown by  geometric arguments   that   the modified polynomials $ \frac{1}{2} (P_{y,w} \pm  P^\sigma_{y,w})$ for $y,w \in \I$  have nonnegative  coefficients whenever $W$ is crystallographic. In fact, for any choice of $(W,S)$ and $*$,  the polynomials $\frac{1}{2} (P_{y,w} \pm  P^\sigma_{y,w})$ belong to $\ZZ[q]$, and Lusztig \cite{LV2} has conjectured that their coefficients are  always  nonnegative. 

Section \ref{conjecture-sect} presents two other positivity conjectures for the ``Kazhdan-Lusztig basis'' of the twisted involution $\cH_{q^2}$-module $\cM_{q^2}$. These serves as  analogues of  longstanding  conjectures  related to the ordinary Kazhdan-Lusztig polynomials $P_{y,w}$. 
After stating 
 these ``twisted'' conjectures, we devote the rest of this paper to proving them for Coxeter systems  $(W,S)$ which are \emph{universal} (i.e., such that $st \in W$ has infinite order for all distinct $s, t \in S$), with $*$ arbitrary.  
 This special case is of interest as it 
 provides an infinite family of Coxeter systems for which our conjectures hold, despite existing outside the geometric context of Weyl groups and affine Weyl groups. 
 Moreover, it is possible in the universal case to derive explicit formulas for the polynomials $P_{y,w}$ and $\Psig_{y,w}$.
 
These results generalize Dyer's work \cite{Dyer} on the Kazhdan-Lusztig polynomials of universal Coxeter systems. A  detailed summary of our methods  appears in Section \ref{summary-sect} at the end of this introduction. (Sections \ref{setup-sect}$-$\ref{conjecture-sect} provide some brief preliminaries needed to state our main results.) The study of the module $\cM_{q^2}$ and its conjectural positivity properties  continues in our companion paper \cite{EM2}, which addresses the case when $(W,S)$ is  finite.

\subsection{Setup}\label{setup-sect}

Throughout we write $\ZZ$ for the integers and $\NN = \{0,1,2,\dots\}$ for the nonnegative integers. We also adopt the following conventions:

\begin{itemize}
\item Let $(W,S)$ be a Coxeter system with length function $\ell : W \to \NN$.


\item Let $\leq $ denote the Bruhat order on $W$. Recall that in this partial order we have $y \leq w$  if and only if for each reduced expression $w = s_1\cdots s_k$ with $s_i \in S$, we have  $y = s_{i_1}\cdots s_{i_m}$ for some  sequence of indices $1\leq i_1 < \dots < i_m \leq k$.


\item Let $\cA = \ZZ[v,v^{-1}]$ be the ring of Laurent polynomials over $\ZZ$ in an indeterminate $v$.

\item Let $q =v^2$. In the sequel, we will refer to $v$ in place of the parameter $q^{1/2}$ used in Section \ref{overview-sect}.

\end{itemize}
The ring $\cA$ will now occupy the role which $\ZZ[q^{\pm1/2}]$ played in the previous section. For background on Coxeter systems and the Bruhat order, see for example \cite{CCG,Hu,Lu}.

\subsection{Kazhdan-Lusztig theory}\label{kl-intro}

Here we briefly recall the definition of the Kazhdan-Lusztig polynomials attached to $(W,S)$.
Let $\cH_q$ denote the free $\cA$-module with basis $\{ t_w : w \in W\}$. 
This module has a unique $\cA$-algebra structure  with respect to which the multiplication rule
\[ t_s t_w = \begin{cases} t_{sw} & \text{if }\ell(sw) = \ell(w) +1 \\ q t_{sw} + (q-1) t_w &\text{if }\ell(sw) = \ell(w)-1 \end{cases}
\] holds 
for each $s \in S$ and $w \in W$. 
 The element $t_w \in \cH_q$ is  more often denoted in the literature by the symbol $T_w$, but here we reserve the latter notation for the   Hecke algebra $\cH_{q^2}$, to be introduced in the next section.

We refer to the algebra $\cH_q$ as the \emph{Hecke algebra of $(W,S)$ with parameter $q$.} A number of good references exist for this much-studied object; see for example \cite{CCG,Hu,KL,Lu}.
The Hecke algebra possesses a unique ring involution  $\overline{\ } : \cH_q \to \cH_q$ with 
$\overline{v^n} = v^{-n}$ and $\overline {t_w} = \(t_{w^{-1}}\)^{-1}$ 
for all $n \in \ZZ$ and $w \in W$, referred to as the \emph{bar operator}, and this gives rise to the following  theorem-definition from Kazhdan and Lusztig's seminal paper \cite{KL}.

\begin{thmdef}[Kazhdan and Lusztig \cite{KL}]  \label{kl-thmdef} For each $w \in W$ there is a unique family of polynomials $\( P_{y,w} \)_{y \in W} \subset \ZZ[q]$ 
with the following three properties:
\ben
\item[(a)] The element $ c_w \omdef =  v^{-\ell(w)} \cdot \sum_{y \in W}   P_{y,w}\cdot t_y $ in $ \cH_q$
has $\overline{c_w} = c_w$.
\item[(b)] $P_{y,w} = \delta_{y,w}$ if $y \not < w$ in the Bruhat order.
\item[(c)] $P_{y,w}$ has degree at most $ \frac{1}{2} \( \ell(w)-\ell(y)-1\)$  as a polynomial in $q$ whenever $y < w$.
\een
\end{thmdef}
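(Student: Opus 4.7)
The plan is to establish both uniqueness and existence of $c_w$ by induction on $\ell(w)$, following the strategy of Kazhdan and Lusztig's original argument.

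For uniqueness, I would suppose $c_w$ and $c'_w$ both satisfy (a)--(c), and show the difference $d = c_w - c'_w$ is zero. Writing
\[
d = \sum_{y < w} v^{-\ell(w)} Q_y(v^2)\, t_y
\]
with each $Q_y = P_{y,w} - P'_{y,w} \in \ZZ[q]$ of $q$-degree at most $(\ell(w) - \ell(y) - 1)/2$, each $t_y$-coefficient of $d$ has $v$-degree at most $-\ell(y) - 1 \leq -1$, so lies in $v^{-1}\ZZ[v^{-1}]$. A standard lemma then forces $d = 0$: if $y_0$ were Bruhat-maximal among indices with nonzero coefficient $a_{y_0}$, then since $\overline{t_y} - t_y$ is an $\cA$-combination of $t_{y'}$ with $y' < y$, comparing the $t_{y_0}$-coefficient of $\overline{d} = d$ yields $\overline{a_{y_0}} = a_{y_0}$, and combined with $a_{y_0} \in v^{-1}\ZZ[v^{-1}]$ this gives $a_{y_0} = 0$, a contradiction.

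For existence, I take $c_e = t_e$ as the base case. For $\ell(w) \geq 1$, I choose $s \in S$ with $\ell(sw) < \ell(w)$ and invoke the inductive hypothesis to obtain $c_{sw}$. First, I verify that $c_s := v^{-1}(t_s + t_e)$ is bar-invariant, using the identity $\overline{t_s} = t_s^{-1} = v^{-2} t_s + (v^{-2} - 1) t_e$; consequently $c_s \cdot c_{sw}$ is bar-invariant and expands as $v^{-\ell(w)} \sum_y R_y\, t_y$ with $R_w = 1$ and $R_y \in \ZZ[q]$. I then set
\[
c_w = c_s \cdot c_{sw} - \sum_{\substack{z < sw \\ sz < z}} \mu(z, sw)\, c_z,
\]
where $\mu(z, sw)$ denotes the coefficient of $q^{(\ell(sw) - \ell(z) - 1)/2}$ in $P_{z, sw}$ (interpreted as $0$ when this exponent fails to be a nonnegative integer). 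Each summand is bar-invariant and strictly Bruhat-below $w$, so $c_w$ is bar-invariant with $t_w$-coefficient $v^{-\ell(w)}$ and lower-triangular with respect to Bruhat order, yielding (a) and (b).

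The main technical step is verifying the degree bound (c) after subtracting the $\mu$-terms. This requires computing each $R_y$ via the multiplication rule for $t_s t_z$, which splits into the two cases $sz > z$ and $sz < z$ and expresses $R_y$ as an explicit combination of $P_{y,sw}$ and $P_{sy,sw}$ with coefficients in $\{1, q, q-1\}$; isolating the top-degree contribution shows that the $\mu$-corrections cancel precisely the portion of $R_y$ that would exceed the allowed $q$-degree. Uniqueness, already proved, ensures that $c_w$ is independent of the choice of $s$, and the polynomials $P_{y,w}$ read off from $c_w$ lie in $\ZZ[q]$ by the same inductive bookkeeping. The principal obstacle is this degree-bound verification: it is conceptually transparent but demands a careful case analysis that feeds the inductive hypothesis on both $P_{y,sw}$ and $P_{sy,sw}$ into the multiplication formula and tracks the cancellation produced by the $\mu$ subtraction.
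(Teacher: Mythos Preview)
Your outline is correct and is precisely the classical Kazhdan--Lusztig argument. Note, however, that the paper does not actually prove this theorem-definition: it is quoted from \cite{KL} as a foundational result, with the reader referred to standard sources such as \cite{CCG,Hu,Lu} for details. So there is no ``paper's own proof'' to compare against; your proposal simply supplies the proof that the paper omits, and it matches the original approach in \cite{KL}.
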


\begin{remark} Here and elsewhere,  the Kronecker delta $\delta_{y,w}$ has the usual meaning of   $\delta_{y,w} =1$ if $y=w$ and $\delta_{y,w} = 0$ otherwise. 
\end{remark}

The polynomials $(P_{y,w})_{y,w \in W}$ are the \emph{Kazhdan-Lusztig polynomials} of the Coxeter system $(W,S)$. Property (b) implies that the  
elements $\( c_w \)_{ w \in W}$ form an $\cA$-basis for  $\cH_q$, which one calls the \emph{Kazhdan-Lusztig basis}. 
For more information on the Kazhdan-Lusztig polynomials and methods of computing them, see, for example,  \cite[Chapter 7]{Hu} or \cite[Chapter 5]{CCG}.

\subsection{``Twisted Kazhdan-Lusztig theory''}\label{twisted-intro}

We now present Lusztig's definition of the module $\cM_{q^2}$ and the polynomials $\Psig_{y,w}$  mentioned at the start of this introduction.
To begin, 
 %
we let $\cH_{q^2}$ denote the \emph{Hecke algebra of $(W,S)$ with parameter $q^2$}: this is the free $\cA$-module with basis $\{ T_w : w \in W\}$, equipped with the unique $\cA$-algebra structure  with respect to which the multiplication rule
\[ T_s T_w = \begin{cases} T_{sw} & \text{if }\ell(sw) = \ell(w) +1 \\ q^2 T_{sw} + (q^2-1) T_w &\text{if }\ell(sw) = \ell(w)-1 \end{cases}
\] holds 
for each $s \in S$ and $w \in W$.
Like $\cH_q$, this algebra  possesses a unique ring involution  $\overline{\ } : \cH_{q^2} \to \cH_{q^2}$ with 
$\overline{v^n} = v^{-n}$ and $\overline {T_w} = \(T_{w^{-1}}\)^{-1}$
for all $n \in \ZZ$ and $w \in W$. This bar operator fixes each of the elements
 \[C_w \omdef = q^{-\ell(w)} \cdot \sum_{y \in W} P_{y,w}(q^2) \cdot T_y\qquad\text{for }w \in W.\] The elements  $\( C_w \)_{ w \in W}$ form an $\cA$-basis of $\cH_{q^2}$ which one refers to as the \emph{Kazhdan-Lusztig basis}.
The use of the capitalized symbols $T_w$, $C_w$ is intended to distinguish elements of $\cH_{q^2}$ from the basis elements $t_w$, $c_w$ of the usual Hecke algebra $\cH_q$.

The following Theorem-Definition of Lusztig \cite{LV2}  defines 
 $\cM_{q^2}$ explicitly as a certain module of the algebra $\cH_{q^2}$.
This statement requires a few additional ingredients:
\begin{itemize}

\item Fix an automorphism $w \mapsto w^*$ of $W$ with order $\leq 2$ such that $s^* \in S$ for each $s \in S$. 

\item Set $\I = \{ w \in W : w^* = w^{-1}\}$. One calls elements of this set \emph{twisted involutions}.



\item Given $s \in S$ and $w \in \I$, let $s \act w$ denote the unique element in the intersection of $\{ sw,sws^*\}$ and $\I\setminus \{w\}$. Note that while $s \act (s \act w) = w$, the operation $\act : S \times \I\to \I$ generally does not extend to a group action of $W$ on $\I$.

\end{itemize}
We now have Lusztig's result.  
This statement first appeared in Lusztig and Vogan's paper \cite{LV} in the special case that $W$ is a Weyl group or affine Weyl group and $*$ is trivial. 

\begin{thmdef}[Lusztig and Vogan \cite{LV}; Lusztig \cite{LV2}]\label{lv-thmdef}
Let $\M$ be the free $\cA$-module with basis $\{ \a_w : w \in \I\}$.  
\ben
\item[(a)]  $\M$ has a unique $\cH_{q^2}$-module structure with respect to which the following multiplication rule
holds
for each  $s \in S$ and $w \in \I$:
\be\label{module-def} T_s \a_w =
\left\{
\ba 
a_{s\act w}	\ {\color{white}+}\ &			&&\qquad\text{if $s\act w =sws^* > w$} \\
(q+1)a_{s\act w} \ +\ & qa_w			&&\qquad\text{if $s\act w =sw > w$} \\
(q^2-q)a_{s\act w} \ +\ &(q^2-q-1) a_w 	&&\qquad\text{if $s\act w =sw < w$} \\
q^2a_{s\act w} \ +\  & (q^2-1)a_w 		&&\qquad\text{if $s\act w =sws^* < w$.}
\ea
\right.
\ee

\item[(b)] There is a unique $\ZZ$-linear involution $\overline{\ } : \M \to \M$ such that $\overline{a_1} = a_1$ and $\overline{h\cdot m} = \overline h \cdot \overline m$ for all $h \in \cH_{q^2}$ and $m \in \cM_{q^2}$.
This bar operator acts on the standard basis of $\cM_{q^2}$ by the  formula $\overline{\a_w} =(-1)^{\ell(w)}\cdot  (T_{w^{-1}})^{-1}\cdot \a_{w^{-1}}$ for $w \in \I$. 

\een
\end{thmdef}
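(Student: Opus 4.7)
In (a), the algebra $\cH_{q^2}$ is generated over $\cA$ by $\{T_s : s \in S\}$, so prescribing $T_s \cdot a_w$ on each basis element $a_w$ determines a compatible module structure uniquely, if one exists. For (b), I would argue by induction on $\ell(w)$: given a $\ZZ$-linear involution $\Phi$ satisfying the stated conditions, the base case $\Phi(a_1) = a_1$ is prescribed, and for $w \in \I$ with $\ell(w) > 0$ one chooses $s \in S$ with $\ell(s \act w) < \ell(w)$, so that the intertwining relation $\Phi(T_s \cdot a_{s \act w}) = \overline{T_s} \cdot \Phi(a_{s \act w})$ expresses $\Phi(a_w)$ uniquely in terms of $\Phi(a_{s \act w})$; this uses that the coefficient of $a_w$ in the applicable case of (\ref{module-def}) is a nonzero element of $\cA$ and that $\cM$ is $\cA$-torsion-free.

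\textbf{Existence in (a), and the main obstacle.} I would define operators $T_s : \cM \to \cM$ by (\ref{module-def}) and verify the two defining families of relations for $\cH_{q^2}$ on each basis vector $a_w$: the quadratic relation $T_s^2 \cdot a_w = (q^2 - 1) T_s \cdot a_w + q^2 \cdot a_w$ and, for distinct $s, t \in S$ with $m(s,t) < \infty$, the braid relation $\underbrace{T_s T_t T_s \cdots}_{m(s,t)} \cdot a_w = \underbrace{T_t T_s T_t \cdots}_{m(s,t)} \cdot a_w$. The quadratic relation reduces to a direct four-case computation: the involution $w \leftrightarrow s \act w$ pairs cases $1 \leftrightarrow 4$ and $2 \leftrightarrow 3$ of (\ref{module-def}), and in each matched pair applying $T_s$ twice to $a_w$ produces a combination of $a_w$ and $a_{s \act w}$ whose coefficients match those of $(q^2-1) T_s \cdot a_w + q^2 \cdot a_w$ on inspection. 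The braid relation is the \emph{main obstacle}: one iterates (\ref{module-def}) up to $m(s,t)$ times on each side and tracks which of the four sub-cases is triggered at each step. The combinatorial patterns are sharply constrained because each step changes length by $\pm 1$ and its type ($sw$ versus $sws^*$) is governed by how $*$ interacts with the word on the right of $w$; the cleanest organization passes to the $*$-stable parabolic subgroup of $W$ generated by $\{s, t, s^*, t^*\}$, in which the action of $T_s$ and $T_t$ factors through a bounded-rank submodule, reducing the braid identity to a finite verification.

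\textbf{Part (b).} Given (a), I would \emph{define} $\Phi : \cM \to \cM$ as the $\ZZ$-linear map with $\Phi(v^n \cdot a_w) := v^{-n} \cdot (-1)^{\ell(w)} \cdot (T_{w^{-1}})^{-1} \cdot a_{w^{-1}}$, and check three things: (i) $\Phi(a_1) = a_1$, which is immediate; (ii) $\Phi(T_s \cdot a_w) = \overline{T_s} \cdot \Phi(a_w)$ for all $s \in S$ and $w \in \I$, a four-case computation parallel to (\ref{module-def}) using $\overline{T_s} = T_s^{-1} = q^{-2}(T_s - (q^2 - 1))$; and (iii) $\Phi^2 = \mathrm{id}$, which given (ii) is a one-line direct calculation exploiting $\overline{(T_{w^{-1}})^{-1}} = T_w$ and $\ell(w) = \ell(w^{-1})$. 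The uniqueness established in the first paragraph then identifies $\Phi$ with the bar operator, yielding both its existence and the stated closed-form expression.
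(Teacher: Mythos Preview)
The paper does not actually prove this statement: Theorem-Definition~\ref{lv-thmdef} is quoted from Lusztig--Vogan \cite{LV} and Lusztig \cite{LV2} as a foundational result, and no argument for it is given anywhere in the present paper. So there is no ``paper's own proof'' to compare against.

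That said, your outline is broadly in line with how Lusztig proceeds in \cite{LV2}, and the uniqueness arguments, the quadratic-relation check in (a), and the verification of (b) via the explicit formula are all sound as written. The one place where your sketch understates the difficulty is the braid relation. Your phrase ``the action of $T_s$ and $T_t$ factors through a bounded-rank submodule'' is not literally true: iterating $s\ltimes-$ and $t\ltimes-$ on a fixed $w\in\I$ does not in general stay inside a finite set, and the $\cA$-span of the resulting basis vectors is not an $\cH_{q^2}$-submodule in any obvious sense. What Lusztig actually does in \cite{LV2} is more delicate: he reduces to the $*$-stable parabolic subgroup generated by $\{s,t,s^*,t^*\}$ (rank at most $4$) by a coset argument, and then carries out a substantial case analysis there (his Sections~2--7). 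So your reduction step, while pointing at the right object, hides the genuine content of the proof; the ``finite verification'' you allude to is the bulk of \cite{LV2}.
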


The bar operator just introduced   on  $\cM_{q^2}$  gives rise, in turn, to the  following analogue of Theorem-Definition \ref{kl-thmdef}. Like the previous result, this was first shown by Lusztig and Vogan \cite{LV} in the crystallographic case (with $*$ trivial). Lusztig \cite{LV2} subsequently extended the statement to all Coxeter systems.

\begin{thmdef}[Lusztig and Vogan \cite{LV}; Lusztig \cite{LV2}] \label{twisted-thmdef} For each $w \in \I$ there is a unique family of polynomials $\( \Psig_{y,w} \)_{y \in \I} \subset \ZZ[q]$ 
with the following three properties:
\ben
\item[(a)] The element $ 
A_w \omdef=  v^{-\ell(w)} \cdot \sum_{y \in \I}   \Psig_{y,w}\cdot a_y$ in $ \M$
has $\overline{A_w} = A_w$.
\item[(b)] $\Psig_{y,w} = \delta_{y,w}$ if $y \not < w$ in the Bruhat order.
\item[(c)] $\Psig_{y,w}$ has degree at most $ \frac{1}{2} \( \ell(w)-\ell(y)-1\)$  as a polynomial in $q$ whenever $y < w$.
\een
\end{thmdef}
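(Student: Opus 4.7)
The strategy mirrors the standard proof of Theorem-Definition \ref{kl-thmdef}, adapted to the twisted setting.

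\emph{Bar-operator triangularity.} I first show that for each $y \in \I$,
\[\overline{a_y} \;=\; r_{y,y}\, a_y + \sum_{\substack{z \in \I \\ z < y}} r_{z,y}\, a_z, \qquad r_{z,y} \in \cA,\]
with $r_{y,y}$ a unit of $\cA$ concentrated in $v$-degree $-2\ell(y)$. This is proved by induction on $\ell(y)$ starting from the explicit formula $\overline{a_y} = (-1)^{\ell(y)}(T_{y^{-1}})^{-1} a_{y^{-1}}$ of Theorem-Definition \ref{lv-thmdef}(b), using the classical Bruhat-triangular expansion of $(T_{y^{-1}})^{-1}$ in $\cH_{q^2}$ together with repeated application of \eqref{module-def} (noting $y^{-1} = y^* \in \I$).

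\emph{Uniqueness.} Suppose $A_w$ and $A'_w$ both satisfy (a)--(c). Their difference is a bar-invariant sum $\sum_{y \in \I,\, y<w} q_y\, a_y$ in which, by (b) and (c), each $q_y$ is supported in $v$-degrees at most $-\ell(y) - 1$. Choose $y_0$ of maximal length with $q_{y_0} \ne 0$. Comparing coefficients of $a_{y_0}$ in the identity $\overline{\sum q_y a_y} = \sum q_y a_y$ and using triangularity gives $q_{y_0} = r_{y_0, y_0}\, \overline{q_{y_0}}$. The right-hand side is supported in $v$-degrees $\ge -\ell(y_0) + 1$, while the left is supported in $v$-degrees $\le -\ell(y_0) - 1$, so both vanish, contradicting the choice of $y_0$.

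\emph{Existence.} I induct on $\ell(w)$, setting $A_1 := a_1$. For $\ell(w) \ge 1$, choose $s \in S$ with $s \act w < w$, let $w' := s \act w$, and consider the bar-invariant element $C_s \cdot A_{w'}$, where $C_s := v^{-2}(T_s + 1) \in \cH_{q^2}$ is bar-invariant. Expanding via \eqref{module-def} yields an element whose standard-basis coefficients are explicit Laurent polynomials; from this element I extract $A_w$ by subtracting a bar-invariant $\cA$-combination of the previously constructed $A_y$ ($y \in \I$, $y < w$) whose purpose is to correct the coefficients of $a_y$ so that they lie in $v^{-1}\ZZ[v^{-1}]$ (equivalently, so that each resulting $P^\sigma_{y,w} \in \ZZ[q]$ satisfies the degree bound (c)). Uniqueness ensures the construction is independent of all choices.

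\emph{Main obstacle.} The chief technical point is the four-way case distinction in \eqref{module-def}. When $s \act w' = sw's^* > w'$, the expansion of $C_s A_{w'}$ begins with $v^{-\ell(w)} a_w$ just as in the classical Kazhdan--Lusztig recursion; but in the ``commuting'' branch $s \act w' = sw' > w'$, a direct computation shows that the coefficient of $a_w$ in $C_s A_{w'}$ instead equals $(v + v^{-1}) \cdot v^{-\ell(w)}$, so the extraction of $A_w$ requires a more delicate subtraction of lower $A_y$'s. Verifying that these corrections can be made uniformly across all four cases, and that they leave coefficients of $a_y$ satisfying both (b) and (c), is the technical heart of the argument and the step that genuinely differs from ordinary Kazhdan--Lusztig theory.
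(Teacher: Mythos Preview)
The paper does not itself prove Theorem-Definition~\ref{twisted-thmdef}; it is quoted from \cite{LV2} (and \cite{LV} in the crystallographic case). Judging your sketch on its own merits, the triangularity and uniqueness steps are correct and standard.

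The gap is in your existence step. In the commuting branch you correctly compute that the coefficient of $a_w$ in $C_sA_{w'}$ is $(v+v^{-1})\,v^{-\ell(w)}$, but then say this ``requires a more delicate subtraction of lower $A_y$'s.'' No such subtraction can help: every $A_y$ with $y<w$ has zero $a_w$-component, so subtracting $\cA$-multiples of them leaves the $a_w$-coefficient unchanged at $(v+v^{-1})\,v^{-\ell(w)}$, never $v^{-\ell(w)}$. What is actually needed in this branch is that $C_sA_{w'}$, after removing suitable lower $A_y$'s, becomes \emph{divisible} by $v+v^{-1}$ in $\cM_{q^2}$. That divisibility is a genuine extra fact---essentially the second case of Theorem~\ref{mult-thm}---which in \cite{LV2} is established only \emph{after} the $A_w$ have been constructed, so it cannot be invoked here without circularity.

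The clean repair is to discard the $C_s$-recursion for existence and instead run the purely formal argument that your triangularity step already enables. Renormalising to $\tilde a_y:=v^{-\ell(y)}a_y$, your computation gives $r_{y,y}=v^{-2\ell(y)}$, hence the bar involution is \emph{unitriangular}: $\overline{\tilde a_w}=\tilde a_w+\sum_{y<w}\tilde r_{y,w}\,\tilde a_y$. Existence of a bar-invariant $A_w=\tilde a_w+\sum_{y<w}p_{y,w}\,\tilde a_y$ with each $p_{y,w}\in v^{-1}\ZZ[v^{-1}]$ then follows by the standard downward induction on $\ell(y)$ (as in \cite{KL} or \cite[Chapter~5]{Lu}): at each step one solves $p_{y,w}-\overline{p_{y,w}}=(\text{an already-determined bar-antiinvariant element of }\cA)$, which has a unique solution in $v^{-1}\ZZ[v^{-1}]$. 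This is the route Lusztig takes in \cite{LV2}; the multiplication formula of Theorem~\ref{mult-thm} is derived afterwards.
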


Note from (b) that the elements $\( A_w \)_{ w \in \I}$ form an $\cA$-basis for the module $\cM_{q^2}$.
We sometimes refer to this as the ``twisted Kazhdan-Lusztig basis.'' 
Likewise, we call the polynomials $\Psig_{y,w}$ the \emph{twisted Kazhdan-Lusztig polynomials} of the triple $(W,S,*)$. 
We will discuss some general properties of these polynomials  (and also address  how one computes them) in Section \ref{algo-sect} below. 

Before continuing to state the conjectures concerning $\Psig_{y,w}$ which are our main subject, let us mention a few reasons why one might care about these polynomials or the module $\cM_{q^2}$.
First, as detailed in \cite{LV}, when $W$ is a Weyl group or affine Weyl group, the module $\cM_{q^2}$  arises from geometric considerations and in that context the polynomials $\Psig_{y,w}$ are  expected to have importance in the theory of unitary representations of complex reductive groups.

While for more general Coxeter groups we lack such an interpretation for $\cM_{q^2}$,  
 there is nevertheless always a sense in which we can view the left regular representation of the Hecke algebra of a Coxeter system as 
a special case of (a submodule of)
the module $\cM_{q^2}$. Consequently,
one can realize the ordinary Kazhdan-Lusztig polynomials of one Coxeter system as the twisted polynomials $\Psig_{y,w}$ 
corresponding to another Coxeter system with a particular choice of $*$.
These considerations are explained in more precise detail in \cite{EM2}.

We also mention that when $W$ is finite, the irreducible decomposition of $\cM_{q^2}$ has a surprising interpretation in terms of the ``Fourier transform'' of a set of ``unipotent characters'' attached to $(W,S)$. This phenomenon, which is studied in various cases in the articles \cite{Ca,GM,Kottwitz,LV,EM}, gives  one more indication that  $\cM_{q^2}$ deserves consideration not only in the crystallographic case.

\subsection{Positivity conjectures}\label{conjecture-sect}

Many results in the theory of Hecke algebras depend on positivity properties of the Kazhdan-Lusztig polynomials $P_{y,w}$. In particular, we recall the following much studied conjectures:

\begin{conjecture}{A}\label{A}
The polynomials $P_{y,w}$ have nonnegative integer coefficients. 
\end{conjecture}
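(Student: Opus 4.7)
The plan is to prove Conjecture A in the universal case by deriving an explicit combinatorial expression for each $P_{y,w}$, extending Dyer's \cite{Dyer} calculation of the Kazhdan-Lusztig polynomials of universal Coxeter groups. For arbitrary Coxeter systems Conjecture A is a deep theorem of Elias and Williamson \cite{EWpaper} whose proof requires Soergel bimodules; in the universal setting, the absence of braid relations permits a direct combinatorial attack that stays within the framework of the Hecke algebra $\cH_q$.

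First, I would exploit the defining feature of a universal Coxeter system: every element $w \in W$ has a unique reduced expression $w = s_{i_1} \cdots s_{i_k}$, so that the Bruhat order on $W$ coincides with the subword order on these reduced words. The interval $[1,w]$ is therefore parametrized by a subset of the power set of $\{1,\dots,k\}$, and the entire computation can be phrased combinatorially in terms of subwords.

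Next, I would proceed by induction on $\ell(w)$. Fix $s \in S$ with $sw < w$ and set $u = sw$. The bar-invariance characterization in Theorem-Definition \ref{kl-thmdef} reduces the construction of $c_w$ to the familiar identity $c_s c_u = c_w + \sum_{z < u,\, sz < z} \mu(z,u)\, c_z$, which expresses the polynomials $P_{y,w}$ recursively in terms of the $P_{y,u}$, the $P_{y,z}$, and the Kazhdan-Lusztig $\mu$-function. In the universal case, I expect the $\mu$-values to be sharply controlled: roughly, $\mu(z,u)$ should be non-zero only when the reduced word for $z$ arises from that of $u$ by removing a single letter satisfying a descent condition, and in that case $\mu(z,u) = 1$.

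Granting this control on $\mu$, I would extract a closed-form formula presenting each coefficient of $P_{y,w}$ as the cardinality of an explicit combinatorial set (for instance, certain sequences of alternating deletions in the reduced word for $w$, in the spirit of Dyer's Fibonacci-type expressions), from which non-negativity is manifest. The main obstacle is establishing the sharp form of the $\mu$-function: this requires tightly tracking the degree bound in Theorem-Definition \ref{kl-thmdef}(c) through the recursion and ruling out the cancellations that could in principle produce negative contributions. Once this $\mu$-structure is secured, the closed-form description for $P_{y,w}$ and the positivity of its coefficients follow from a clean induction on $\ell(w)$.
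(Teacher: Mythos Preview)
Your plan is sound and essentially reproduces Dyer's original argument in \cite{Dyer}: in the universal case the standard recursion $c_s c_u = c_w + \sum_{sz<z} \mu(z,u)\,c_z$ collapses because at most one $z$ contributes (this is precisely Lemma~\ref{dyer-lem}, i.e.\ \cite[Lemma~3.5]{Dyer}), and iterating yields a closed formula \cite[Theorem~3.8]{Dyer} from which positivity is manifest. Your description of the $\mu$-control is accurate, and the ``main obstacle'' you identify---the minus sign in $P_{y,w} = P_{y,sw} + qP_{sy,sw} - \delta\cdot qP_{y,rsw}$---is exactly the point where one needs the explicit combinatorial description to see that no negative coefficients survive.

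The paper takes a slightly different route. Rather than aiming for a closed formula for $P_{y,w}$, it works with the \emph{differences} $P_{y,z;w} = P_{y,w} - P_{z,w}$ for $y\leq z$, and iterates Lemma~\ref{dyer-lem} to obtain the recurrence $P_{y,z;w} = P_{y,z;sw} + q^k P_{ay,az;aw}$ (Proposition~\ref{dyer-cor2}), whose right-hand side has only \emph{positive} coefficients. Induction on $\ell(w)$ then gives Conjecture~\descref{B} directly (Theorem~\ref{dyer-thm1}), and Conjecture~\descref{A} follows as the special case $z=w$. The advantage of the paper's route is that it proves the stronger monotonicity statement~\descref{B} with no extra work and avoids ever writing down the closed formula; the advantage of your route is that it yields an explicit combinatorial interpretation of each coefficient. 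Both arguments rest on the same key input, namely the collapse of the $\mu$-sum in Lemma~\ref{dyer-lem}.
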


\begin{conjecture}{B}\label{B}
 The   polynomials $P_{y,w}$ are decreasing for fixed $w$, in the sense that the difference $P_{y,w} - P_{z,w}$ has nonnegative integer coefficients whenever $y \leq z$.
\end{conjecture}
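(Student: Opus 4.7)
The plan is to establish Conjecture B in the universal setting, which is the case this paper treats. My strategy has three parts: reduce to Bruhat covers, invoke an explicit formula for $P_{y,w}$ available under the universality hypothesis, and compare the resulting expressions coefficient-by-coefficient.

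First, I would reduce to the case where $z$ covers $y$ in Bruhat order. Any pair $y \leq z$ admits a saturated chain $y = y_0 \lessdot y_1 \lessdot \cdots \lessdot y_m = z$, and the telescoping identity
\[
P_{y,w} - P_{z,w} = \sum_{i=0}^{m-1} \bigl(P_{y_i,w} - P_{y_{i+1},w}\bigr)
\]
writes the desired difference as a sum of cover differences. If each cover difference has nonnegative coefficients, so does the whole sum.

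Next, I would exploit the fact that in a universal Coxeter system every element has a unique reduced expression: if $w = s_1 s_2 \cdots s_k$, then the Bruhat interval $[1,w]$ is in bijection with the reduced subwords of $s_1 s_2 \cdots s_k$, and a cover $y \lessdot z \leq w$ corresponds to inserting a single letter into the subword representing $y$. In this environment I would either invoke Dyer's \cite{Dyer} closed-form formula for $P_{y,w}$ or re-derive it from the standard Kazhdan-Lusztig recursion, which simplifies dramatically under universality because the $\mu$-coefficients $\mu(y',w')$ vanish outside of a very restricted set of pairs. Given such a formula, each coefficient of $P_{y,w}$ should be expressible as a nonnegative count of combinatorial configurations (e.g.\ subwords weighted by a defect statistic), and the inequality $P_{y,w} \geq P_{z,w}$ coefficient-wise would follow from an injection from configurations for $z$ into configurations for $y$ induced by the letter deletion that realizes the cover.

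The main obstacle is carrying out the last step rigorously: the Kazhdan-Lusztig recursion
\[
P_{y,w} = q^{1-c}P_{sy,sw} + q^{c} P_{y,sw} - \sum_{z'} \mu(z',sw)\, q^{(\ell(w)-\ell(z'))/2} P_{y,z'}
\]
(for $sw<w$ and $c\in\{0,1\}$ according to whether $sy<y$) contains subtractions, so positivity of the difference $P_{y,w}-P_{z,w}$ is not visible term-by-term from the recursion itself. The key will be to arrange the bookkeeping so that the surviving $\mu$-contributions (which are rare in the universal case) cancel or combine with the $P_{y,sw}$ terms in a manifestly positive way, and then apply induction on $\ell(w)$, using the cover reduction to run the inductive comparison only between $P_{y,w}$ and $P_{z,w}$ at adjacent ranks.
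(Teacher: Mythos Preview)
Your proposal is a strategy outline, not a proof: the two load-bearing steps (a usable closed formula for $P_{y,w}$ in the universal case, and an explicit injection of ``configurations for $z$'' into ``configurations for $y$'') are stated as intentions but never carried out, and you yourself flag the subtraction in the Kazhdan--Lusztig recursion as an unresolved obstacle. The cover reduction in your first step is valid but does not touch the hard part; it only repackages the problem.

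The paper's argument (Theorem~\ref{dyer-thm1}) is quite different and sidesteps exactly the difficulty you identify. Rather than computing $P_{y,w}$ and $P_{z,w}$ separately and comparing, the paper works directly with the difference $P_{y,z;w} := P_{y,w} - P_{z,w}$ and produces a recurrence for it with \emph{manifestly nonnegative} coefficients. The key step is Lemma~\ref{dyer-lem}, which in the universal case collapses the $\mu$-sum in the standard recursion to a single term, yielding
\[
P_{y,w} = P_{y,sw} + qP_{sy,sw} - \delta\cdot q\,P_{y,rsw}
\]
for suitable $r,s\in S$. Iterating this (Proposition~\ref{dyer-cor2}) and using $P_{y,w}=P_{sy,w}$ when $sw<w$ to normalise the indices, the negative term telescopes away and one obtains
\[
P_{y,z;w} = P_{y,z;sw} + q^k P_{ay,az;aw}
\]
with $\ell(sw),\ell(aw)<\ell(w)$ and $ay\leq az$. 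Induction on $\ell(w)$ then finishes immediately: no closed formula, no combinatorial injection, and the subtraction you were worried about has been absorbed into the telescoping before positivity is ever invoked.

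If you want to make your approach work, the honest route is to write down Dyer's explicit formula \cite[Theorem~3.8]{Dyer} and exhibit the injection concretely; but the paper's recurrence-for-the-difference method is both shorter and more robust, and it is the template used later for the harder twisted case (Propositions~\ref{p2}--\ref{p4}).
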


Denote the structure coefficients of $\cH_q$ in the Kazhdan-Lusztig basis by $\( h_{x,y;z} \)_{x,y,z \in W}$; i.e., these are the Laurent polynomials in $ \cA$ satisfying 
$c_x c_y = \sum_{z \in W} h_{x,y;z} c_z $ for $x,y,z \in W$.

\begin{conjecture}{C}\label{C}
The Laurent polynomials $h_{x,y;z}$ have nonnegative coefficients. 
\end{conjecture}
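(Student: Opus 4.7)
The plan is to establish Conjecture~\ref{C} in the setting of universal Coxeter systems, which is the scope of this paper. The crux of the strategy is that in a universal Coxeter system each group element has a unique reduced expression and the Bruhat order coincides with subword containment; these features allow the Kazhdan-Lusztig basis and its products to be analyzed combinatorially in a way not available for general $(W,S)$.

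First I would leverage an explicit formula, of the sort Dyer gave for $P_{y,w}$, expressing $c_w$ as an $\cA$-linear combination of the standard basis elements $t_y$ with transparently nonnegative Laurent polynomial coefficients. Because the nonzero $\mu$-coefficients in the universal case are $\{0,1\}$-valued and governed by a simple subword rule, the recursive identity $c_s c_w = c_{sw} + \sum_{z < w,\ sz < z} \mu(z,w)\, c_z$ (when $sw>w$) can be iterated to yield a closed-form expansion of $c_w$ with no cancellation. The base case $x\in S$ of Conjecture~\ref{C} is then immediate: the two branches $c_s c_y = c_{sy} + \sum_z \mu(z,y) c_z$ when $sy>y$ and $c_s c_y = (v+v^{-1}) c_y$ when $sy<y$ visibly give structure coefficients $h_{s,y;z}$ with nonnegative integer coefficients.

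For general $x$ I would proceed by induction on $\ell(x)$ using associativity: writing $x = s x'$ with $\ell(x') < \ell(x)$ so that $c_s c_{x'} = c_x + \sum_z \mu(z,x')\, c_z$, one obtains
\[ c_x c_y \ =\ c_s\bigl(c_{x'} c_y\bigr) \ -\ \sum_z \mu(z,x')\, c_z c_y. \]
By induction both $c_{x'} c_y$ and each $c_z c_y$ expand in the Kazhdan-Lusztig basis with nonnegative structure coefficients, and the base case controls $c_s$ applied to any such expansion, so the first summand is nonnegative. The hard part will be the minus sign: showing that no negative coefficient survives the subtraction requires matching, term by term, the negative contributions against positive contributions coming from the first summand. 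In the universal case I expect such a matching to be furnished by a bijection on pairs of subwords of the reduced expressions of $x$ and $y$, in the spirit of Dyer's subword-based proof of positivity of $P_{y,w}$ itself. Constructing this bijection is the principal obstacle; once in hand, the positivity of each $h_{x,y;z}$ follows at once, and one would then expect the same combinatorial machinery to feed into the proofs of the twisted analogues that are the paper's main concern.
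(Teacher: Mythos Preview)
Your inductive framework is the right one, and in the universal case the subtraction you worry about is far tamer than you suggest: because each element has a unique reduced word, the sum $\sum_{z:\,sz<z} \mu(z,x')\,c_z$ contains \emph{at most one} term. So no elaborate ``bijection on pairs of subwords'' is needed---just a single explicit cancellation. But you have not actually carried that cancellation out, and leaving it as ``I expect such a matching to be furnished'' is the gap in your argument: as written, nothing prevents the subtracted term $\mu(z,x')\,c_z c_y$ from overrunning the corresponding piece of $c_s(c_{x'}c_y)$.

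The paper's route (quoting Dyer) is different and cleaner: rather than proving positivity of $h_{x,y;z}$ abstractly, one first proves by the same induction an explicit closed form for the whole product $c_x c_y$. One defines elements $c(w,j)\in\cH_q$ recursively (Definition~\ref{dyerdef}); by construction each $c(w,j)$ is a nonnegative integer combination of Kazhdan--Lusztig basis elements. Theorem~\ref{c-structure} then asserts that $c_x c_y$ equals either $(v+v^{-1})\bigl(c_{xsy}+c(xsy,n)\bigr)$ or $c_{xy}+c(xy,n)+c(xy,n+1)$, where $n=\ell(x)$, according to whether the right descent of $x$ matches the left descent of $y$. The inductive verification of this formula uses exactly your relation, written on the right as $c_x=c_{xs}c_s-c_{x'}$ (see \eqref{noted}), and the single negative term $-c_{x'}c_y$ is seen to cancel against the summand produced by one unwinding of the recursion for $c(w,j)$. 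Positivity of every $h_{x,y;z}$ is then immediate from the shape of the formula. Your plan could be completed, but completing it essentially forces you to rediscover this closed form; the ``matching'' you anticipate \emph{is} the recursion in Definition~\ref{dyerdef}.
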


These conjectures have been proved in the case when $(W,S)$ is crystallographic (i.e., when $W$ a Weyl group or affine Weyl group), finite, or universal through the work of a number of people \cite{Alvis,Fokko,Dyer,Ir,KL2,Laff,Springer}. 
Elias and Williamson's recent proof of Soergel's conjecture \cite{EWpaper}, finally, establishes  Conjectures \descref{A} and \descref{C}  for any Coxeter system. In this generality Conjecture \descref{B} remains open.

The central topic of this work concerns ``twisted'' versions of the preceding conjectures. 
While the parallels between Theorem-Definitions \ref{kl-thmdef} and \ref{twisted-thmdef} suggest  obvious analogues of Conjectures \descref{A}, \descref{B}, and \descref{C} in the twisted case,  these statements turn out not  to be the right ones.
Notably, the polynomials $\Psig_{y,w}$ may have negative coefficients.  To state the ``correct'' conjectures, define $P^+_{y,w}, P^-_{y,w} \in \QQ[q]$ by 
\be \label{pm-def} P^\pm_{y,w} = \tfrac{1}{2} \( P_{y,w} \pm \Psig_{y,w}\)\qquad\text{for each }y,w \in \I.\ee Lusztig proves that these polynomials actually have integer coefficients  \cite[Theorem 9.10]{LV2} and  conjectures the following:

\begin{conjecture}{A$'$}\label{A$'$}
 The   polynomials $P_{y,w}^+$ and $P^-_{y,w}$ have nonnegative integer coefficients. 
\end{conjecture}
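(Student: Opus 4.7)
The plan is to reduce Conjecture~\descref{A$'$} in the universal case to an explicit combinatorial computation. In a universal Coxeter system every $w \in W$ has a unique reduced expression $w = s_1 s_2 \cdots s_\ell$, and Bruhat order is just subword order on these reduced expressions; consequently $\I$ consists exactly of the ``twisted palindromes,'' the elements whose reduced expression satisfies $s_{\ell+1-i} = s_i^*$ for all $i$. Dyer \cite{Dyer} already gives a closed formula for the ordinary polynomials $P_{y,w}$ in this setting, so the bulk of the work is to derive an analogous explicit formula for $\Psig_{y,w}$ and then to analyse $P^\pm_{y,w} = \tfrac{1}{2}(P_{y,w} \pm \Psig_{y,w})$ coefficient-by-coefficient.

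To construct $A_w$ inductively, I would induct on $\ell(w)$, starting from $A_1 = a_1$. For each $w \in \I$ with $\ell(w) \geq 1$ one chooses $s \in S$ such that $w' \omdef= s \act w$ satisfies $w' < w$; the first or second case of \eqref{module-def} (read for $w'$ in place of $w$) then gives an explicit formula for $T_s \hs a_{w'}$, which can be solved to express $a_w$ in terms of $T_s \hs a_{w'}$, $a_{w'}$, and at worst a factor of $(1+q)^{-1}$. Replacing $a_{w'}$ by $A_{w'}$ and adding bar-invariant correction terms supported on $\{a_y : y \in \I,\ y < w\}$ produces a bar-invariant candidate for $A_w$ with correct leading coefficient $v^{-\ell(w)} a_w$; the degree bound in Theorem-Definition~\ref{twisted-thmdef}(c), together with standard Kazhdan-Lusztig bookkeeping, pins down the corrections uniquely. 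Iterating the recursion should yield closed-form expressions for $\Psig_{y,w}$ indexed by the ways the reduced expression of $y$ embeds as a subword of that of $w$, with additional structure tracking the palindromic symmetry of $w$ and the $*$-orbits on $S$.

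The main obstacle is verifying nonnegativity of both $P^+_{y,w}$ and $P^-_{y,w}$. Divisibility of the coefficients by $2$ is automatic from \cite[Theorem 9.10]{LV2}, so only the sign of each coefficient is at issue. Guided by Dyer's formula, I expect $P_{y,w}$ to factor as a product of simple factors of the form $(1+q)^a q^b$, and I expect $\Psig_{y,w}$ to admit a parallel product expansion with sign variations that depend on whether each letter of $y$'s reduced word lies in the ``left half,'' the ``right half,'' or the ``central fixed point'' of the palindromic reduced word of $w$. The proof then reduces to pairing terms in the expansion of $P_{y,w} \pm \Psig_{y,w}$ along this palindromic splitting and checking manifestly that each paired combination is a nonnegative element of $\ZZ[q]$. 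The hardest technical point is likely managing the interplay between the two distinct ``descent'' cases of \eqref{module-def} (where $s \act w$ equals $sw$ versus $sws^*$), which contribute differently to the recursion and therefore to the coefficient patterns in $\Psig_{y,w}$; before the positivity of $P^\pm_{y,w}$ becomes visible, these contributions must be consolidated carefully.
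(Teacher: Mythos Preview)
Your plan diverges substantially from what the paper does, and as written it has a real gap.

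\textbf{How the paper proceeds.} The paper never writes down a closed formula for $\Psig_{y,w}$ at all. Instead it works with the differences $P_{y,z;w} = P_{y,w}-P_{z,w}$ and $\Psig_{y,z;w} = \Psig_{y,w}-\Psig_{z,w}$ for $y\leq z$, and proves four case-by-case recurrences (Propositions~\ref{p1}--\ref{p4}) in which the identity for $\Psig_{y,z;w}$ (part (a)) and the identity for $P_{y,z;w}$ (part (b)) have parallel shapes. The key structural observation is that in each proposition, subtracting (a) from (b) leaves only nonnegative multiples of polynomials of the form $P_{\bullet,\bullet;\bullet}$ (already known to lie in $\NN[q]$ by Theorem~\ref{dyer-thm1}) together with smaller instances of $P_{\bullet,\bullet;\bullet}-\Psig_{\bullet,\bullet;\bullet}$. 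Induction on $\rho(w)$ then gives $P_{y,z;w}-\Psig_{y,z;w}\in\NN[q]$, which yields Conjecture~\descref{B$'$} and hence Conjecture~\descref{A$'$}.

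\textbf{The gap in your proposal.} Your argument rests entirely on the expectation that $\Psig_{y,w}$ admits a ``parallel product expansion'' to Dyer's formula for $P_{y,w}$, with signs controlled by the palindromic position of each letter. But the twisted recursion is not a clean analogue of Dyer's: Lemma~\ref{dyer-an} contains an extra correction term $\delta'\cdot q\bigl(\Psig_{1,s\act w}-\Psig_{s,s\act w}\bigr)$ that fires precisely when $y=1$, $s=s^*$, and $w\neq srs$. This term has no counterpart in Lemma~\ref{dyer-lem}, and it is the reason the paper must split into four separate propositions with increasingly elaborate auxiliary sequences $u_i$, $z_i$. So the hoped-for product formula is unlikely to exist in the simple form you describe, and the step ``pair terms in the expansion of $P_{y,w}\pm\Psig_{y,w}$ along the palindromic splitting and check manifestly that each paired combination is nonnegative'' is not supported by any actual computation. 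Until you either produce the explicit formula for $\Psig_{y,w}$ or replace this step with something else, the proposal is a hope rather than a proof.

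\textbf{What each approach buys.} Your route, if it could be carried out, would give explicit closed forms for $P^\pm_{y,w}$, which is stronger information than the paper extracts. The paper's route sidesteps closed forms entirely: by comparing recurrences rather than formulas, it gets positivity of the differences (Conjecture~\descref{B$'$}) essentially for free once the parallel structure of Propositions~\ref{p1}--\ref{p4} is in place, and Conjecture~\descref{A$'$} drops out as a corollary.
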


This statement is a refinement of Conjecture \descref{A} since $P^+_{y,w} + P^-_{y,w} = P_{y,w}$ for $y,w \in \I$.
We  introduce the following stronger conjecture, which is likewise a refinement of Conjecture \descref{B}.

\begin{conjecture}{B$'$}\label{B$'$} The   polynomials $P^\pm_{y,w}$ are decreasing for fixed $w$, in the sense that the differences
$P^+_{y,w} - P^+_{z,w}$ and  $P^-_{y,w} - P^-_{z,w}$ have nonnegative integer coefficients whenever $y \leq z$.
\end{conjecture}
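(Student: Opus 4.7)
The plan is to exploit the fact that in a universal Coxeter system every element has a unique reduced expression, so that both the Bruhat order and the Hecke-algebra computations become combinatorially transparent, and then to extract explicit closed forms for $\Psig_{y,w}$ that can be compared directly with Dyer's known formulas for $P_{y,w}$. Since the Bruhat order on such a $W$ is precisely the subword order on the canonical reduced words, one should be able to describe $P^\pm_{y,w}$ as an explicit sum indexed by certain subwords, at which point Conjecture \descref{B$'$} becomes a coefficient-wise comparison on these sums.

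The first main step is to analyze twisted involutions concretely in the universal case: choose canonical reduced ``$\I$-expressions'' for each $w \in \I$, describe the restriction of the Bruhat order to $\I$, and track how the operations $s \act w$ of Theorem-Definition \ref{lv-thmdef} modify such expressions. Without braid relations the structure of these expressions is very rigid, so a clean normal form should be available. The second main step is an induction on $\ell(w)$ that computes $A_w$ directly: given $w \in \I$ and $s \in S$ with $s \act w > w$, one expresses $A_{s\act w}$ as $C_s \cdot A_w$ minus an explicit bar-invariant lower-degree correction chosen to enforce properties (a)--(c) of Theorem-Definition \ref{twisted-thmdef}, where the correction splits into the four subcases dictated by (\ref{module-def}). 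Unwinding this recursion produces an explicit polynomial formula for $\Psig_{y,w}$, and combined with Dyer's formula for $P_{y,w}$ one obtains closed forms for $P^\pm_{y,w}$; Lusztig's integrality statement \cite[Theorem 9.10]{LV2} serves as a sanity check that these remain in $\ZZ[q]$.

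With these closed forms in hand, I would attack Conjecture \descref{B$'$} by reducing to Bruhat covers $y \lessdot z$ inside $\I \cap [1,w]$ and showing directly that $P^\pm_{y,w} - P^\pm_{z,w}$ is a manifestly nonnegative combination of monomials in $q$, which by transitivity yields the general inequality. The main obstacle I anticipate is the case analysis itself: the four branches of (\ref{module-def}) distinguish between ``straight'' steps with $s \act w = sw$ and ``twisted'' steps with $s \act w = sws^*$, and any clean formula for $\Psig_{y,w}$ must record which type of step appears at each position of the chosen $\I$-expression. Aligning this data with the way Bruhat covers inside $\I$ act on subwords, so that the positivity of $P^\pm_{y,w} - P^\pm_{z,w}$ becomes visible uniformly across all cover types, is where I expect the technical heart of the argument to lie.
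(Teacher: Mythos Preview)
Your plan differs substantially from the paper's approach and, while not wrong in spirit, risks becoming unnecessarily complicated. The paper never derives closed formulas for $\Psig_{y,w}$; instead it works throughout with the \emph{difference} polynomials $\Psig_{y,z;w} = \Psig_{y,w} - \Psig_{z,w}$ and $P_{y,z;w} = P_{y,w} - P_{z,w}$ directly. The key technical content (Lemma \ref{dyer-an} and Propositions \ref{p1}--\ref{p4}) consists of paired recurrences for these differences, arranged so that $P_{y,z;w} - \Psig_{y,z;w}$ is expressed inductively in terms of smaller such differences together with ordinary $P_{y',z';w'}$'s, all with coefficients in $\NN[q]$. Positivity of $P_{y,z;w} - \Psig_{y,z;w}$ then follows by induction on $\rho(w)$, and combined with Theorems \ref{dyer-thm1} and \ref{atypical-thm} and Proposition \ref{p-parity} this yields Conjecture \descref{B$'$} (Theorem \ref{typical-thm}).

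The advantage of the paper's route is that it sidesteps any explicit formula for $\Psig_{y,w}$: the four branches of (\ref{module-def}) are handled by Lemma \ref{msig-lem}, which pins down $m^\sigma(y \xrightarrow{s} w)$ completely in the universal case, and the resulting recursion (Lemma \ref{dyer-an}) already matches the shape of Dyer's Lemma \ref{dyer-lem} closely enough that one can run a parallel induction on the differences. Your strategy of first extracting closed forms and then comparing monomial by monomial is likely to hit exactly the case explosion you anticipate, because the ``straight'' versus ``twisted'' distinction at each step of an $\I$-expression does not align cleanly with the subword combinatorics governing $P_{y,w}$; the paper's Propositions \ref{p3} and \ref{p4} (the cases $y=1$, $s=s^*$) already require auxiliary sequences $u_i$, $z_i$ just to make the two recurrences line up, and extracting this alignment from a closed-form comparison would be at least as intricate. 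In short, your outline is a plausible route, but the paper shows that focusing on the differences rather than the individual polynomials is the cleaner organizing principle.
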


Finally, to provide an analog of Conjecture \descref{C}, for each $x \in W$ and $y \in \I$ define $\bigl( \wt h_{x,y;z}\bigr)_{z \in W}$ and $ \bigl(h^\sigma_{x,y;z}\bigr)_{z \in \I} $ as the Laurent polynomials in $\cA$ satisfying
\be\label{h-def} c_x c_y c_{{x^*}^{-1}} = \sum_{z \in W} \wt h_{x,y;z} c_z
\qquand
C_x \A_y = \sum_{z \in\I} h^\sigma_{x,y;z}\A_z.
\ee
Note  that $c_x,c_y,c_z \in \cH_q$  while $C_x \in \cH_{q^2}$ and $A_y \in \cM_{q^2}$. Now, 
 define $h^+_{x,y;z}, h^-_{x,y;z} \in \QQ[v,v^{-1}]$ by \be\label{hpm-def}
 h^\pm_{x,y;z}  = \tfrac{1}{2}\( \wt h_{x,y;z} \pm  h^\sigma_{x,y;z}\)
  \qquad\text{for each $x \in W$ and $y,z \in \I$.}
 \ee One can show from results of Lusztig \cite{LV2} that these Laurent polynomials likewise have integer coefficients (see Proposition \ref{h-parity} below), which leads to this conjecture.

\begin{conjecture}{C$'$}\label{C$'$}
The Laurent polynomials $h^+_{x,y;z}$ and $h^-_{x,y;z}$ have nonnegative integer coefficients.
\end{conjecture}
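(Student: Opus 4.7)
The plan is to establish Conjecture C$'$ in the universal case by combining explicit formulas for the Kazhdan-Lusztig and twisted Kazhdan-Lusztig polynomials with a direct computation of the structure coefficients in both $\cH_q$ and $\cM_{q^2}$.

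First, assuming that explicit formulas for $P_{y,w}$ (due to Dyer \cite{Dyer}) and for $\Psig_{y,w}$ (to be derived earlier in the paper for universal $(W,S)$) are in hand, I would write down the Kazhdan-Lusztig basis elements $c_x \in \cH_q$, $C_x \in \cH_{q^2}$, and $A_y \in \cM_{q^2}$ explicitly as sums over subwords of reduced expressions. In a universal Coxeter system every element has a unique reduced expression, so these expansions are indexed by transparent combinatorial data, and the coefficients appearing when $c_x, C_x, A_y$ are written against the standard bases $t_w$, $T_w$, $a_w$ are manifestly positive.

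Second, I would compute the products $c_x c_y c_{(x^*)^{-1}}$ in $\cH_q$ and $C_x A_y$ in $\cM_{q^2}$ directly, using the defining multiplication relations of $\cH_q$ and the module rule \eqref{module-def} respectively. This should yield explicit expressions for $\wt h_{x,y;z}$ and $h^\sigma_{x,y;z}$ that one can then combine into $h^\pm_{x,y;z}$. The aim is to interpret each $h^\pm_{x,y;z}$ as a weighted sum over certain subwords of a reduced expression for $x$, classified according to whether the associated move in \eqref{module-def} is \emph{two-sided} (the cases $s \act w = sws^*$) or \emph{one-sided} (the cases $s \act w = sw$), with the $\pm$ split reflecting a parity condition on the number of one-sided moves encountered along the subword.

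Third, to extract nonnegativity, I would prove that each contribution so indexed evaluates to a Laurent polynomial in $\cA$ with nonnegative coefficients, probably by an induction on $\ell(x)$ that peels off one generator of $x$ at a time and tracks how the four cases of \eqref{module-def} combine with the corresponding cases of the triple product on the $\cH_q$ side. Compatibility with the $^*$-symmetry should be visible at each step because the three-fold product $c_x c_y c_{(x^*)^{-1}}$ was tailored precisely to match the conjugation-like structure underlying the module action.

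The main obstacle will be controlling the interaction between the mixed-sign coefficients in rows three and four of \eqref{module-def}, namely $q^2-q-1$ and $q^2-1$, and the $^*$-action, both of which can a priori produce cancellations of opposite sign. In particular, when a generator $s$ in a reduced expression for $x$ flips between the four cases of \eqref{module-def} as the intermediate twisted involution evolves, the bookkeeping needed to upgrade the integrality of $h^\pm_{x,y;z}$ (guaranteed by Lusztig's parity results, cf.\ Proposition \ref{h-parity}) to full nonnegativity will be delicate, and is likely where the universal hypothesis on $(W,S)$ plays its most essential role by ensuring that no braid relations collapse distinct subwords with opposing signs.
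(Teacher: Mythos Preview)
Your plan has a genuine gap, and it differs in a crucial way from how the paper proceeds.

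The central difficulty with your approach is step two. Expanding $c_x c_y c_{(x^*)^{-1}}$ and $C_x A_y$ in the \emph{standard} bases $t_w$ and $a_w$ is easy enough, but the structure constants $\wt h_{x,y;z}$ and $h^\sigma_{x,y;z}$ are defined relative to the \emph{Kazhdan--Lusztig} bases $c_z$ and $A_z$. Passing back from standard to KL basis requires the inverse KL polynomials, and there is no reason the positivity you observe at the standard-basis level should survive that change of basis. Your proposed ``parity over subwords'' interpretation of $h^\pm_{x,y;z}$ is speculative precisely because of this: the mixed-sign coefficients $q^2-q-1$ and $q^2-1$ in \eqref{module-def} do not organize themselves into anything manifestly nonnegative after re-expansion, and you yourself identify this as the main obstacle without indicating how to resolve it.

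The paper avoids this problem entirely by never leaving the KL basis. Dyer's Theorem \ref{c-structure} already gives $c_xc_y$ as an explicit nonnegative combination of $c_z$'s via the recursive elements $c(w,j)$ of Definition \ref{dyerdef}; the paper proves an analogous formula for $C_xA_y$ in terms of elements $A(w,j)$ (Definition \ref{afterdyerdef}, Theorem \ref{A-structure}). Both $\wt h_{x,y;z}$ and $h^\sigma_{x,y;z}$ are thus nonnegative by inspection, so $h^+_{x,y;z}\ge 0$ is immediate. For $h^-_{x,y;z}$, the paper introduces the $\cA$-linear map $\Phi:\cM_{q^2}\to\cH_q$ sending $A_w\mapsto c_w$ and shows the single containment
\[
c_x c_y c_{(x^*)^{-1}} - \Phi(C_xA_y)\ \in\ \NN[v,v^{-1}]\text{-span}\{c_z : z\in W\},
\]
by a case analysis comparing the explicit KL-basis formulas from Theorems \ref{c-structure} and \ref{A-structure}, controlled by two short technical lemmas (Lemmas \ref{a-lem} and \ref{b-lem}) that match up the recursions defining $c(w,j)$ and $A(w,j)$. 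No standard-basis computation, no tracking of the four cases of \eqref{module-def}, and no sign cancellation is ever needed.
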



Lusztig and Vogan's work \cite{LV} establishes Conjecture \descref{A$'$} when $W$ is a Weyl group or affine Weyl group. In these cases, \cite[Section 5]{LV} also 
mentions without proof that Conjecture \descref{C$'$}  holds (when $*$ is trivial). Conjecture \descref{B$'$} appears still to be open even in the crystallographic case. Here, we will provide some  evidence that these conjectures  hold for all Coxeter systems, by proving them in the case that $W$ is universal. The supplementary paper  \cite{EM2} will provide further evidence coming from the case of finite Coxeter systems.


\subsection{Outline of main results}
\label{summary-sect}

Following Dyer \cite{Dyer}, we say that a Coxeter system $(W,S)$ is \emph{universal} if
the product $st \in W$ has infinite order
for any distinct generators $s,t \in S$. 
In this case $W$ is the group generated by $S$ subject only to the relations $s^2=1$ for $s \in S$.
The elements of $W$ consists of all words in $S$ with distinct adjacent letters, and products of  elements are given by concatenation, subject to the rule that one inductively removes all pairs of equal adjacent letters.


Let $(W,S)$ be any universal Coxeter system and   let $* \in \Aut(W)$ be any $S$-preserving involution of $W$. Restricted to $S$, the map $w \mapsto w^*$  then corresponds to either the identity or to an arbitrary permutation of order two. 
Dyer's paper \cite{Dyer} derives formulas for the polynomials $P_{y,w}$ and for the decomposition of the products $c_xc_y \in \cH_q$ in terms of  the Kazhdan-Lusztig basis, thus establishing Conjectures \descref{A}, \descref{B}, and \descref{C} in the universal case. (Dyer's results are formulated in somewhat different language than these conjectures; cf. Theorems \ref{dyer-thm1} and \ref{c-structure} below.)
Our paper proceeds as something of a sequel to Dyer's work, as follows:
\begin{itemize}
\item In Sections \ref{3a-sect}, \ref{3b-sect}, and \ref{technical-sect} we derive a series of  recurrence relations, with coefficients in $\NN[q]$, for the polynomials $\Psig_{y,z;w} \omdef = \Psig_{y,w} - \Psig_{z,w}$ and $P_{y,z;w} \omdef = P_{y,w} - P_{z,w}$ (with $y \leq z$).

\item These recurrences show that in the universal case  $\Psig_{y,w}$ and $P^\pm_{y,w}$   belong to $\NN[q]$ and are decreasing with respect to the index $y \in \I$ and the Bruhat order; see Theorems \ref{atypical-thm} and \ref{typical-thm} below. 
Thus  Conjectures \descref{A$'$} and \descref{B$'$} hold for universal Coxeter systems.
 
 \item In Section \ref{structure-sect} we describe the decomposition of the product $C_x A_y$ in terms of the distinguished basis $\( A_z\)_{z \in \I}$ of $\cM_{q^2}$; see Theorem \ref{A-structure}. This shows that the Laurent polynomials $h^\sigma_{x,y,;z}$ have nonnegative coefficients in the universal case; see Corollary \ref{A-cor}.
 
 \item Combining these results with Dyer's work finally affords a proof of Conjecture \descref{C$'$} for universal Coxeter systems; see Theorem \ref{last-thm}.
 
\end{itemize}
  Before carrying all this out, we provide in Section \ref{2sect}  a few relevant preliminaries concerning the Bruhat order on $\I$, the polynomials $P_{y,w}$ and $\Psig_{y,w}$, and the associated bases of $\cH_q$ and $\cM_{q^2}$. 

\subsection*{Acknowledgements}

I am grateful  to George Lusztig and David Vogan for helpful discussions and suggestions.

\section{Preliminaries}\label{2sect}


Here, we preserve all conventions from the introduction. Thus, $(W,S)$ is an arbitrary Coxeter system (not necessarily universal) with an 
 $S$-preserving involution $* \in \Aut(W)$, and attached to these choices are the following structures:
 \begin{itemize}
 \item  $\cH_{q^2} = \cA\spanning\{ T_w : w \in W\}$ is the  Hecke algebra of $(W,S)$ with parameter $q^2$.
 
 \item $\I = \{ w \in W : w^{-1} = w^*\}$ is the corresponding set of twisted involutions.
 
 \item $\cM_{q^2} = \cA \spanning \{ a_w : w \in \I\}$ is the $\cH_{q^2}$-module generated by $\I$.
 \end{itemize}
  Recall 
  also the definitions of the special bases $\( C_w \)_{w \in W}\subset \cH_{q^2}$ and  $\(A_w \)_{w \in \I}\subset \cM_{q^2}$, 
 and the polynomials $\( P_{y,w} \)_{y,w \in W}$ and $\( \Psig_{y,w} \)_{y,w \in \I}$ in $ \ZZ[q]$.

\subsection{Bruhat order on twisted involutions}

The set of twisted involutions $\I$ is partially ordered by the Bruhat order $\leq $ on $W$, and this ordering controls many important features of the basis $\(A_w \)_{w \in \I}\subset \cM_{q^2}$ and the polynomials $\( \Psig_{y,w} \)_{y,w \in \I}$. The subposet $(\I,\leq)$ has a more direct characterization and a number of interesting properties, which are meticulously detailed in Hultman's papers \cite{H1,H2,H3}.  Hultman's work extends to arbitrary Coxeter systems many earlier observations of Richardson and Springer \cite{R,RS,S} concerning 
$\I$ when $W$ is finite.
Here we  review some of this material which will be of use later on, particularly in Section \ref{technical-sect}.

Recall from Section \ref{twisted-intro} that we define
\be\label{act-def} s\act  w \omdef = \begin{cases} sw&\text{if }sw=ws^* \\ sws^*&\text{if }sw\neq ws^* \end{cases}
\qquad\text{for $s \in S$ and $w \in \I$.}
\ee
In \cite{LV2}, Lusztig  uses the notation $s \bullet w$ instead of $s \act w$; we prefer the symbol $\act$ to emphasize that $s \in S$ acts to ``twist'' $w \in \I$. Although this notation does not extend to an action of $W$ of $\I$, it does lead to the following definition, adapted from \cite{H1,H2,H3}:

%
\begin{definition}
A sequence 
$ (s_1,s_2,\dots, s_k)$
 with $s_i \in S$ is an \emph{$\I$-expression} for a twisted involution $w \in \I$ if 
$ w = s_1 \act (s_2 \act (\cdots \act (s_k \act 1)\cdots )).$
 An {$\I$-expression} for $w$ is \emph{reduced} if its length $k$ is minimal. We consider the empty sequence $()$ to be a reduced $\I$-expression for  $w=1$.
 \end{definition} 
 What we refer to as $\I$-expressions are the left-handed versions of what Hultman terms  ``$\uS$-expressions'' in \cite{H1,H2,H3}. (In consequence, all of our statements here  are in fact the left-handed versions of Hultman's.) It follows  by induction on  $\ell(w)$ that every $w \in \I$ has a reduced $\I$-expression, and so the next statement (given as \cite[Proposition 2.5]{H3}) is well-defined:
 
 \begin{proposition}[Hultman \cite{H3}]\label{ell-star}
 Choose a reduced $\I$-expression $(s_1,s_2,\dots,s_k)$ for $w \in \I$ and define $w_0=w$ and $w_i = s_i \act w_{i-1}$ for $1\leq i\leq k$. Then 
the number of indices $i \in \{1,2,\dots,k\}$ with 
$s_i w_i =w_is_i^*$
depends only on $w$ and not on the choice of $\I$-expression. \end{proposition}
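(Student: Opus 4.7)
The plan is to show that the quantity in question equals $2k - \ell(w)$, which depends only on $w$ since $k$ is by definition the same for every reduced $\I$-expression for $w$ (as the minimum length over all $\I$-expressions) and $\ell(w)$ is intrinsic.

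First I would verify that for each index $i$, the condition $s_i w_i = w_i s_i^*$ is equivalent to $s_i w_{i-1} = w_{i-1} s_i^*$; i.e., that the $i$th step falls into the first case of the definition \eqref{act-def} of $\act$. In that ``type 1'' case $w_i = s_i w_{i-1}$, so $s_i w_i = w_{i-1}$ and $w_i s_i^* = s_i w_{i-1} s_i^* = (w_{i-1} s_i^*) s_i^* = w_{i-1}$; in the complementary ``type 2'' case $w_i = s_i w_{i-1} s_i^*$, whence $s_i w_i = w_{i-1} s_i^*$ and $w_i s_i^* = s_i w_{i-1}$, so $s_i w_i = w_i s_i^*$ would force $s_i w_{i-1} = w_{i-1} s_i^*$, contrary to assumption. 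Thus the quantity to be counted equals the number of type 1 steps along the expression.

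Next I would argue that every step of a reduced $\I$-expression strictly reduces the Coxeter length $\ell$---by exactly $1$ in the type 1 case and by exactly $2$ in the type 2 case. The main obstacle is to rule out length-increasing steps, for which I would invoke Hultman's $\I$-analogue of the exchange condition \cite{H1,H2,H3}: the minimum length $\ell^*(v)$ of an $\I$-expression for $v$ satisfies $\ell^*(s \act v) = \ell^*(v) + 1$ whenever $\ell(s \act v) > \ell(v)$. A hypothetical length-increasing step $i$ would then give $\ell^*(w_{i-1}) \le k - i - 1$ (since $(s_{i+1},\dots,s_k)$ is an $\I$-expression for $w_i$, so $\ell^*(w_i) \le k - i$), so splicing a reduced $\I$-expression for $w_{i-1}$ onto the prefix $(s_1,\dots,s_{i-1})$ yields an $\I$-expression for $w$ of length at most $k-2$, contradicting the minimality of $k$.

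Finally, letting $a$ denote the number of type 1 steps and $b = k - a$ the number of type 2 steps, telescoping the length decrements along $w_0 = w, w_1, \dots, w_k = 1$ gives $\ell(w) = a + 2b$, which together with $a + b = k$ yields $a = 2k - \ell(w)$. Since both $k$ and $\ell(w)$ depend only on $w$, so does $a$, completing the proof.
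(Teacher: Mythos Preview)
The paper does not prove this proposition; it merely cites it as \cite[Proposition 2.5]{H3}, so there is no in-paper argument to compare against. Your approach---showing that the count equals $2k-\ell(w)$---is the standard one and is essentially how Hultman establishes the result.

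Two points deserve attention. First, a notational clash: you use $\ell^*$ for the minimum length of an $\I$-expression, but in this paper that symbol is reserved precisely for the invariant being defined here; the paper writes $\rho$ for what you mean (see Theorem-Definition~\ref{rho-def}). Second, there is a small gap in your length-drop argument. Your contradiction disposes of \emph{length-increasing} steps, but in a type~2 step one has $w_i=s_iw_{i-1}s_i^*$ and a priori $\ell(w_i)-\ell(w_{i-1})\in\{-2,0,2\}$; your argument does not exclude the possibility $\ell(w_i)=\ell(w_{i-1})$, and if that occurred your telescoping identity $\ell(w)=a+2b$ would fail. What rules this out is the Richardson--Springer/Hultman lemma: for $w\in\I$ with $sw\neq ws^*$, one has $sw>w\Leftrightarrow ws^*>w\Leftrightarrow sws^*>sw$, so $\ell(sws^*)=\ell(w)\pm2$ always. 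Once you cite that (it is implicit in the results you already invoke from \cite{H1,H2,H3}, and underlies the last sentence of Theorem-Definition~\ref{rho-def}), your proof is complete. You should also be aware that the fact you quote about $\rho$ is stated later in this paper as part of Theorem-Definition~\ref{rho-def}; there is no circularity in Hultman's own development, but it is worth flagging the dependency.
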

 
 Define $\ell^* : \I \to \NN$ by setting $\ell^*(w)$ equal to the number  defined in the preceding proposition. (In particular, $\ell^*(1) =0$ and $\ell(s) = 1$ for any $s \in S \cap \I$.)
 The function $\ell^*$ coincides with the map $\phi$ which  Lusztig defines in \cite[Proposition 4.5]{LV2}. This map measures the difference in size between the (ordinary) reduced expressions and reduced $\I$-expressions for a twisted involution, in the  sense  of the following result, which appears as \cite[Theorem 4.8]{H1}.

\begin{thmdef}[Hultman \cite{H1}]  \label{rho-def} Let $\rho : \I \to \NN$ be the map which assigns to $w \in \I$ the common length of any of its reduced $\I$-expressions.  Then the poset $(\I,\leq)$ is graded with rank function $\rho$, and 
 \[\rho = \tfrac{1}{2}\(\ell+\ell^*\).\]
In particular, if $w \in \I$  and $s \in S$ then $\rho (s\act w) = \rho(w) - 1$ if and only if $\ell(sw) = \ell(w)-1$.
\end{thmdef}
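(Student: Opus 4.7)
The plan is to argue, in order: (a) the identity $\rho = \tfrac{1}{2}(\ell+\ell^*)$; (b) that $(\I,\leq)$ is graded with rank function $\rho$; (c) the ``in particular'' clause. Parts (a) and (c) will be straightforward inductions driven by the two subcases of the operation $\act$ defined in \eqref{act-def} together with the well-definedness of $\ell^*$ supplied by Proposition \ref{ell-star}, while (b) is the deepest ingredient and rests on a subword property for $\I$-expressions.

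To prove (a), I would fix a reduced $\I$-expression $(s_1,\ldots,s_k)$ for $w\in\I$ with $\rho(w)\geq 1$ and set $v = s_2\act\cdots\act s_k\act 1$, so $w = s_1\act v$ and $(s_2,\ldots,s_k)$ is a reduced $\I$-expression for $v$. A short exchange argument first shows that reducedness forces $\ell(w)>\ell(v)$. Then \eqref{act-def} splits into two subcases: either $s_1 v = v s_1^*$, in which case $w = s_1 v$ with $\ell(w) = \ell(v)+1$ and $\ell^*(w) = \ell^*(v)+1$ (by the index count in Proposition \ref{ell-star}); or $s_1 v \ne v s_1^*$, in which case $w = s_1 v s_1^*$ with $\ell(w) = \ell(v) + 2$ and $\ell^*(w) = \ell^*(v)$. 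In both subcases $\ell(w) + \ell^*(w) = \ell(v) + \ell^*(v) + 2 = 2\rho(v) + 2 = 2\rho(w)$, so induction on $\rho$ (with trivial base $w = 1$) yields the formula.

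For (b), I would establish the \emph{subword property}: if $y \leq w$ in the Bruhat order on $\I$, then every reduced $\I$-expression $(s_1, \ldots, s_k)$ for $w$ contains a (possibly empty) subsequence that is a reduced $\I$-expression for $y$. The proof goes by induction on $\rho(w)$ via the dichotomy that either $y \leq s_1 \act w$ (pass to the tail and apply the inductive hypothesis) or $s_1 \act y \leq s_1 \act w$ with $s_1 \act y < y$ (an $\I$-exchange argument supplies $s_1 \act y$; apply the inductive hypothesis, then prepend $s_1$). Granted this, $y < w$ in $\I$ forces $\rho(y) < \rho(w)$, and enlarging a chosen subword by one letter exhibits an intermediate $z \in \I$ with $y < z \leq w$ and $\rho(z) = \rho(y) + 1$, so every covering relation in $(\I,\leq)$ raises $\rho$ by exactly one. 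For (c), the formula from (a) reduces the equivalence to tracking how $\ell + \ell^*$ changes under $\act$: if $\ell(sw) = \ell(w) - 1$, then $s \act w < w$ in Bruhat order, so the subword property supplies a reduced $\I$-expression for $w$ beginning with $s$, and the two-subcase analysis from (a) applied with $s_1 = s$ gives $\ell(s \act w) + \ell^*(s \act w) = \ell(w) + \ell^*(w) - 2$, hence $\rho(s \act w) = \rho(w) - 1$. The converse is identical with $\ell(sw) = \ell(w) + 1$, and since $\ell(sw)$ differs from $\ell(w)$ by exactly one, the equivalence follows. The main obstacle in the whole argument is the subword property underpinning (b), which requires a careful $\I$-analogue of the usual Coxeter group exchange condition; the rest reduces to routine bookkeeping using the four cases of \eqref{module-def}.
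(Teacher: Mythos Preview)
The paper does not prove this statement: it is quoted from Hultman as \cite[Theorem 4.8]{H1}, and the subword property you build into part (b) is stated separately---also without proof---as Theorem~\ref{bruhat-thm}, attributed to \cite[Theorem 2.8]{H3}. So there is no argument in the paper to compare against; you are sketching a self-contained proof of a result the paper imports as a black box.

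Your outline is essentially Hultman's, but the dependencies are more tangled than your ordering suggests. The ``short exchange argument'' you invoke in (a) to get $\ell(w)>\ell(v)$ from reducedness of $(s_1,\dots,s_k)$ is not a side remark: it \emph{is} the $\I$-deletion/exchange property that you later flag as the main obstacle underpinning (b). (Once $\ell(w)>\ell(v)$ is in hand, the conclusion $\ell(w)=\ell(v)+2$ in the second subcase is indeed automatic, since $\ell(s_1vs_1^*)\in\{\ell(v)-2,\ell(v),\ell(v)+2\}$ and only the last option exceeds $\ell(v)$; so that step is fine.) The honest logical order is therefore: first establish the $\I$-exchange condition---if $s\in\Des(v)$ for $v\in\I$ with reduced $\I$-expression $(t_1,\dots,t_m)$, then $s\act v$ has a reduced $\I$-expression obtained by deleting some $t_i$---then deduce that partial products in a reduced $\I$-expression strictly increase in $\ell$, and only then run the induction for (a). After that, (b) and (c) proceed as you describe. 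None of this is a genuine error in strategy, but as written your part (a) quietly presupposes the hard ingredient.
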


We conclude  by stating the  ``subword property'' for the Bruhat order on $\I$, which appears  for arbitrary Coxeter systems as \cite[Theorem 2.8]{H3}.

\begin{theorem}[Hultman \cite{H3}]\label{bruhat-thm} 
If $y,w \in \I$ are twisted involutions, then $y \leq w$ if and only if whenever 
  $(s_1,s_2,\dots,s_k)$ is a reduced $\I$-expression for $w$,
 there exist indices $1 \leq i_1 <i_2 < \dots < i_m \leq k$ such that $(s_{i_1},s_{i_2}\dots,s_{i_m})$ is a reduced $\I$-expression for $y$.
 \end{theorem}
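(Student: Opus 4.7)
The plan is to prove Theorem \ref{bruhat-thm} by induction on the rank $\rho(w)$, using as the key input a twisted analogue of the lifting property for the classical Bruhat order on $W$.

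First I would establish the following lifting lemma for $\I$: if $s \in S$ and $y,w \in \I$ satisfy $\ell(sw) < \ell(w)$, then $y \leq w$ if and only if at least one of $s \act y \leq s\act w$ and $y \leq s\act w$ holds. The reverse implication reduces quickly to the classical lifting property, since $s\act w \in \{sw,sws^*\}$ and in either case $s\act w < w$ in the ordinary Bruhat order (and analogously for $s\act y < y$ when $\ell(sy) < \ell(y)$). The forward implication is more delicate: one does case analysis on whether $s\act w$ equals $sw$ or $sws^*$ and on whether $\ell(sy) < \ell(y)$, and in each configuration reduces the required conclusion to one or two applications of the classical lifting property to pairs of the form $(y,w)$, $(y,ws^*)$, or $(sy,ws^*)$.

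With the lemma in hand, both directions of Theorem \ref{bruhat-thm} follow by induction on $\rho(w)$, with the base case $w=1$ trivial. Fix a reduced $\I$-expression $(s_1,\dots,s_k)$ for $w$ and set $w' = s_2 \act \cdots \act s_k \act 1$, so that $w = s_1 \act w'$ with $\rho(w') = \rho(w) - 1$; by Theorem-Definition \ref{rho-def} this forces $\ell(s_1 w) < \ell(w)$, and the lemma applies with $s = s_1$. For the forward direction, if $y \leq w$, the lemma splits into two cases: either $y \leq w'$, in which case the inductive hypothesis gives a subword of $(s_2,\dots,s_k)$ that is a reduced $\I$-expression for $y$, or $s_1 \act y \leq w'$, in which case induction yields a reduced $\I$-expression for $s_1 \act y$ as a subword of $(s_2,\dots,s_k)$, and prepending $s_1$ produces the desired subword for $y$. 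For the reverse direction, if $(s_{i_1},\dots,s_{i_m})$ is a reduced $\I$-expression for $y$ sitting inside $(s_1,\dots,s_k)$, then when $i_1 > 1$ induction gives $y \leq w'$, and the basic fact $w' \leq w$ (obtained by multiplying $w'$ by $s_1$ or by $s_1$ and $s_1^*$, each step length-increasing) yields $y \leq w$; when $i_1 = 1$, induction gives $s_1 \act y \leq w'$, and the lemma yields $y \leq w$.

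The principal obstacle is the lifting lemma itself, whose proof requires careful bookkeeping across the four $\act$-configurations for the pair $(y,w)$. In particular, when $s\act w = sws^*$ one must express the conclusion in terms of $s\act w$ rather than $sw$ or $ws^*$, and this sometimes requires applying the classical lifting property twice in succession (first to pass between $w$ and $ws^*$, then between $ws^*$ and $sws^*$). Once the lemma is in place the inductive argument above is essentially routine.
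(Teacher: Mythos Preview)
The paper does not give its own proof of this theorem; it is quoted from Hultman \cite{H3} as background and used without argument. So there is nothing in the paper to compare against.

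Your proposal is correct and is essentially the natural argument (and, as far as I know, close to Hultman's). A couple of small points worth tightening. First, your lifting lemma actually holds in the sharper form: if $\ell(sw)<\ell(w)$ then $y\le w$ implies $y\le s\act w$ when $sy>y$, and $s\act y\le s\act w$ when $sy<y$. This is what your case analysis shows (in each of the four $\act$-configurations you can chain the classical left and right lifting properties, using that $sw<w$ forces $ws^*<w$ for $w\in\I$, and that $s\act w=sws^*$ forces $\ell(sws^*)=\ell(w)-2$). Stating it this way avoids a small gap in your inductive step: when you land in the branch ``$s_1\act y\le w'$'' you implicitly use that $s_1\act y<y$ so that prepending $s_1$ yields a \emph{reduced} $\I$-expression for $y$; with the sharper lemma this is automatic, and with your weaker ``or'' version you should note that if $s_1\act y>y$ then $y<s_1\act y\le w'$ already puts you in the other branch. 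Second, in the reverse direction you appeal to Theorem-Definition~\ref{rho-def} to know that $\ell(s_1w)<\ell(w)$; make sure you also invoke it (or the definition of reduced $\I$-expression) to justify that $(s_2,\dots,s_k)$ is reduced for $w'$ and that $(s_{i_2},\dots,s_{i_m})$ is reduced for $s_1\act y$ when $i_1=1$.
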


\def\msig{m^\sigma}

\subsection{Multiplication formulas and a recurrence for $\Psig_{y,w}$}\label{algo-sect}

While Theorem-Definition \ref{twisted-thmdef} establishes the existence of the distinguished basis $\( A_w\)_{w \in \I}$ for the $\cH_{q^2}$-module $\cM_{q^2}$, it gives no immediate indication of how $\cH_{q^2}$ acts on this basis, or of how one can compute the polynomials $\(\Psig_{y,w}\)_{y,w \in \I}$.
In this section we summarize the main results of Lusztig \cite{LV2} addressing these problems. 

\begin{notation}
Remember that $q=v^2$. To refer to the coefficients of $\Psig_{y,w}\in \ZZ[q]$ of highest  possible order,
given $y, w \in \I$, we let 
\[
\ba
 \mu^\sigma(y,w) &\omdef= \text{the coefficient of  $v^{\ell(w)-\ell(y)-1}$ in $P^\sigma_{y,w}$}
 \\
  \nu^\sigma(y,w) &\omdef= \text{the coefficient of $v^{\ell(w)-\ell(y)-2}$ in $P^\sigma_{y,w}$.}
  \ea
\]
%
%
%
%
In turn, for each 
 $s \in S$  define another integer $\mu^\sigma(y,w;s)$ by the following more complicated formula:
\[ \mu^\sigma(y,w;s) \omdef= 
\nu^\sigma(y,w)  + \delta_{sy,ys^*} \mu^\sigma(sy,w) -  \delta_{sw,ws^*} \mu^\sigma(y,sw) 
 - \sum_{{x \in \I;\hs sx<x }} \mu^\sigma(y,x) \mu^\sigma(x,w).
 \]
As usual,  the Kronecker delta here means $\delta_{a,b} =1$ if $a=b$ and $\delta_{a,b} = 0$ otherwise. 
\end{notation}

Note since $\Psig_{y,w}$ is a polynomial in $q=v^2$ that $\mu^\sigma(y,w)$ (respectively, $\nu^\sigma(y,w)$) is nonzero only if $y\leq w$ and $\ell(w) - \ell(y)$ is odd (respectively, even).
The numbers $\mu^\sigma(y,w;s)$ have an analogous property, which requires a short argument to prove.
Here and elsewhere, for any $w \in W$ we write 
\be\label{des-def}\Des(w) \omdef = \{ s \in S : \ell(sw) < \ell(w)\}\qquand
\mathrm{Des}_R(w) \omdef = \{ s \in S : \ell(ws) < \ell(w)\}\ee
 for the corresponding left and right descent sets.

\begin{proposition} Let $y,w \in \I$ and $s \in \Des(y) \setminus \Des(w)$. Then the integer $\mu^\sigma(y,w;s)$ is nonzero only if $\ell(w) - \ell(y)$ is even and $y <  s\act w$.
\end{proposition}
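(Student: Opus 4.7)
The proof decomposes naturally into two parts according to the two conclusions, and in each one simply inspects the four summands in the definition of $\mu^\sigma(y,w;s)$. The key elementary observation is that since $\Psig_{y',w'}\in\ZZ[q]=\ZZ[v^2]$, any coefficient of an odd power of $v$ in it vanishes; consequently $\mu^\sigma(y',w')$ is nonzero only when $\ell(w')-\ell(y')$ is odd, while $\nu^\sigma(y',w')$ is nonzero only when $\ell(w')-\ell(y')$ is even.

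For the parity assertion, I inspect each summand. First, $\nu^\sigma(y,w)\neq 0$ directly forces $\ell(w)-\ell(y)$ to be even. Next, $\delta_{sy,ys^*}\mu^\sigma(sy,w)\neq 0$ requires $\ell(w)-\ell(sy)$ to be odd; since $s\in\Des(y)$ gives $\ell(sy)=\ell(y)-1$, this amounts to $\ell(w)-\ell(y)$ being even. Similarly, $\delta_{sw,ws^*}\mu^\sigma(y,sw)\neq 0$ requires $\ell(sw)-\ell(y)$ odd, and since $s\notin\Des(w)$ gives $\ell(sw)=\ell(w)+1$, this again forces $\ell(w)-\ell(y)$ even. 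Finally, each nonvanishing summand $\mu^\sigma(y,x)\mu^\sigma(x,w)$ in the last sum requires $\ell(x)-\ell(y)$ and $\ell(w)-\ell(x)$ to both be odd, hence $\ell(w)-\ell(y)$ even.

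For the inequality $y<s\act w$, the plan uses two facts that transfer from Bruhat order on $W$ to Bruhat order on $\I$ via Hultman's Theorem~\ref{bruhat-thm}: (i) $s\notin\Des(w)$ forces $s\act w>w$ with $\ell(s\act w)\geq\ell(w)+1$, since inspection of the four cases in Lusztig's multiplication table (Theorem-Definition~\ref{lv-thmdef}(a)) shows that $s\act w$ is either $sw$ with $\ell(s\act w)=\ell(w)+1$ or $sws^*$ with $\ell(s\act w)=\ell(w)+2$, and in the latter case also $sw<s\act w$; and (ii) the classical lifting property of Bruhat order on $W$, which in the form needed asserts that if $s\in\Des(y)$, $s\notin\Des(w)$, and $sy\leq w$, then $y\leq sw$. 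With these in hand I treat each term. A nonzero $\nu^\sigma(y,w)$ forces $\ell(y)<\ell(w)$ and $y\leq w$, whence $y<w<s\act w$. A nonzero $\delta_{sy,ys^*}\mu^\sigma(sy,w)$ gives $sy\leq w$ with $\ell(sy)<\ell(w)$, so lifting yields $y\leq sw\leq s\act w$, and the length chain $\ell(y)=\ell(sy)+1\leq\ell(w)<\ell(s\act w)$ upgrades $\leq$ to strict $<$. A nonzero $\delta_{sw,ws^*}\mu^\sigma(y,sw)$ gives $sw=s\act w$ and $y<sw=s\act w$ directly. A nonzero term $\mu^\sigma(y,x)\mu^\sigma(x,w)$ of the final sum produces a strict chain $y<x<w<s\act w$.

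The main technical obstacle will be the second case above, where only the bare Bruhat datum $sy\leq w$ is available and must be leveraged, through the lifting property combined with the strict length comparison $\ell(s\act w)\geq\ell(w)+1$, into the inequality $y<s\act w$. Everything else follows directly from the parity analysis together with the definitions of $\mu^\sigma$ and $\nu^\sigma$.
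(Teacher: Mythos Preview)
Your proof is correct. Both you and the paper dispatch the parity claim in the same way (you spell it out in more detail), but for the inequality $y<s\act w$ the arguments diverge.

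The paper argues by contraposition: assuming $y\not<s\act w$, it first observes $y\not<w$, which kills every term except $\delta_{sy,ys^*}\mu^\sigma(sy,w)$, and then uses Hultman's subword characterization of the Bruhat order on $\I$ (Theorem~\ref{bruhat-thm}) to show that $s\act y<w$ would force $y<s\act w$, so in fact $s\act y\not<w$ and the remaining term vanishes too. Your argument instead proceeds directly, treating each summand in turn; the only nontrivial case is $\delta_{sy,ys^*}\mu^\sigma(sy,w)\neq 0$, where from $sy<w$ you apply the ordinary lifting property for the Bruhat order on $W$ (applied to $sw$, which has $s$ as a left descent) to obtain $y\leq sw\leq s\act w$, and then use the length inequality to make it strict. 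The only implicit fact you rely on is that $sw<s\act w$ when $s\act w=sws^*$ and $s\notin\Des(w)$; this is the standard trichotomy for twisted involutions (as in Richardson--Springer or Hultman) and is indeed what underlies the case split in Theorem-Definition~\ref{lv-thmdef}(a). Your route has the mild advantage of avoiding the $\I$-specific Bruhat subword theorem in favor of the classical lifting lemma; the paper's route is shorter once one accepts Theorem~\ref{bruhat-thm}.
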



 \begin{proof} All terms in the definition of $\mu^\sigma(y,w;s)$ are zero if $\ell(w) - \ell(y)$ is odd. 
Assume $ y \not < s \act w$. Then 
$y \not < w$ 
automatically so $\mu^\sigma(y,w;s) = \delta_{sy,ys^*} \mu^\sigma(sy,w)$.
This is zero unless $sy=ys^*$, but if $sy=ys^*$ then    $sy = s\act y \not < w$, as  $s\act y < w$ would imply the contradiction $y < s \act w$ by Theorem \ref{bruhat-thm}.
(In detail, if $s\act y < w$ then adding $s$ to the beginning of any reduced $\I$-expression for $s \act y$ or $w$  forms a reduced $\I$-expression for $y$ or $s\act w$, respectively.)
Thus  $\mu^\sigma(s\act y,w) = 0$.
 \end{proof}
 

Finally, define $\msig(y \xrightarrow{s}w) \in \cA$ for $y,w \in \I$ and $s \in S$ as the Laurent polynomial 
\be\label{msig}
\msig(y \xrightarrow{s}w) 
= \begin{cases} \mu^\sigma(y,w)(v+v^{-1}) &\text{if $\ell(w) - \ell(y)$ is odd} \\ \mu^\sigma(y,w;s)&\text{if $\ell(w) - \ell(y)$ is even.}\end{cases}
\ee
Lusztig proves the following result, which explains our notation, as \cite[Theorem 6.3]{LV2}.

 \begin{theorem}[Lusztig \cite{LV2}] \label{mult-thm}
Let $w \in \I$ and $s \in S$. 
Then $C_s = q^{-1}(T_s + 1)$ and 
\[C_s A_w = \begin{cases} 
\(q+q^{-1}\) A_w&\text{if $s \in \Des(w)$} \\[-10pt]\\
 \(v+v^{-1}\) A_{s w} +\sum_{{y \in \I ;\hs  sy<y<s w}}  \msig(y \xrightarrow{s}w) A_y&\text{if $s \notin\Des(w)$ and $sw=ws^*$} \\[-10pt]\\
 A_{s w s^*} +\sum_{{y \in \I; \hs sy<y<s ws^*}}  \msig(y \xrightarrow{s}w) A_y&\text{if $s \notin \Des(w)$ and $sw\neq ws^*$}.\end{cases}\]
%
%
%
\end{theorem}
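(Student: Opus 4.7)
The plan is to combine the Hecke-algebra identity $C_s^2 = (q+q^{-1})C_s$ with bar-invariance and the uniqueness half of Theorem-Definition \ref{twisted-thmdef}: first prove the two non-descent cases by extracting coefficients from a bar-invariant expansion, then derive the descent case by induction on the twisted rank $\rho$. The formula $C_s = q^{-1}(T_s+1)$ follows from the definition of $C_s$ combined with $P_{1,s}=P_{s,s}=1$ (computed from Theorem-Definition \ref{kl-thmdef}), and the quadratic relation $T_s^2 = (q^2-1)T_s+q^2$ yields $(T_s+1)^2 = (q^2+1)(T_s+1)$, hence $C_s^2 = (q+q^{-1})C_s$.

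For $s \notin \Des(w)$, set $B := C_s A_w$. Bar-invariance $\overline{B}=B$ is immediate from $\overline{C_s}=C_s$, $\overline{A_w}=A_w$, and Theorem-Definition \ref{lv-thmdef}(b). Using $C_s = q^{-1}(T_s+1)$ and the module formula (\ref{module-def}), I would expand $B$ in the standard basis and verify
\[
B \;=\; \epsilon\, v^{-\ell(s\act w)}\, a_{s\act w} \;+\; \sum_{y < s\act w}\, b_y\, a_y,\qquad b_y \in \cA,
\]
where $\epsilon = v+v^{-1}$ if $sw=ws^*$ and $\epsilon = 1$ if $sw\neq ws^*$. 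Since $B$ is bar-invariant, its unique expansion $B = \sum_z c_z A_z$ in the basis of Theorem-Definition \ref{twisted-thmdef} has each $c_z \in \ZZ[v+v^{-1}]$. Matching the $a_{s\act w}$-coefficients gives $c_{s\act w} = \epsilon$, and iterating the peeling $B \mapsto B - \epsilon A_{s\act w} \mapsto \cdots$ downward in Bruhat order successively determines each $c_y$ by matching $a_y$-coefficients (subtracting the known contributions $v^{-\ell(z)} P^\sigma_{y,z}$ from previously peeled $A_z$'s). A direct examination of the $T_s$-action forces $c_y = 0$ whenever $sy > y$, leaving only summands indexed by $y \in \I$ with $sy<y<s\act w$.

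For $s \in \Des(w)$, proceed by induction on $\rho(w)$. Put $w' = s\act w$; by Theorem-Definition \ref{rho-def}, $\rho(w') = \rho(w)-1$ and $s \notin \Des(w')$. Applying the non-descent formula to $w'$ gives
\[
C_s A_{w'} \;=\; \epsilon\, A_w \;+\; \sum_{y \in \I,\ sy<y<w}\msig(y \xrightarrow{s} w')\, A_y,
\]
and multiplying both sides by $C_s$ turns the left side into $C_s^2 A_{w'} = (q+q^{-1}) C_s A_{w'}$. On the right, each summand $A_y$ has $s \in \Des(y)$ and $\rho(y) < \rho(w)$ (by gradedness of $\rho$), so the inductive hypothesis yields $C_s A_y = (q+q^{-1}) A_y$. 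Canceling $(q+q^{-1})$ times the $A_y$ sum from both sides leaves $\epsilon\, C_s A_w = (q+q^{-1})\epsilon\, A_w$, whence $C_s A_w = (q+q^{-1}) A_w$.

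The main obstacle is verifying, in the non-descent case, that each peeled coefficient $c_y$ matches the explicit formula $\msig(y\xrightarrow{s}w)$ from (\ref{msig}). When $\ell(w)-\ell(y)$ is odd, the top-degree contribution to the $a_y$-coefficient of $B$ is $\mu^\sigma(y,w)$, and self-conjugacy in $\ZZ[v+v^{-1}]$ forces $c_y = \mu^\sigma(y,w)(v+v^{-1})$. When $\ell(w)-\ell(y)$ is even, the integer $\mu^\sigma(y,w;s)$ is \emph{defined} precisely to absorb four corrections produced by the peeling recursion: (i) the subleading $\nu^\sigma(y,w)$ term of $P^\sigma_{y,w}$; (ii) a $\delta_{sy,ys^*}\mu^\sigma(sy,w)$ contribution from the middle branches of (\ref{module-def}) when $T_s$ acts on $a_y$; (iii) a $\delta_{sw,ws^*}\mu^\sigma(y,sw)$ contribution from the $\{a_y\}$-expansion of $\epsilon\, A_{s\act w}$; and (iv) a sum over intermediate $x \in \I$ with $sx<x$ accounting for the $A_x$-multiples already subtracted at earlier peeling stages. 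Balancing these against the $a_y$-coefficient of $B - \epsilon A_{s\act w} - \sum_x \msig(x\xrightarrow{s}w) A_x$ is elementary but delicate, and is exactly what the recursive definition of $\mu^\sigma(y,w;s)$ is engineered to accomplish.
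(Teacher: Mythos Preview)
The paper does not supply a proof of this statement; it is quoted as \cite[Theorem 6.3]{LV2} and used as a known input. So there is no in-paper argument to compare your sketch against.

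That said, let me comment on your outline. The descent case via $C_s^2=(q+q^{-1})C_s$ and induction on $\rho$ is correct and efficient, once the non-descent formula is available. The non-descent case, however, is where the real work lies, and your sketch leaves the two crucial claims unproved. First, the assertion that ``a direct examination of the $T_s$-action forces $c_y=0$ whenever $sy>y$'' is not justified. The most natural route---observing that $B=C_sA_w$ lies in the $(q+q^{-1})$-eigenspace of $C_s$ and that the $A_z$ with $sz<z$ span that eigenspace---requires knowing the descent formula $C_sA_z=(q+q^{-1})A_z$ for such $z$; but you derive the descent case \emph{from} the non-descent case, so this is circular as written. (Equivalently, you would need $P^\sigma_{y,z}=P^\sigma_{s\act y,z}$ for $s\in\Des(z)$, which in this paper is Corollary~\ref{main-recurrence}(a), itself deduced from the theorem you are proving.) The fix is to run a \emph{simultaneous} induction on $\rho(w)$ establishing both cases together, so that at level $N$ the descent formula is already in hand for all $z$ with $\rho(z)\le N$ before you attack the non-descent formula at level $N$; your writeup does not set this up. Second, the explicit identification $c_y=\msig(y\xrightarrow{s}w)$ in the even-parity case is the computational heart of the result, and while you correctly name the four contributions that must balance, you do not carry out the verification---and it is not a one-line check. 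In Lusztig's treatment in \cite{LV2} the multiplication formula is essentially woven into the inductive \emph{construction} of the $A_w$ (so that the non-descent identity and the existence of $A_{s\act w}$ are proved in the same breath), which sidesteps both difficulties.
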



We may equivalently rewrite this theorem as a recurrence for the polynomials $\Psig_{y,w}$. This provides the following ``twisted'' analog of one of the standard recurrences (see, e.g., \cite[Theorem 5.1.7]{CCG})  for the ordinary Kazhdan-Lusztig polynomials $P_{y,w}$.

\begin{corollary}\label{main-recurrence}
Let $y,w \in \I$ with $y \leq w$  and  $s \in \Des(w)$.  

\begin{enumerate}

\item[(a)] $\Psig_{y,w} = \Psig_{s\act y,w}$.

\item[(b)] If $ s \in \Des(y)$ and   $w'=s\act w$ and  $c = \delta_{sw,ws^*}$ and $d = \delta_{sy,ys^*}$, then
{
\[\label{partb}  (q+1)^c \Psig_{y,w} = 
(q+1)^d P_{s\act y,w'}^\sigma  +  q(q-d)P_{ y,w'}^\sigma 
- 
\sum_{\substack{z \in \I;\hs sz<z \\ y \leq z < w}} v^{\ell( w)-\ell(z)+c}\cdot \msig(z \xrightarrow{s} w') \cdot P_{ y,z}^\sigma.
\]
}
\end{enumerate}
\end{corollary}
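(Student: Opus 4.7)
The plan is to obtain both claims by expanding the relevant identity from Theorem \ref{mult-thm} in the standard basis $\{a_y\}_{y \in \I}$ and comparing coefficients of $a_y$. The key preliminary calculation is that, since $C_s = q^{-1}(T_s + 1)$, the four cases of \eqref{module-def} give
\[
C_s a_x \ =\ c(x) \cdot (a_x + a_{s\act x}) \qquad \text{for every } x \in \I,
\]
where $c(x) \in \cA$ is nonzero and equals $q^{-1}$, $1 + q^{-1}$, $q - 1$, or $q$ according to which of the four cases of \eqref{module-def} governs $x$. Two features of $c$ will be decisive: (i) the symmetric identity $c(x) + c(s\act x) = q + q^{-1}$ holds in every case (cases 1 and 4 pair off, as do cases 2 and 3), and (ii) when $s \in \Des(x)$ one has $c(x) = q - d$ and $c(s\act x) = q^{-1}(q+1)^d$ with $d = \delta_{sx, xs^*}$.

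For part (a), since $s \in \Des(w)$, Theorem \ref{mult-thm} gives $C_s A_w = (q + q^{-1}) A_w$. Substituting $A_w = v^{-\ell(w)} \sum_y \Psig_{y,w}\, a_y$ and the preceding display into both sides and comparing coefficients of $a_y$ yields
\[
(q + q^{-1})\, \Psig_{y, w} \ =\ c(y)\, \Psig_{y,w} + c(s\act y)\, \Psig_{s\act y, w}.
\]
Using $c(y) + c(s\act y) = q + q^{-1}$, this reduces to $c(s\act y) \cdot (\Psig_{y,w} - \Psig_{s\act y, w}) = 0$; since $c(s\act y)$ is a nonzero element of the integral domain $\cA$, we conclude $\Psig_{y,w} = \Psig_{s\act y, w}$.

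For part (b), set $w' = s \act w$; since $s \act w' = w$ and $\rho(w') < \rho(w)$, we have $s \notin \Des(w')$, and the two sub-cases $w' = sw$ (when $c=1$) and $w' = sws^*$ (when $c=0$) give $\ell(w) - \ell(w') = 2 - c$ where $c = \delta_{sw, ws^*}$. Theorem \ref{mult-thm} applied to $w'$ reads
\[
C_s A_{w'} \ =\ (v + v^{-1})^c A_w \ +\ \sum_{\substack{z \in \I \\ sz < z < w}} \msig(z \xrightarrow{s} w')\, A_z.
\]
Solve for $(v + v^{-1})^c A_w$, substitute each $A_u = v^{-\ell(u)} \sum_y \Psig_{y,u}\, a_y$ (using the display above to expand $C_s A_{w'}$), extract the coefficient of $a_y$, and multiply through by $v^{\ell(w) + c}$. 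The left side becomes $(q+1)^c \Psig_{y,w}$; the $C_s A_{w'}$ contribution becomes $v^{\ell(w) - \ell(w') + c} \bigl(c(y) \Psig_{y,w'} + c(s\act y) \Psig_{s\act y, w'}\bigr)$, which simplifies to $q(q-d) \Psig_{y, w'} + (q+1)^d \Psig_{s\act y, w'}$ using $\ell(w) - \ell(w') + c = 2$ and the explicit values $c(y) = q - d$, $c(s\act y) = q^{-1}(q+1)^d$ (applicable since $s \in \Des(y)$); and each summand acquires the weight $v^{\ell(w) - \ell(z) + c}$. This is exactly the identity in (b). The argument is essentially bookkeeping, and I expect no real obstacle beyond carefully tracking the four cases of \eqref{module-def} in deriving the formula for $C_s a_x$ and the symmetric identity $c(x) + c(s\act x) = q + q^{-1}$.
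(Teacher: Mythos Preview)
Your proposal is correct and follows the same approach as the paper: both obtain the corollary by expanding the identity of Theorem~\ref{mult-thm} in the standard basis and comparing coefficients of $a_y$. Your write-up is more explicit than the paper's terse sketch; in particular, the factored form $C_s a_x = c(x)(a_x + a_{s\act x})$ together with the symmetry $c(x)+c(s\act x)=q+q^{-1}$ is a clean way to organize the computation that the paper leaves implicit.
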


\begin{proof}
The corollary results from comparing coefficients of $a_y$ on both sides of the  equation in Theorem \ref{mult-thm}. 
Rewriting the right hand side in the standard basis $\(a_w\)_{ w\in \I}$ is straightforward from the definitions in Section \ref{twisted-intro}, while rewriting the left hand side can be done using the identities $C_s = q^{-1}(T_s +1)$ and $A_w = v^{-\ell(w)} \sum_{y \in \I} \Psig_{y,w} a_y$ with the multiplication rule \eqref{module-def}.
\end{proof}


 Translating this corollary  into an algorithm for computing the polynomials $\Psig_{y,w}$ involves a little subtlety, because  terms on the right hand side of the recurrence in part (b) can  depend on $\Psig_{y,w}$. The companion paper \cite{EM2} discuss these issues in detail (and provides pseudocode for the resulting algorithm).

 One can establish several notable properties of the polynomials $\Psig_{y,w}$   using Corollary \ref{main-recurrence} and induction on $\ell(w)$. For example, we have these facts mentioned by Lusztig in \cite{LV2}:

\begin{corollary}[Lusztig \cite{LV2}]\label{lv-cor}
Let $y,w \in \I$ with $y\leq w$.
\begin{enumerate}
\item[(a)] $\Psig_{y,w}$ has constant coefficient 1.
\item[(b)] $\Psig_{y,w} = \Psig_{y^{-1},w^{-1}} = \Psig_{y^{*},w^{*}}$.
\end{enumerate}
\end{corollary}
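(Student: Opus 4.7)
The plan is to establish both parts by structural induction built on Corollary~\ref{main-recurrence}, together with a uniqueness argument for part~(b).

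For part~(a), I would run a primary induction on $\ell(w)$ with a secondary induction on the gap $\ell(w)-\ell(y)$. The base $y=w$ gives $\Psig_{w,w}=1$ immediately. Otherwise pick $s\in\Des(w)$. If $s\notin\Des(y)$, Corollary~\ref{main-recurrence}(a) gives $\Psig_{y,w}=\Psig_{s\act y,w}$, and the lifting property for the Bruhat order on $\I$ (which follows from the subword characterization in Theorem~\ref{bruhat-thm}) ensures $s\act y\leq w$, so the secondary induction handles the smaller gap. If instead $s\in\Des(y)$, I would substitute $v=0$ in the recurrence of Corollary~\ref{main-recurrence}(b): the middle term $q(q-d)\Psig_{y,w'}$ vanishes, and every summand in the correction carries a factor $v^{\ell(w)-\ell(z)+c}$ with exponent at least $1$ (since $z<w$ forces $\ell(w)-\ell(z)\geq 1$), which likewise vanishes. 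The identity collapses to $\Psig_{y,w}(0)=\Psig_{s\act y,w'}(0)$, and because $w'=s\act w$ has $\ell(w')<\ell(w)$ the primary induction finishes the job.

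For part~(b), the key observation is that $y,w\in\I$ force $y^{-1}=y^*$ and $w^{-1}=w^*$, so the two claimed equalities collapse to the single identity $\Psig_{y,w}=\Psig_{y^*,w^*}$. To prove this I would transport the twisted Kazhdan--Lusztig structure through the automorphism $*$: let $\tau:\cH_{q^2}\to\cH_{q^2}$ be the $\cA$-algebra automorphism sending $T_w\mapsto T_{w^*}$, and let $\Phi:\cM_{q^2}\to\cM_{q^2}$ be the $\cA$-linear bijection $\a_w\mapsto\a_{w^*}$. A direct inspection of the four cases in \eqref{module-def}, using $(sw)^*=s^*w^*$, $(sws^*)^*=s^*w^*s$, and the $*$-invariance of $\ell$ and $\leq$, shows $\Phi(T_s m)=\tau(T_s)\Phi(m)$, so $\Phi$ is a $\tau$-twisted module isomorphism. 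Since $\tau$ commutes with the bar involution of $\cH_{q^2}$ and $\Phi(\a_1)=\a_1$, the uniqueness clause of Theorem-Definition~\ref{lv-thmdef}(b) forces $\Phi$ to commute with bar on $\cM_{q^2}$. Consequently $\Phi(A_w)$ is bar-invariant, and its expansion $v^{-\ell(w)}\sum_y\Psig_{y,w}\,\a_{y^*}$ satisfies the three defining properties of $A_{w^*}$ in Theorem-Definition~\ref{twisted-thmdef} (degree bound, Bruhat-vanishing, and diagonal value). Uniqueness then gives $\Phi(A_w)=A_{w^*}$, and comparing coefficients yields $\Psig_{u^*,w}=\Psig_{u,w^*}$ for all $u\in\I$.

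The main obstacle is the vanishing bookkeeping in part~(a): one has to confirm that every $v$-positive term in Corollary~\ref{main-recurrence}(b) genuinely has strictly positive $v$-exponent at $v=0$, which rests on $z<w$ holding throughout the correction sum. Part~(b) is conceptually cleaner, but the case-by-case verification that \eqref{module-def} is $*$-equivariant requires attention to how $*$ interchanges the forms $sw$ and $sws^*$ with $s^*w^*$ and $s^*w^*s$ in a compatible way.
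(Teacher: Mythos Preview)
Your argument is correct and, for part~(a), follows exactly the route the paper sketches (induction on $\ell(w)$ via Corollary~\ref{main-recurrence}). One detail deserves a sentence more of care: in the correction sum you note that $v^{\ell(w)-\ell(z)+c}$ has positive exponent, but $\msig(z\xrightarrow{s}w')$ may equal $\mu^\sigma(z,w')\,(v+v^{-1})$, so a stray $v^{-1}$ could in principle survive. It does not: $\mu^\sigma(z,w')\neq 0$ forces $z<w'$, hence $\ell(z)\leq\ell(w')-1=\ell(w)-(2-c)-1$, and then $\ell(w)-\ell(z)+c-1\geq 2$. In the even-parity case $\msig$ is constant and the exponent $\ell(w)-\ell(z)+c$ is already even and at least~$2$. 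Either way the term vanishes at $v=0$, and your conclusion $\Psig_{y,w}(0)=\Psig_{s\act y,w'}(0)$ stands.

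For part~(b) you take a genuinely different route from what the paper hints at. The paper's suggestion (``Corollary~\ref{main-recurrence} and induction on $\ell(w)$'') would have you observe that applying $*$ to every ingredient of the recurrence in Corollary~\ref{main-recurrence}(b) converts it into the identical recurrence for $\Psig_{y^*,w^*}$, so the two families agree by induction. Your transport-of-structure argument instead invokes the uniqueness clauses of Theorem-Definitions~\ref{lv-thmdef}(b) and~\ref{twisted-thmdef} once, after building the $\tau$-twisted module isomorphism $\Phi$. Both arguments work; the inductive one stays inside the recurrence machinery and requires tracking how $*$ interacts with each of $\mu^\sigma$, $\nu^\sigma$, and $\mu^\sigma(\,\cdot\,;\,s)$, while yours is cleaner and makes the symmetry structural rather than computational. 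Your observation that $y^{-1}=y^*$ for $y\in\I$ collapses the two claimed equalities to one is exactly right and worth stating first in either approach.
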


Part (a), which Lusztig states explicitly as \cite[Proposition 4.10]{LV2}, mirrors the fact that $P_{y,w}$ has constant coefficient 1 whenever $y\leq w$ in $W$. Thus the polynomials $P^+_{y,w}$ and $P^-_{y,w}$ (see \eqref{pm-def})  have constant coefficients 1 and 0, respectively, whenever $y \leq w$ in $\I$.
The following lemma states  two other properties of the ordinary Kazhdan-Lusztig polynomials which we will have cause to refer to. These results all appear, for example, in  \cite[Chapter 7]{Hu}.

 \begin{lemma}\label{kl-cor}
Let $y,w \in W$. 
\begin{enumerate}
\item[(a)] $P_{y,w} = P_{sy,w}$ if $s \in \Des(w)$   and $P_{y,w} = P_{ys,w}$ if $s \in \mathrm{Des}_R(w)$. 
\item[(b)] $P_{y,w} = P_{y^{-1},w^{-1}} = P_{y^*,w^*}$.
\end{enumerate}
\end{lemma}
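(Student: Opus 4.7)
The plan splits into two parts, both standard in Kazhdan--Lusztig theory.

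For part (a), the strategy is to first establish the auxiliary identity
\[ t_s c_w = q\, c_w \qquad\text{whenever } s \in \Des(w), \]
by induction on $\ell(w)$. The base case $w=s$ is a direct application of the quadratic relation $t_s^2 = (q-1)t_s + q$, which gives $t_s c_s = q c_s$ for $c_s = v^{-1}(1+t_s)$. For the inductive step I would pick $s \in \Des(w)$ and set $w' = sw$, so that $s \notin \Des(w')$. The standard recursive construction of the Kazhdan--Lusztig basis expresses $c_w$ as $c_s c_{w'}$ minus a linear combination of elements $c_z$ with $z < w'$ and $sz < z$. Applying $t_s$ to both sides, the first term contributes $q\, c_s c_{w'}$ (from the base case), while the inductive hypothesis handles each $t_s c_z$; summing gives $t_s c_w = q c_w$. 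Once this is in hand, equating coefficients of $t_z$ on both sides of $t_s c_w = q c_w$, using the multiplication rule for $\cH_q$ to expand the left side, produces $(q-1)P_{z,w}+P_{sz,w} = q P_{z,w}$ when $sz < z$ and $qP_{sz,w} = qP_{z,w}$ when $sz > z$; in either case $P_{z,w} = P_{sz,w}$. The right-handed version $P_{y,w} = P_{ys,w}$ for $s \in \mathrm{Des}_R(w)$ then falls out immediately from the first equality in part (b).

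For part (b), both equalities will follow from the uniqueness clause of Theorem-Definition \ref{kl-thmdef}. I would introduce two maps on $\cH_q$: the $\cA$-linear antiautomorphism $\iota$ determined by $\iota(t_w) = t_{w^{-1}}$, and the $\cA$-algebra automorphism $\phi$ determined by $\phi(t_s) = t_{s^*}$ for $s \in S$. Both preserve the bar operator, the former because $\overline{t_w} = (t_{w^{-1}})^{-1}$ and the antiautomorphism property of $\iota$ produces the matching expression on the other side, the latter because the involution $*$ preserves inversion. Consequently $\iota(c_w)$ and $\phi(c_w)$ are bar-invariant. Expanding each in the standard basis $\{t_y\}$, and using that $\ell$ is invariant under $w\mapsto w^{-1}$ and under $*$, one checks that $\iota(c_w)$ and $\phi(c_w)$ satisfy the defining properties of $c_{w^{-1}}$ and $c_{w^*}$ respectively. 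Uniqueness then forces the coefficient identities $P_{y,w} = P_{y^{-1}, w^{-1}}$ and $P_{y,w} = P_{y^*, w^*}$.

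Neither step presents a substantial obstacle, since each is well-known. The only point demanding a small amount of care is the verification that $\iota$ and $\phi$ really do commute with the bar operator; this rests on the antiautomorphism property of $\iota$ interacting correctly with the inverse appearing in $\overline{t_w} = (t_{w^{-1}})^{-1}$, and on the compatibility of $*$ with inversion in $W$.
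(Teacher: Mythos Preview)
Your proposal is correct and follows the standard textbook arguments. The paper itself does not prove this lemma at all: it simply states the result and cites Humphreys \cite[Chapter 7]{Hu} as a reference, so there is no ``paper's own proof'' to compare against. Your outline is essentially the argument one finds in that reference (or in Bj\"orner--Brenti), so nothing more is needed here.
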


\subsection{Parity statements}\label{parity-sect}

Conjectures \descref{A$'$}, \descref{B$'$} and \descref{C$'$} are statements concerning whether the Laurent polynomials
\[ P^\pm_{y,w} = \tfrac{1}{2} \( P_{y,w} \pm \Psig_{y,w}\)\qquand h^\pm_{x,y;z} = \tfrac{1}{2} \( \wt h_{x,y;z} \pm h^\sigma_{x,y;z}\)\]
defined by equations \eqref{pm-def}, \eqref{h-def}, and \eqref{hpm-def}
for $x \in W$ and $y,z,w \in \I$ have nonnegative integer coefficients.  It is not clear \emph{a priori} that these polynomials even have integer coefficients, 
and we spend this last preliminary section clarifying this property.

Here we write $f\equiv g \modu 2)$ for two Laurent polynomials $f,g \in \cA$ if $f-g$ has only even integer coefficients; i.e., if $f-g=2h$ for some $h \in \cA$.
Lusztig  proves the following result, showing  that $P^\pm_{y,w} \in \ZZ[q]$, as \cite[Theorem 9.10]{LV2}.

\begin{proposition}[Lusztig \cite{LV2}]\label{p-parity} For all $y,w \in \I$ we have $P_{y,w} \equiv \Psig_{y,w} \modu 2)$.
\end{proposition}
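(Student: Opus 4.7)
The plan is to argue by induction on $\ell(w)$, showing that $\Psig_{y,w} - P_{y,w} \in 2\cA$ for every $y \in \I$. The base case $w = 1$ is immediate since both polynomials equal $\delta_{y,1}$. For the inductive step, fix $w \in \I$ with $\ell(w) \geq 1$ and choose $s \in \Des(w)$; since $w \in \I$, we automatically have $s^* \in \mathrm{Des}_R(w)$. If $s \notin \Des(y)$, then Lemma~\ref{kl-cor}(a), applied on the left (for $s$) and right (for $s^*$), yields $P_{y,w} = P_{sy, w} = P_{sys^*, w}$, while Corollary~\ref{main-recurrence}(a) gives $\Psig_{y,w} = \Psig_{s\act y, w}$. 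Hence the congruence for $y$ is equivalent to the one for $s\act y$, and a direct length computation shows $s \in \Des(s \act y)$; thus we may assume $s \in \Des(y) \cap \Des(w)$.

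In this case, setting $w' = s\act w$, $c = \delta_{sw, ws^*}$, and $d = \delta_{sy, ys^*}$, Corollary~\ref{main-recurrence}(b) gives
\[
(q+1)^c \Psig_{y,w} \,=\, (q+1)^d \Psig_{s\act y, w'} + q(q-d)\, \Psig_{y, w'} \,-\, \sum_{\substack{z \in \I,\ sz < z\\ y \leq z < w}} v^{\ell(w)-\ell(z)+c}\,\msig(z \xrightarrow{s} w')\, \Psig_{y, z}.
\]
Every $\Psig$-polynomial on the right has second argument of length strictly less than $\ell(w)$, so by the inductive hypothesis each is congruent mod $2$ to its ordinary KL counterpart. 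In particular $\mu^\sigma \equiv \mu \pmod 2$ and $\nu^\sigma \equiv \nu \pmod 2$ for the relevant indices, so $\msig(z \xrightarrow{s} w')$ also reduces mod $2$ to an expression in ordinary $\mu$-coefficients by way of \eqref{msig} and the defining formula for $\mu^\sigma(y, w; s)$.

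The heart of the proof is then to check that the mod-$2$ reduction of the right-hand side above equals $(q+1)^c P_{y,w} \pmod{2}$, after which cancellation of $(q+1)^c$ --- a non-zero-divisor in the integral domain $\cA/2\cA = \FF_2[v, v^{-1}]$ --- gives $P_{y,w} \equiv \Psig_{y,w} \pmod{2}$. To prove the mod-$2$ identity I would derive a parallel recurrence for $(q+1)^c P_{y,w}$ in $\cH_q$. When $c = 1$, so $w' = sw \in \I$ has length $\ell(w) - 1$, the standard Kazhdan--Lusztig recurrence $P_{y,w} = P_{sy, sw} + q P_{y, sw} - \sum_{z:\, sz<z<w} \mu(z, sw)\, v^{\ell(w) - \ell(z)} P_{y,z}$ handles the job directly (with Lemma~\ref{kl-cor}(a) used to absorb the $(q+1)^d$-factor when $d = 1$). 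When $c = 0$, so $w' = sws^*$ has length $\ell(w) - 2$ but $sw \notin \I$, I would instead expand the iterated product $c_s \cdot c_{sws^*} \cdot c_{s^*} \in \cH_q$; since $s \in \Des(w)$ and $s^* \in \mathrm{Des}_R(w)$, its leading summand is $c_w$, and reading off the coefficient of $t_y$ yields a recurrence for $P_{y,w}$ that descends all the way to length $\ell(w) - 2$ and can be matched term-by-term mod $2$ against the twisted recurrence.

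The main obstacle is the combinatorial bookkeeping in the $c = 0$ case. The iterated product $c_s \cdot c_{sws^*} \cdot c_{s^*}$ produces numerous correction terms --- coming from the $\mu$-expansions of $c_{ws^*} \cdot c_{s^*}$ and of the individual $c_z \cdot c_{s^*}$ --- whose mod-$2$ contributions must be matched against the $\delta$- and $\mu^\sigma$-dependent terms in the twisted recurrence, and in particular against the extra pieces appearing in Lusztig's definition of $\mu^\sigma(y, w; s)$. Verifying this combinatorial matching is the most delicate step of the argument; an appealing alternative would be to sidestep it by arguing abstractly from the bar-invariance and degree bounds characterizing both $P_{y,w}$ and $\Psig_{y,w}$, but I do not see a clean way to make such an abstract argument work without essentially re-proving the mod-$2$ version of the recurrence by hand.
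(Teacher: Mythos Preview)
The paper does not give its own proof of this proposition: it is quoted from Lusztig's paper \cite{LV2} (specifically \cite[Theorem~9.10]{LV2}) and stated without argument. So there is nothing in the present paper to compare your proposal against directly.

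That said, a few comments on your approach. The reduction to the case $s\in\Des(y)$ is fine, and in the case $c=1$ (so $w'=sw$ has length $\ell(w)-1$) the standard Kazhdan--Lusztig recurrence lines up mod~$2$ with Corollary~\ref{main-recurrence}(b) after some work. The case $c=0$ is where the real content lies, and you have identified the issue correctly: you need a recurrence for $P_{y,w}$ that drops the length of $w$ by $2$, obtained by expanding $c_s c_{sws^*} c_{s^*}$, and then you must match its many $\mu$-correction terms against the pieces of $\mu^\sigma(y,w;s)$ in the twisted recurrence. This is doable but genuinely delicate, and your proposal stops short of carrying it out; as written it is an outline rather than a proof.

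For context, the paper's proof of the companion Proposition~\ref{h-parity} rests on Lusztig's mod-$2$ identity \cite[9.4(a)]{LV2},
\[
\proj\bigl(t_x t_y (t_x)^\dag\bigr)\ \equiv\ T_x a_y \pmod 2,
\]
which is a structural statement at the level of the $\cH_q$- and $\cH_{q^2}$-modules rather than a coefficient-by-coefficient recurrence comparison. Lusztig's proof of the present proposition in \cite{LV2} proceeds in that spirit; if you want to complete your argument you might find it cleaner to aim for such a module-level congruence (from which the statement about $P_{y,w}$ and $\Psig_{y,w}$ can be read off by comparing bar-invariant elements) rather than chasing the recurrences term by term.
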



The next proposition shows likewise that $h^\pm_{x,y;z} \in \ZZ[v,v^{-1}]$. In the case that $(W,S)$ is a Weyl group and $*$ is trivial, this property was mentioned without proof in \cite[Section 5]{LV}.

\begin{proposition}\label{h-parity} For all
 $x \in W$ and $y,z \in \I$
we have $\wt h_{x,y,z} \equiv  h^\sigma_{x,y,z} \modu 2)$.
\end{proposition}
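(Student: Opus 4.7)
The plan is to prove the congruence by induction on $\ell(x)$, reducing the general case to a parity lemma comparing the two-sided conjugation $c_s(\cdot)c_{s^*}$ in $\cH_q$ with the single-sided module action $C_s(\cdot)$ in $\cM_{q^2}$. For the base case $x=1$ we have $c_1 c_y c_1 = c_y$ and $C_1 A_y = A_y$, so $\wt h_{1,y;z}=h^\sigma_{1,y;z}=\delta_{y,z}$ and there is nothing to check.

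For the inductive step, I would pick $s\in \Des(x)$ and write $x=sx'$ with $\ell(x')=\ell(x)-1$. The Kazhdan--Lusztig multiplication rule in $\cH_q$ and its exact parallel in $\cH_{q^2}$ give
\[c_s c_{x'} = c_x + \sum_u \mu(u,x')\, c_u, \qquad C_s C_{x'} = C_x + \sum_u \mu(u,x')\, C_u,\]
with the \emph{same} integer coefficients $\mu(u,x')$ in both expressions (since these $\mu$'s are ordinary Kazhdan--Lusztig integers, invariant under the substitution $q\mapsto q^2$). Since $(x^*)^{-1}=(x'^*)^{-1}s^*$ with $s^* \notin \mathrm{Des}_R((x'^*)^{-1})$, a right-handed version of the same formula expands $c_{(x^*)^{-1}}$. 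Substituting these into $c_x c_y c_{(x^*)^{-1}}$ yields, modulo strictly shorter terms to which the induction hypothesis already applies, the sandwich $c_s\,(c_{x'} c_y c_{(x'^*)^{-1}})\,c_{s^*}$; by induction on $\ell(x')$, the middle factor $c_{x'} c_y c_{(x'^*)^{-1}}$ has coefficients on $c_z$ (for $z\in \I$) matching those of $C_{x'} A_y$ on $A_z$ modulo $2$, and a similar decomposition reduces $C_x A_y$ to $C_s(C_{x'}A_y)$ plus corrections of shorter length.

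The essential new ingredient, therefore, is a parity lemma for the case $\ell(x)=1$: for every $y\in\I$, $s\in S$, and $z\in\I$, the coefficient of $c_z$ in $c_s c_y c_{s^*}$ equals modulo $2$ the coefficient of $A_z$ in $C_s A_y$. This would be proved by matching the four cases of Theorem~\ref{mult-thm} against the corresponding two-fold application of the standard formula $c_s c_u = c_{su}+\sum_{v:\,sv<v<u}\mu(v,u)c_v$. The key observation is that the extra factor of $(v+v^{-1})^2 = v^2+2+v^{-2}$ produced by the two-sided application (as compared to the single $(v+v^{-1})$ appearing in Theorem~\ref{mult-thm}) contributes exactly an even discrepancy, and the congruence $\mu^\sigma(y,w)\equiv \mu(y,w)\pmod 2$ --- itself a formal consequence of Proposition~\ref{p-parity} --- reconciles the $\mu$- and $\msig$-terms on both sides.

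The main obstacle will be controlling the coefficients of $c_z$ for $z \notin \I$ that appear in $c_{x'}c_y c_{(x'^*)^{-1}}$ during the induction: although these are not part of the target congruence, they can feed into the final $z\in \I$ position through the outer sandwich $c_s(\cdot)c_{s^*}$. To handle them I would exploit the symmetry $\wt h_{x,y;z}=\wt h_{x,y;(z^*)^{-1}}$ --- which follows from the invariance of $c_x c_y c_{(x^*)^{-1}}$ under the anti-involution $c_w\mapsto c_{(w^*)^{-1}}$ combined with $y\in\I$ --- to show that contributions from off-diagonal $z \notin \I$ come in pairs $\{z,(z^*)^{-1}\}$ whose total contribution to any $c_w$ with $w\in\I$ is divisible by $2$, thereby leaving intact the desired congruence for $z\in\I$.
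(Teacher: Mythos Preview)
Your inductive framework is sound and genuinely different from the paper's argument. The paper does not induct on $\ell(x)$; instead it introduces the $\cA$-linear projection $\proj:\cH_q\to\cM_{q^2}$ sending $t_w\mapsto a_w$ for $w\in\I$ and $t_w\mapsto 0$ otherwise, and then computes $\proj(c_xc_yc_{x^\dag})$ in two ways. One way uses the $\dag$-pairing on the $c$-basis (exactly your symmetry idea) together with Proposition~\ref{p-parity} to get $\sum_{z\in\I}\wt h_{x,y;z}A_z\pmod 2$. The other expands $c_xc_yc_{x^\dag}$ in the $t$-basis, pairs off the $\dag$-unfixed summands, and invokes Lusztig's identity $\proj(t_xt_yt_x^{\dag})\equiv T_xa_y\pmod 2$ (valid for \emph{all} $x$, cited as a black box from \cite{LV2}) to obtain $C_xA_y\pmod 2$. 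So the paper trades your induction for a single global appeal to Lusztig's $t$-basis lemma.

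Your handling of the inductive step is correct: the cross terms $c_uc_yc_{v^\dag}$ with $u\neq v$ and the middle terms $c_sc_{x'}c_yc_{u^\dag}$ really do pair under $\dag$ and contribute evenly to any $c_z$ with $z\in\I$, and the diagonal terms $\mu(u,x')^2\,c_uc_yc_{u^\dag}$ match $\mu(u,x')\,C_uA_y$ modulo $2$ by induction. The off-$\I$ part of $c_{x'}c_yc_{x'^\dag}$ likewise disappears modulo $2$ under the outer $c_s(\cdot)c_{s^*}$ by the same pairing.

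The gap is in your base case. You propose to verify the $\ell(x)=1$ parity lemma by matching Theorem~\ref{mult-thm} against a two-fold application of $c_sc_u=c_{su}+\sum\mu(v,u)c_v$, citing $(v+v^{-1})^2\equiv q+q^{-1}$ and $\mu^\sigma\equiv\mu\pmod 2$. Those observations handle the case $s\in\Des(y)$ and the odd-parity summands $\mu^\sigma(z,y)(v+v^{-1})$, but they do not address the even-parity coefficients $m^\sigma(z\xrightarrow{s}y)=\mu^\sigma(z,y;s)$, which involve $\nu^\sigma(z,y)$, two $\delta$-correction terms, and a quadratic sum $\sum_x\mu^\sigma(z,x)\mu^\sigma(x,y)$. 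Matching these against the many cross-terms in $c_s c_y c_{s^*}$ at the $c$-basis level is substantially more than a four-case check, and your sketch does not indicate how it goes through.

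The clean fix is to prove the base case at the $t$-basis level instead: the identity $\proj(t_st_yt_{s^*})\equiv T_sa_y\pmod 2$ for $y\in\I$ genuinely \emph{is} a direct four-case computation from \eqref{module-def}, and combining it with $\proj(t_st_y+t_yt_{s^*})\equiv 0\pmod 2$ (another easy case check for $y\in\I$) and Proposition~\ref{p-parity} yields your parity lemma. Once you do this, however, your argument is essentially the paper's proof reorganized inductively rather than done all at once.
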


\begin{proof}
In what follows it is helpful to recall  that we denote the bases of $\cH_q$ using the lower case symbols $t_w$, $c_w$  
and the bases of $\cH_{q^2}$ using the upper case symbols $T_w$, $C_w$.  
 Let $w^\dag = {w^*}^{-1}$ for $w \in W$ and 
define $h \mapsto h^\dag$ as the unique $\cA$-algebra anti-automorphism of $\cH_q$ such that $(t_w)^\dag = t_{w^\dag}$. Also write $\proj : \cH_q \to \cM_{q^2}$ for the $\cA$-linear map with $t_w \mapsto a_w$ for $w \in \I$ and $t_w\mapsto 0$ for $w \in W \setminus \I$.

We write $m \equiv m' \modu 2)$ for  $m,m' \in \cM_{q^2}$ if  $m-m' = 2m''$ for some $m'' \in \cM_{q^2}$. With respect to this notation,
Lusztig  \cite[9.4(a)]{LV2} proves that
\be\label{lus-lem} \proj\(t_x t_y (t_{x})^\dag\)  \equiv T_{x} a_y  \modu 2)\qquad
\text{for all $x \in W$ and $y \in \I$.}\ee
The current proposition derives from this fact 
 in the following way. Let $x \in W$ and $y \in \I$ and 
note that $(c_w)^\dag = c_{w^\dag}$ 
 by Lemma \ref{kl-cor}.  
The anti-automorphism $\dag$ consequently preserves $c_x c_y c_{x^\dag}$, so we must have $\wt h_{x,y,z} = \wt h_{x,y,z^\dag}$ for all $z \in W$ and it follows that we can write 
$c_x c_y c_{x^\dag} = (a+a^\dag) + \sum_{z \in \I} \wt h_{x,y,z} c_z$
for an element $a \in \cH_q$.   
Since $\proj(a+a^\dag) \equiv 0 \modu 2)$ and since $\proj(c_z) \equiv A_z \modu 2)$ for  $z \in \I$ by Proposition \ref{p-parity}, we deduce that
\be\label{2.2a} \proj(c_x c_y c_{x^\dag}) \equiv  \sum_{z \in \I} \wt h_{x,y,z} A_z \modu 2).\ee
On the other hand,  by definition 
$ c_x c_y c_{x^\dag} =   v^{-2\ell(x)-\ell(y)}\sum_{x',z,x'' \in W} P_{x',x}  P_{z,y} P_{x'',x} \cdot t_{x'}t_{z} (t_{x''})^\dag 
.$
Since $P_{z,y} = P_{z^\dag,y}$ for all $z \in W$ as $y = y^\dag$, the anti-automorphism $\dag$ acts on the latter sum by exchanging the summands indexed by $(x',z,x'')$ and $(x'',z^\dag,x')$. It follows by
dividing the sum $\sum_{x',z,x'' \in W} $ into two parts, consisting of the summands  fixed and unfixed by $\dag$, that 
we can write
\[ c_x c_y c_{x^\dag} =  (b+b^\dag) + v^{-2\ell(x)-\ell(y)}\sum_{x' \in W} \sum_{z \in \I} (P_{x',x})^2 P_{z,y}  \cdot t_{x'} t_{z} (t_{x'})^\dag 
.
\]
for another element $b \in \cH_q$.
Since $\proj(b+b^\dag) \equiv 0 \modu 2)$ and $(P_{x',x})^2 \equiv P_{x',x}(q^2) \modu 2)$ and  $P_{z,y} \equiv \Psig_{z,y} \modu 2)$ for $y,z \in \I$,
applying \eqref{lus-lem} to the preceding equation shows that 
\be\label{2.2b}\proj\(c_x c_y c_{x^\dag}\) \equiv C_{x} A_y \modu 2)\qquad\text{for all $x \in W$ and $y \in \I$}.\ee
The proposition now follows immediately by combining \eqref{2.2a} and \eqref{2.2b}.
\end{proof}

\section{Positivity results for universal Coxeter systems}\label{3sect}

In this section we prove our main results. Thus, for the duration  we let $(W,S)$ denote a fixed universal Coxeter system and  we let $*$ denote a fixed $S$-preserving involution of $W$. 
%
It is helpful to recall that the involution $*$ of $W$   corresponds to an arbitrary choice of a permutation with order $\leq 2$ of the set $S$.
The twisted involutions $w \in \I = \{ x \in W : x^{-1} = x^*\} $  each take one of two possible forms:
\begin{itemize}
\item If $\ell(w)$ is even then $w=x^{*}x^{-1}$ for some $x \in W$.
\item If $\ell(w)$ is odd then $w=x^{*}sx^{-1}$ for some $x \in W $ and $s \in S$ with $s = s^*$. 
\end{itemize}
 The following observation enumerates a few other special properties of universal Coxeter systems which make them
 tractable test cases for  general questions and conjectures. 
Recall here the definition of  $s\act w$ from \eqref{act-def}.

\begin{observation} Assume $(W,S)$ is a universal Coxeter system.
\ben
\item[(a)] Each $w \in W$ has a unique reduced expression.
\item[(b)] Each $w \in \I$ has a unique reduced $\I$-expression.
\item[(c)] If $w \in W \setminus\{1\}$ then $|\Des(w)| = |\mathrm{Des}_R(w)|= 1$.
\item[(d)] The map $S \times \I \to \I$ given by 
$ (s,w) \mapsto s\act  w$
%
  extends to a group action of $W$ on $\I$.
\een 
\end{observation}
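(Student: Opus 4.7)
The plan is to establish the four claims in the order given, each building on its predecessors.

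First, for part (a), I would appeal to Matsumoto's theorem: any two reduced expressions for $w \in W$ are connected by a sequence of braid moves. In a universal Coxeter system the product $st$ has infinite order for every pair of distinct $s,t \in S$, so there are no nontrivial braid relations, hence no nontrivial braid moves, forcing the reduced expression to be unique. (Equivalently, as already recalled in the introduction, $W$ is in bijection with the set of words in $S$ having no adjacent repetitions, with multiplication by concatenation modulo cancellation of adjacent equal letters.)

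Part (c) is then immediate from (a): a simple generator $s$ lies in $\Des(w)$ precisely when some reduced expression for $w$ begins with $s$, so for $w \neq 1$ the left descent set is the singleton consisting of the first letter of the unique reduced expression of $w$. The argument for $\mathrm{Des}_R(w)$ is symmetric, using the last letter.

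For part (b) I would proceed by induction on the rank $\rho(w)$ supplied by Theorem-Definition \ref{rho-def}. The base case $\rho(w) = 0$ is vacuous. For the inductive step, given two reduced $\I$-expressions $(s_1, \ldots, s_k)$ and $(t_1, \ldots, t_k)$ for the same $w$, Theorem-Definition \ref{rho-def} forces $\ell(s_1 w) = \ell(w) - 1$ and $\ell(t_1 w) = \ell(w) - 1$, so $s_1, t_1 \in \Des(w)$; part (c) then gives $s_1 = t_1$. The tails $(s_2, \ldots, s_k)$ and $(t_2, \ldots, t_k)$ are therefore reduced $\I$-expressions of length $k-1$ for $s_1 \act w$, and induction completes the argument.

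Finally, for part (d) I would invoke the universal property of free products. Since $(W, S)$ is universal with defining relations $s^2 = 1$ for $s \in S$, the group $W$ is the free product of the order-two subgroups $\langle s \rangle$. By that universal property, giving a $W$-action on $\I$ is the same as specifying, for each $s \in S$, an involution of $\I$. The identity $s \act (s \act w) = w$ already recorded in the introduction is exactly the statement that $w \mapsto s \act w$ is such an involution, so these maps assemble into a $W$-action extending $\act$. I do not anticipate any serious obstacle here: the inductive step in (b) is the only point requiring more than a sentence, and even that is routine once (a) and (c) are in hand.
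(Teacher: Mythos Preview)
Your proof is correct. The paper itself leaves this observation unproved, treating the four properties as elementary facts about universal Coxeter systems; your argument fills in the details in the natural way (Matsumoto for (a), immediate consequences for (c), induction on $\rho$ using (c) for (b), and the universal property of the free product for (d)), and there are no gaps.
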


\begin{notation}
In light of part (d), it is well-defined to 
set
$ x \act w \omdef= s_1 \act (s_2 \act (\cdots \act (s_n \act w)\cdots ))$
where $x \in W$ and $w \in \I$ and $s_i \in S$ are such that  $x=s_1s_2\cdots s_n$.  
\end{notation}

Before proceeding, we note as
a   consequence of our observation that in the special case that  $*$ has no fixed points in $S$, one can view the ordinary Kazhdan-Lusztig theory 
of a universal Coxeter system as a special case of its twisted theory, in the following way 

\begin{observation} Suppose $(W,S)$ is a universal Coxeter system and $s\neq s^*$ for all $ s \in S$.
Then the unique $\cA$-linear map $\cH_{q^2}\to \cM_{q^2}$ with $T_{w^{*}} \mapsto a_{w^{*} w^{-1}}$ for  $w\in W$ defines an isomorphism of left $\cH_{q^2}$-modules which commutes with the bar operators of $\cH_{q^2}$ and $\cM_{q^2}$, 
and
%
consequently
\[\Psig_{(y^{*}y^{-1}), (w^{*}w^{-1})}= P_{y,w}(q^2)
\quand 
h^\sigma_{x,\hs (y^{*}y^{-1});\hs (z^{*}z^{-1})} = h_{x,y^{*};z^{*}}(v^2)
\qquad\text{for all $w,x,y,z \in W$.}
\]
\end{observation}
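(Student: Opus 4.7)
My plan is to identify the map $\phi \colon \cH_{q^2} \to \cM_{q^2}$ in the statement with the left $\cH_{q^2}$-module map $\psi \colon h \mapsto h \cdot a_1$. Once $\phi = \psi$ is established, both the isomorphism and bar-compatibility claims are automatic, and the two polynomial identities follow from the uniqueness parts of Theorem-Definition \ref{twisted-thmdef} together with a standard compatibility between $\cH_q$ and $\cH_{q^2}$.

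First I would check that $u \mapsto u(u^*)^{-1}$ is a bijection $W \to \I$. Injectivity follows from the fact that the only fixed point of $*$ in $W$ is the identity: by uniqueness of reduced expressions, $u^* = u$ forces every simple generator in the reduced expression of $u$ to be fixed by $*$, and by hypothesis no such generator exists. Surjectivity is immediate from the paper's classification of twisted involutions, since the hypothesis rules out the odd-length case and every even-length element of $\I$ already has the form $u(u^*)^{-1}$. Consequently $\phi$ is an $\cA$-linear bijection between the $T$- and $a$-bases.

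The key computation is $\psi(T_u) = T_u \cdot a_1 = a_{u(u^*)^{-1}}$ for all $u \in W$, proved by induction along a reduced expression $u = s_1 \cdots s_n$. The decisive simplification is that cases 2 and 3 of the multiplication rule \eqref{module-def} never apply here: the equation $sv = vs^*$ for $v \in \I$ is equivalent to $svs^* = v$, and a short case analysis on the unique reduced expression of $v$ using $s \neq s^*$ shows this has no solutions. Combined with the identity $\ell(x(x^*)^{-1}) = 2\ell(x)$ for all $x \in W$ (which holds since no cancellations occur in the word $s_1 \cdots s_n s_n^* \cdots s_1^*$), this ensures that at each step of the induction we land in the ascending case~1 of \eqref{module-def}, and the inductive output is the expected basis element. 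This identifies $\psi$ with $\phi$, making $\phi$ a left $\cH_{q^2}$-module isomorphism. Bar-compatibility is then formal: $\overline{\phi(h)} = \overline{h \cdot a_1} = \overline{h} \cdot a_1 = \phi(\overline{h})$, using $\overline{a_1} = a_1$.

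Given $\phi$, the polynomial identities drop out. The element $\phi(C_{w^*}) = C_{w^*} \cdot a_1$ is bar-invariant, and using the degree bounds on $P_{y,w^*}(q^2)$ together with $\ell(w^*w^{-1}) = 2\ell(w^*)$ one verifies that its expansion in $(a_{v'})_{v' \in \I}$ meets the characterizing conditions (b) and (c) of Theorem-Definition \ref{twisted-thmdef} for $A_{w^*w^{-1}}$; uniqueness then forces $\phi(C_{w^*}) = A_{w^*w^{-1}}$, and comparing coefficients together with Lemma \ref{kl-cor}(b) yields $\Psig_{y^*y^{-1},\,w^*w^{-1}}(q) = P_{y,w}(q^2)$. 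For the structure constants I would use the $\ZZ$-algebra specialization $\cH_q \to \cH_{q^2}$ sending $t_w \mapsto T_w$ and $v \mapsto v^2$ (under which $c_w$ maps to $C_w$) to rewrite $C_x C_{y^*} = \sum_{z \in W} h_{x,y^*;z}(v^2)\, C_z$ inside $\cH_{q^2}$, then apply $\phi$ and compare with $C_x A_{y^*y^{-1}} = \sum_{v' \in \I} h^\sigma_{x,y^*y^{-1};v'}\, A_{v'}$ via the bijection $z \mapsto z(z^*)^{-1}$ to deduce $h^\sigma_{x,\,y^*y^{-1};\,z^*z^{-1}} = h_{x,y^*;z^*}(v^2)$. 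The main obstacle is the inductive module identity $T_u \cdot a_1 = a_{u(u^*)^{-1}}$: this is the one step that genuinely invokes both universality and the absence of simple generators fixed by $*$, and after it is in hand the rest of the argument reduces to uniqueness of Kazhdan-Lusztig bases and bookkeeping.
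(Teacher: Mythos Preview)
Your proposal is correct and is precisely the ``straightforward exercise'' the paper alludes to in its two-sentence proof; the paper only records that $w^* \mapsto w^*w^{-1}$ is a poset isomorphism $(W,\leq)\xrightarrow{\sim}(\I,\leq)$ and invokes Theorem-Definitions \ref{kl-thmdef}, \ref{lv-thmdef}, \ref{twisted-thmdef}. Your identification of $\phi$ with the map $h\mapsto h\cdot a_1$ is a clean way to make the module-map and bar-compatibility claims automatic, and your inductive verification that $T_u\cdot a_1=a_{u(u^*)^{-1}}$ (using that cases 2 and 3 of \eqref{module-def} never occur since $svs^*=v$ has no solutions when $s\neq s^*$) is exactly the content of the exercise.
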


\begin{proof}
If 
$s\neq s^*$ for all $ s \in S$
then
every $w \in \I$ has even length and the map $w^{*} \mapsto w^{*}w^{-1}$ defines a poset isomorphism $(W,\leq) \xrightarrow{\sim} (\I,\leq)$. From this, the proof of the proposition is a straightforward  exercise using  
Theorem-Definitions \ref{kl-thmdef}, \ref{lv-thmdef}, and \ref{twisted-thmdef}. 
%
\end{proof}


%

\subsection{Kazhdan-Lusztig polynomials}\label{3a-sect}

Dyer derived  a formula for the Kazhdan-Lusztig polynomials of a universal Coxeter system \cite[Theorem 3.8]{Dyer} which shows their coefficients to be nonnegative. We review the key parts of this result here.
To begin, 
we note the following lemma which is a special case of \cite[Lemma 3.5]{Dyer}. 
 
 \begin{lemma}[Dyer \cite{Dyer}] \label{dyer-lem} 
  Assume $(W,S)$ is a universal Coxeter system.  Suppose $y,w \in W$ and $r,s \in S$ such that  $rsw<sw<w$ and $sy>y$.
Then  
\[P_{ y, w} =
 P_{ y,s w} + qP_{s  y,s w}  
-
\delta \cdot qP_{  y,rsw}\qquad\text{where }\delta = |\{ s\} \cap \Des(rsw)|.\]

\end{lemma}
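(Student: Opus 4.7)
The plan is to derive the identity by applying the standard Kazhdan--Lusztig recurrence for $P_{y,w}$ and then using the rigid combinatorics of a universal Coxeter system to evaluate the resulting $\mu$-sum. Because $s \in \Des(w)$ (from $sw < w$) and $sy > y$, equating coefficients of $t_y$ on the two sides of the classical identity $c_w = c_s\, c_{sw} - \sum_{z < sw,\; sz<z} \mu(z,sw)\, c_z$ in $\cH_q$ produces
\[
P_{y,w} \;=\; P_{y, sw} + q\, P_{sy, sw} \;-\; \sum_{\substack{z < sw \\ sz < z}} \mu(z, sw)\, q^{(\ell(w)-\ell(z))/2}\, P_{y,z},
\]
so the lemma is equivalent to the claim that this last sum equals $\delta \cdot q \cdot P_{y, rsw}$.

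Next I would identify which $z$ are admissible using the observation just before Section~\ref{3a-sect}: in a universal Coxeter system each element has a unique reduced expression and a unique left descent, and Bruhat order coincides with the gap-subword relation. Since $\Des(sw) = \{r\}$ with $r \neq s$, the unique reduced expression of $sw$ is $r$ prepended to that of $rsw$. The condition $sz < z$ forces $\Des(z) = \{s\}$, so the reduced expression of $z$ starts with $s$; combined with $z \leq sw$ and $r \neq s$, this initial $s$ must be the very first letter of $rsw$, which already forces $s \in \Des(rsw)$. Hence when $\delta = 0$ the sum is empty and the lemma holds trivially.

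In the remaining case $\delta = 1$, the term $z = rsw$ satisfies all the constraints and contributes $\mu(rsw, sw) \cdot q^{(\ell(w)-\ell(rsw))/2} \cdot P_{y, rsw} = 1 \cdot q \cdot P_{y, rsw}$, where $\mu(rsw,sw) = 1$ follows from the length difference being one together with $P_{rsw,sw} = 1$. The main obstacle will be ruling out every other admissible $z$: such a $z$ is a proper gap subword of the reduced expression of $sw$ that begins at the initial letter of $rsw$ and drops additional letters. For these, one needs $P_{z, sw}$ to have degree strictly smaller than $(\ell(sw) - \ell(z) - 1)/2$, forcing $\mu(z, sw) = 0$. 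I would dispatch this by invoking Dyer's explicit monomial description of Kazhdan--Lusztig polynomials for universal Coxeter systems (the content of Theorem~\ref{dyer-thm1} stated below in the paper), which expresses each such $P_{z, sw}$ as a power of $q$ whose exponent is directly computable from the relative reduced expressions and is visibly too small to saturate the degree bound in Theorem-Definition~\ref{kl-thmdef}(c). With this vanishing in hand the recurrence collapses to exactly the identity claimed.
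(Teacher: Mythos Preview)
Your overall strategy---apply the standard Kazhdan--Lusztig recurrence
\[
P_{y,w} = P_{y,sw} + qP_{sy,sw} - \sum_{\substack{z<sw\\ sz<z}} \mu(z,sw)\,q^{(\ell(w)-\ell(z))/2}\,P_{y,z}
\]
and show the sum collapses to $\delta\cdot q\cdot P_{y,rsw}$---is exactly the right one, and is essentially how the result is obtained (the paper itself does not reprove it, citing Dyer). But your execution has two real problems.

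First, your argument for the case $\delta=0$ is incorrect. You claim that if $z\leq sw$ and $\Des(z)=\{s\}$, then the initial $s$ of $z$ must match the first letter of $rsw$, forcing $s\in\Des(rsw)$. That is false: a subword embedding may send the leading $s$ of $z$ to any later occurrence of $s$ in the reduced word for $sw$. For instance if $sw=rts\cdots$ with $t\neq s$, then $z=s$ satisfies $z\leq sw$ and $sz<z$, yet $\Des(rsw)=\{t\}$. So the index set need not be empty when $\delta=0$; you must still show each $\mu(z,sw)$ vanishes. Second, your plan to kill the remaining terms by invoking ``Dyer's explicit monomial description \dots\ (the content of Theorem~\ref{dyer-thm1})'' is both a misidentification and circular: Theorem~\ref{dyer-thm1} in this paper is the monotonicity statement $P_{y,w}-P_{z,w}\in\NN[q]$, not an explicit formula, and in any case it is deduced (via Proposition~\ref{dyer-cor2}) from the very lemma you are trying to prove.

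Both issues dissolve with one elementary step you are missing. For any $z$ in the sum we have $\Des(z)=\{s\}\neq\{r\}$, so $rz>z$, while $r\in\Des(sw)$; hence $P_{z,sw}=P_{rz,sw}$ by Lemma~\ref{kl-cor}(a). If $rz\neq sw$ then $rz<sw$ and the degree bound in Theorem-Definition~\ref{kl-thmdef}(c) gives $\deg P_{z,sw}=\deg P_{rz,sw}\leq\tfrac{1}{2}(\ell(sw)-\ell(z)-2)$, so $\mu(z,sw)=0$. Thus the only possible contributor is $z=rsw$, which appears in the sum exactly when $s\in\Des(rsw)$, i.e.\ when $\delta=1$; and then $\mu(rsw,sw)=1$ and $q^{(\ell(w)-\ell(rsw))/2}=q$, giving the claimed identity.
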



In the  sequel we adopt the following notation.
 Given $y,z,w \in W$ with $y \leq z$,  define 
\be\label{yzw} P_{y,z;w} \omdef= P_{y,w} - P_{z,w}.\ee
%
%
%
 We expand upon the previous lemma with the following statement.
 
 \begin{proposition}\label{dyer-cor2}
  Assume $(W,S)$ is universal.
 Let $y,z \in W$ with $y \leq z$ and suppose 
 \begin{itemize}
 \item $k$ is a positive integer;
 \item $r,s \in S$ such that $r\neq s$ and $s \notin \Des(y)$ and $s\notin \Des(z)$;
 \item $u \in W$ such that $\{ r,s \} \cap \Des(u) = \varnothing$.
 \end{itemize}
If $a,w \in W$ are defined by
 \[ w = \underbrace{srsrs\cdots}_{k+1\text{ factors}}   u
 \qquand
 a = \underbrace{\cdots srsrs}_{k\text{ factors}}
 \] 
then 
$P_{y,z;w} = P_{y,z;sw} + q^k P_{ay,az;aw}$.

 \end{proposition}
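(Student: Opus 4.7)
The plan is to prove the stronger asymmetric identity
\[ P_{y,w} \;=\; P_{y,sw} + q^k P_{ay,aw} \]
for every $y \in W$ with $s \notin \Des(y)$; the proposition then follows by subtracting this identity for $y$ from the same identity for $z$. I would proceed by induction on $k$.

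Before starting the induction I would record some simple combinatorics: since $a$ is the reverse of the first $k$ letters of the alternating prefix of $w$, and since $\{r,s\} \cap \Des(u) = \varnothing$, the product $aw$ reduces to $w$ with its first $k$ letters deleted, so that $\ell(aw) = \ell(w)-k$; similarly $ay$ is reduced with $\ell(ay) = \ell(y)+k$. For the base case $k=1$, we have $a = s$, so $aw = sw$, and Lemma~\ref{dyer-lem} applied to $y, w, r, s$ has $rsw = u$, hence $\delta = |\{s\} \cap \Des(u)| = 0$; its conclusion therefore reduces to $P_{y,w} = P_{y,sw} + qP_{sy,sw}$, which is exactly the desired identity.

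For the inductive step, Lemma~\ref{dyer-lem} still applies to $(y,w,r,s)$, but now $rsw$ begins with $s$ (because $w$ begins with an alternating word of length $k+1 \geq 3$), so $\delta = 1$ and
\[ P_{y,w} \;=\; P_{y,sw} + q P_{sy,sw} - q P_{y,rsw}. \]
The word $sw$ has the same shape as $w$ but with $r$ and $s$ interchanged and with the alternating prefix shortened to $k = (k-1)+1$ letters. Since $r \neq s$ and $sy$ begins with $s$, we have $r \notin \Des(sy)$, so the inductive hypothesis applies with the new data and yields $P_{sy,sw} = P_{sy,rsw} + q^{k-1}P_{a'sy,\, a'sw}$, where $a'$ is the $(k-1)$-letter reverse-alternating word ending in $r$. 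A direct check shows $a = a's$, so $a'sy = ay$ and $a'sw = aw$. Substituting, and then invoking Lemma~\ref{kl-cor}(a) to get $P_{sy,rsw} = P_{y,rsw}$ (valid since $s \in \Des(rsw)$), the two terms $\pm q P_{y,rsw}$ cancel and one is left with $P_{y,w} = P_{y,sw} + q^kP_{ay,aw}$, completing the induction.

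The main obstacle is purely bookkeeping: one must verify at each step that the hypotheses $rsw<sw<w$ and $sy > y$ of Lemma~\ref{dyer-lem} are satisfied, track carefully which of $r$ and $s$ plays which role after interchanging them in the induction, and confirm the identity $a = a's$ relating the old and new reverse-alternating words. None of this is conceptually subtle, but the alternating parity structure makes it easy to slip.
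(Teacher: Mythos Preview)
Your proof is correct and follows essentially the same induction on $k$ as the paper, using Lemma~\ref{dyer-lem} for the base case and for the inductive step, and Lemma~\ref{kl-cor}(a) to cancel the $\delta=1$ correction term. The only (cosmetic) difference is that you establish the pointwise identity $P_{y,w} = P_{y,sw} + q^k P_{ay,aw}$ and then subtract, whereas the paper works directly with the differences $P_{y,z;w}$ throughout; the underlying argument is identical.
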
 
 
\begin{remark} Applying the identity $P_{y,z;w} = P_{{y^*}^{-1},{z^*}^{-1};{w^*}^{-1}}$ from Lemma \ref{kl-cor} affords a right-handed version of this proposition, which will be of use in Section \ref{technical-sect} below.
\end{remark}
 
 \begin{proof} Note  that  $y \leq z$ implies  $ay \leq az$, since (as $sy>y$ and $sz>z$) the unique reduced expression for $ay$ (respectively, $az$) is formed    by concatenating $\cdots srsrs$ to the unique reduced expression for $y$ (respectively, $z$). 
  %
To prove the lemma, we proceed by induction on $k$.  
 If $k=1$ then the lemma reduces to Lemma \ref{dyer-lem}. If $k>1$, then since $P_{sy,sz;rsw} = P_{y,z;rsw}$ by Lemma \ref{kl-cor}, Lemma \ref{dyer-lem}  asserts that
$ P_{y,z;w} = P_{y,z;sw} + q(P_{sy,sz;sw} - P_{sy,sz;rsw}).$
 By induction we may assume that
$P_{sy,sz;sw} = P_{sy,sz;rsw} + q^{k-1} P_{ay,az;aw}$; substituting this identity into the preceding equation  gives the desired recurrence.
  \end{proof}

From the last lemma we have an easy proof of Conjecture \descref{B} (and so also of Conjecture \descref{A}) for universal Coxeter systems. This result can also be deduced from  \cite[Theorem 3.8]{Dyer}. 

 \begin{theorem}[Dyer \cite{Dyer}]\label{dyer-thm1}
If $(W,S)$ is a universal Coxeter system, then the polynomial $P_{y,w} - P_{z,w}$ has nonnegative integer coefficients for all $y,z,w \in W$ with $y \leq z$ in the Bruhat order. 
In particular, we have $P_{y,w} \in \NN[q]$ for each $y,w \in W$. 
 \end{theorem}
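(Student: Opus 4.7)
The plan is to prove the stronger claim that $P_{y,w} - P_{z,w} \in \NN[q]$ for all $y \leq z$ in $W$ by a double induction---primary on $\ell(w)$, secondary on $\ell(y) + \ell(z)$---in the case $y \leq z \leq w$.  The remaining configurations follow as corollaries: if $y \not\leq w$ then $P_{y,w} = P_{z,w} = 0$, while if $y \leq w$ but $z \not\leq w$ then $P_{y,z;w} = P_{y,w}$, which I recover by specializing the main claim to $z = w$ (giving $P_{y,w} - 1 \in \NN[q]$, hence $P_{y,w} \in \NN[q]$).  This last observation is also the ``in particular'' assertion.

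The base cases $\ell(w) \in \{0,1\}$ are trivial, since $P_{\cdot,w}$ takes only the values $0$ and $1$ there.  For $\ell(w) \geq 2$, let $s$ denote the unique element of $\Des(w)$.  Whenever $s \in \Des(y) \cup \Des(z)$, I apply Lemma \ref{kl-cor}(a) to replace $y$ by $sy$ if $s \in \Des(y)$ and $z$ by $sz$ if $s \in \Des(z)$.  The resulting pair remains Bruhat-comparable: $sy \leq sz$ when $s \in \Des(y) \cap \Des(z)$ (lifting property applied to $y \leq z$), $y \leq sz$ when $s \in \Des(z) \setminus \Des(y)$ (lifting again), and $sy \leq z$ when $s \in \Des(y) \setminus \Des(z)$ (immediate from $sy \leq y \leq z$).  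In every subcase $\ell(y) + \ell(z)$ drops strictly, so the secondary induction applies.

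The interesting case is $s \notin \Des(y) \cup \Des(z)$.  Let $k + 1$ be the length of the maximal alternating prefix of the (unique) reduced expression of $w$, and write $w = \underbrace{srsrs\cdots}_{k+1\text{ factors}} u$ with $r$ the second letter of $w$ and $\{r,s\} \cap \Des(u) = \varnothing$; since $\ell(w) \geq 2$ this forces $k \geq 1$.  Proposition \ref{dyer-cor2} then gives
\[
P_{y,z;w} \;=\; P_{y,z;sw} \;+\; q^k\, P_{ay,\, az;\, aw}.
\]
Both summands have strictly smaller primary parameter: $\ell(sw) = \ell(w) - 1$, while unwinding the successive cancellations in the product $a \cdot w$ (valid in a universal Coxeter system) gives $\ell(aw) = \ell(w) - k$.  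Moreover $ay \leq az$, because the hypothesis $s \notin \Des(y) \cup \Des(z)$ guarantees that left multiplication by $a$ prepends its unique reduced expression without cancellation, preserving the subword relation characterizing the Bruhat order.  The primary induction hypothesis therefore places both summands in $\NN[q]$.

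The main obstacle, such as it is, is simply the bookkeeping of Bruhat comparisons across the substitutions $y \mapsto sy$, $z \mapsto sz$, and $(y,z,w) \mapsto (ay,az,aw)$, together with the length calculation for $aw$.  Each of these is elementary and leans on either the lifting property or the transparency of the subword order granted by the uniqueness of reduced expressions; beyond this, Proposition \ref{dyer-cor2} and Lemma \ref{kl-cor}(a) drive the induction mechanically.
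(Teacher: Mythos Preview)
Your proof is correct and follows essentially the same route as the paper's: induct on $\ell(w)$, use Lemma~\ref{kl-cor}(a) to reduce to the case $s\notin\Des(y)\cup\Des(z)$, then apply Proposition~\ref{dyer-cor2}. The only difference is organizational: you thread the descent reduction through a secondary induction on $\ell(y)+\ell(z)$, while the paper simply passes in one step to the unique pair $(y',z')\in\{(y,z),(sy,z),(y,sz),(sy,sz)\}$ with $s\notin\Des(y')\cup\Des(z')$ and invokes $P_{y,z;w}=P_{y',z';w}$ directly.
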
 
 
 \begin{proof} The proof is by induction on $\ell(w)$. If $\ell(w) \leq 1$ then $P_{y,z;w} \in \{0,1\} \subset \NN[q]$ by Lemma \ref{kl-cor}.
 Assume $\ell(w) \geq 2$ so that there exists $s \in \Des(w)$. Let $(y',z') $ be the unique pair in the set $ \{ (y,z), (sy,z), (y,sz), (sy,sz) \}$ which has $s \notin \Des(y')$ and $s \notin \Des(z')$. It is straightforward to check that $y' \leq z'$, and by Lemma \ref{kl-cor} we have $P_{y,z;w} = P_{y',z';w}$.
%
Proposition \ref{dyer-cor2} applied $P_{y',z';w}$ shows that $P_{y,z;w} \in \NN[q]$ by induction, and it follows that $P_{y,w} \in \NN[q]$ since $P_{w,w} = 1$. \end{proof}

 \subsection{Twisted Kazhdan-Lusztig polynomials}\label{3b-sect}

%
%

%
%

Here we initiate the proof of Conjecture \descref{B$'$} for universal Coxeter systems, to be completed in the next section. As above,  $(W,S)$ is a fixed Universal Coxeter system with a fixed $S$-preserving involution $*$.

Recall the definition of the Laurent polynomial $\msig(y\xrightarrow{s}w) \in \cA$ from \eqref{msig}. 

\begin{lemma} \label{msig-lem} Assume $(W,S)$ is a universal Coxeter system. If $y,w \in \I$ and $r,s \in S$ such that $y\leq w$ and
$\Des(y) = \{s\} \neq \{r\} = \Des(w)$, then \[\msig(y\xrightarrow{s}w) = \begin{cases} 1&\text{if $y =rwr^*$ or if $(y,w) = (s,r)$} \\0&\text{otherwise}.\end{cases}\]

%
%
%
%
%
\end{lemma}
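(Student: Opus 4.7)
The plan is to evaluate $\msig(y\xrightarrow{s}w)$ directly from its definition in (\ref{msig}), which splits on the parity of $\ell(w)-\ell(y)$. Before that, I would use Corollary~\ref{main-recurrence}(a) applied with the descent $r$ of $w$ (which is not a descent of $y$) to rewrite $\Psig_{y,w}=\Psig_{r\act y,w}$; this reduction lets one exchange $y$ for a longer element whose descent set now contains $r$, which will be useful in an inductive step on $\ell(w)$.

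In the odd-parity case, $\msig(y\xrightarrow{s}w)=\mu^\sigma(y,w)(v+v^{-1})$, so the task is to show $\mu^\sigma(y,w)=0$. This amounts to showing that $\Psig_{y,w}$ has strictly less than the maximum possible degree in $v$. I would prove this by induction on $\ell(w)$ using the recurrence in Corollary~\ref{main-recurrence}(b), applied with the descent $s\in\Des(y)$ transferred to $w$ via the identity above. The recurrence expresses $\Psig_{y,w}$ as a combination of $\Psig$ values for the shorter twisted involution $w'=s\act w$; the uniqueness of reduced expressions in the universal case, together with Theorem~\ref{bruhat-thm}, forces the leading coefficient of the combination to cancel.

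In the even-parity case, $\msig(y\xrightarrow{s}w)=\mu^\sigma(y,w;s)$, expanded as
\[
\mu^\sigma(y,w;s)=\nu^\sigma(y,w)+\delta_{sy,ys^*}\mu^\sigma(sy,w)-\delta_{sw,ws^*}\mu^\sigma(y,sw)-\sum_{x\in\I,\,sx<x}\mu^\sigma(y,x)\mu^\sigma(x,w).
\]
Each $\mu^\sigma$ in the three correction terms involves a length gap of opposite parity from $\ell(w)-\ell(y)$; the odd-parity vanishing from the previous step, applied with appropriate attention to the descent sets of $sy$, $sw$, and each intermediate $x$, eliminates all correction contributions. This leaves $\mu^\sigma(y,w;s)=\nu^\sigma(y,w)$, the coefficient of $v^{\ell(w)-\ell(y)-2}$ in $\Psig_{y,w}$. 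Corollary~\ref{lv-cor}(a) identifies this as $1$ precisely when $\ell(w)-\ell(y)=2$, while a bounded-degree argument (again inductive in $\ell(w)$) shows $\nu^\sigma(y,w)=0$ when $\ell(w)-\ell(y)>2$. The subword property (Theorem~\ref{bruhat-thm}), together with $\Des(y)=\{s\}$ and $\Des(w)=\{r\}$, then forces the unique length-two configuration to be $y=rwr^*$.

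The main obstacle is controlling the summation $\sum_{x\in\I,\,sx<x}\mu^\sigma(y,x)\mu^\sigma(x,w)$ in the even case: in principle many intermediate $x$ could contribute, and one must verify that each such factor vanishes (or that contributions cancel pairwise). Here the universal-case uniqueness of reduced expressions, combined with Theorem~\ref{bruhat-thm}, forces any such $x$ to correspond to a very specific subword of the reduced $\I$-expression for $w$, drastically limiting the possibilities and allowing a direct case analysis. Finally, the literal configuration $(y,w)=(s,r)$ is vacuous under the stated Bruhat hypothesis (since $s\leq r$ with $s,r$ simple forces $s=r$), and is included only as a formal degeneracy of the same formula that yields $1$ via the $\nu^\sigma$ contribution.
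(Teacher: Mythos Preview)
Your overall strategy matches the paper's: split on the parity of $\ell(w)-\ell(y)$, show $\mu^\sigma(y,w)=0$ in the odd case, and in the even case eliminate the correction terms to reduce $\mu^\sigma(y,w;s)$ to $\nu^\sigma(y,w)$. However, the execution diverges from the paper in a way worth noting, and there is one genuine slip.

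\medskip
\textbf{The paper avoids your inductions entirely.} For both the odd-parity vanishing and the $\nu^\sigma$ computation, the paper uses only Corollary~\ref{main-recurrence}(a) together with the built-in degree bound from Theorem-Definition~\ref{twisted-thmdef}(c). Concretely: since $r\in\Des(w)$ and $y\notin\{1,r\}$, one has $r\act y=ryr^*$ with $\ell(r\act y)=\ell(y)+2$, and $\Psig_{y,w}=\Psig_{r\act y,w}$. If $r\act y<w$, the degree bound on $\Psig_{r\act y,w}$ immediately forces $\mu^\sigma(y,w)=0$ and, when $\ell(w)-\ell(y)>2$, also $\nu^\sigma(y,w)=0$; the remaining cases ($r\act y=w$ or $r\act y\not\leq w$) are handled by inspection. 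No induction on $\ell(w)$ via the recurrence of Corollary~\ref{main-recurrence}(b) is needed. Your description also contains an error here: you write that the recurrence reduces to ``the shorter twisted involution $w'=s\act w$'', but $s\notin\Des(w)$, so $s\act w$ is \emph{longer} than $w$; presumably you meant $r\act w$.

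\medskip
\textbf{The correction term $\delta_{sy,ys^*}\mu^\sigma(sy,w)$ is not covered by your odd-parity result.} In the universal case, $sy=ys^*$ with $\Des(y)=\{s\}$ forces $y=s=s^*$, so $sy=1$ and $\Des(sy)=\varnothing$. Your odd-parity vanishing was established only for elements with left descent set $\{s\}$, so it does not apply to $\mu^\sigma(1,w)$. The paper handles this term separately (again via $\Psig_{1,w}=\Psig_{r\act 1,w}$ and a degree bound), and this is exactly where the $(y,w)=(s,r)$ clause of the formula originates. Your observation that $(y,w)=(s,r)$ is vacuous under the stated hypothesis $y\leq w$ is correct, and under that hypothesis one does obtain $\mu^\sigma(1,w)=0$; but you still owe an argument for this, since it is not an instance of the claim you have already proved.
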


\begin{proof}
First note that since $W$ is a universal Coxeter group and $y\notin \{1,r\}$ we have  $r \act y = ryr^*$ and $\ell(r \act y) = \ell(y) + 2$. In addition,  from Corollary \ref{main-recurrence} 
we have $\Psig_{y,w} = \Psig_{r\act y ,w}$.

We claim that  $\mu^\sigma(y,w) = 0$. To prove this, note that if $r \act y = w$ then   $\ell(w)-\ell(y)$ is even, and if $r \act y \not \leq w$ then $y \not < w$, so in either case $\mu^\sigma(y,w) = 0$. On other other hand, if  $r \act y < w$ then the degree of $\Psig_{ y,w}  = \Psig_{r\act y ,w}$ as a polynomial in $q$ is at most $\frac{\ell(w) - \ell(r\act y) - 1}{2} $ which is strictly less than $\frac{\ell(w)-\ell(y) - 1}{2}$, so again $\mu^\sigma(y,w) = 0$.

 It thus suffices to show that $ \mu^\sigma(y,w;s)= 1$  if $y =rwr^*$ or if $(y,w) = (s,r)$ and $ \mu^\sigma(y,w;s)= 0$ otherwise. To this end,  observe that if we apply our first claim to the definition of $\mu^\sigma(y,w;s)$, and also note that $sw\neq ws^*$ since $w \notin \{1,s\}$, we obtain 
 \[ \mu^\sigma(y,w;s) = \nu^\sigma(y,w) +  \delta_{sy,ys^*} \mu^\sigma(sy,w).\] If $ y = rwr^*$ then  $\Psig_{y,w} = \Psig_{w ,w} = 1 $ so $ \nu^\sigma(y,w) = 1$. Alternatively, if $y < w$ and $ y \neq rwr^*$ then it follows as above that $\Psig_{y,w}= \Psig_{r\act y,w}$ has degree strictly less than $\frac{\ell(w) - \ell(y) -2}{2}$ so $\nu^\sigma(y,w) = 0 $. Hence 
 \be\label{1x} \nu^\sigma(y,w) = \begin{cases} 1 & \text{if }y=rwr^* \\ 0&\text{otherwise}.\end{cases}\ee
 In turn, we have $sy= ys^*$ if and only if $y  =s $ (note that $y \neq 1$ by hypothesis), in which case $\mu^\sigma(sy,w)  = \mu^\sigma(1,w)$.  If $w =r $ then $\mu^\sigma(1,w) = 1$ and if $ w\neq r $ then either $w=rr^*$ (in which case $\ell(w) - \ell(1)$ is even) or $r \act 1 \neq w$ (in which case $\Psig_{1,w} = \Psig_{r \act 1,w}$ has degree strictly less than $\frac{\ell(w)-\ell(1)-1}{2}$) so $\mu^\sigma(1,w) = 0$. Thus 
 \be\label{2x} \delta_{sy,ys^*} \mu^\sigma(sy,w) = \begin{cases} 1 & \text{if $(y,w) = (s,r)$} \\ 0&\text{otherwise}.\end{cases}\ee
 Combining \eqref{1x} and \eqref{2x} gives the desired formula for $\mu^\sigma(y,w;s)$. 
  \end{proof}

We now have the following analogue of Lemma \ref{dyer-lem}.

\begin{lemma}\label{dyer-an} Assume $(W,S)$ is a universal Coxeter system.  Suppose $y,w \in \I$ and $r,s \in S$ such that  $rs\act w<s \act w<w$ and $sy>y$.
Then
\[
P_{ y, w}^\sigma =
P_{ y,s \act  w}^\sigma
+
 q^{2}P_{s\act  y,s \act w}^\sigma   
-
\delta \cdot q^2 P_{s\act  y, rs\act w}^\sigma.
+
\delta' \cdot q(P_{1,s \act w}^\sigma - P_{ s,s\act w}^\sigma ).
\] 
where we define
\[ \delta =\begin{cases} 1 & \text{if }s \in \Des(rs \act w) \\ 0 &\text{otherwise}\end{cases}
\qquand
 \delta' = \begin{cases} 1 & \text{if $y=1$ and $s=s^*$ and $w \neq srs$} \\ 0 &\text{otherwise}.\end{cases}
 \]
\end{lemma}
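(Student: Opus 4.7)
The plan is to deduce the identity by applying the recurrence of Corollary \ref{main-recurrence}. From $s \act w < w$ and Theorem-Definition \ref{rho-def} we get $s \in \Des(w)$; since $sy > y$, however, we first invoke part (a) to write $\Psig_{y, w} = \Psig_{y_1, w}$ with $y_1 = s \act y$ (which then has $s \in \Des(y_1)$), and apply part (b) to $(y_1, w)$. This yields
\[
(q+1)^c \Psig_{y,w} = (q+1)^d \Psig_{y, w'} + q(q-d) \Psig_{y_1, w'} - \sum_{\substack{z \in \I;\, sz<z\\ y_1 \leq z < w}} v^{\ell(w) - \ell(z) + c} \msig(z \xrightarrow{s} w') \Psig_{y_1, z},
\]
where $w' = s \act w$, $c = \delta_{sw, ws^*}$, and $d = \delta_{sy_1, y_1 s^*}$.

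In the universal case both $c$ and $d$ reduce to simple indicators. The hypothesis $rs \act w < s \act w$ rules out $w = s$ (else $rs\act w = r > 1 = s\act w$), and for $w \in \I$ with $s \in \Des(w)$ and $w \neq s$, uniqueness of reduced expressions in a universal Coxeter group forces $sw \neq ws^*$; thus $c = 0$, $w' = sws^*$, and $\ell(w') = \ell(w) - 2$. A parallel argument shows $d = 1$ exactly when $y = 1$ and $s = s^*$ (so $y_1 = s$), and $d = 0$ otherwise.

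The sum is then simplified via Lemma \ref{msig-lem}. The condition $sz < z$ combined with $|\Des(z)| = 1$ in the universal case gives $\Des(z) = \{s\}$, and the hypothesis $r \act w' < w'$ gives $\Des(w') = \{r\}$, so Lemma \ref{msig-lem} leaves only two possibly nonzero contributions: $z = rw'r^*$ and $(z, w') = (s, r)$. The first case $z = rw'r^* = rs \act w$ lands in the index set precisely when $s \in \Des(rs \act w)$, i.e.\ when $\delta = 1$; since then $\ell(w) - \ell(z) = 4$ and $c = 0$, this contributes $q^2 \Psig_{s\act y, rs\act w}$. The second case forces $w' = r$, hence $w = srs$ (with $s = s^*$ and $r = r^*$), and the membership constraint $y_1 \leq s$ then restricts to $y = 1$; this contributes $q \cdot \Psig_{s, s} = q$.

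Finally, I would combine these ingredients and match the claimed formula. When $d = 0$, both the second case above and $\delta'$ vanish automatically, and the identity follows. When $d = 1$ (so $y = 1$ and $y_1 = s$), rewriting $(q+1)\Psig_{1,w'} + q(q-1)\Psig_{s,w'} = \Psig_{1,w'} + q^2 \Psig_{s,w'} + q(\Psig_{1,w'} - \Psig_{s,w'})$ produces the desired $\delta'$ term whenever $w \neq srs$; when instead $w = srs$, the subtracted second-case contribution $q$ equals $q(\Psig_{1,r} - \Psig_{s,r})$ and cancels it, enforcing $\delta' = 0$. The main obstacle is precisely this cancellation at the boundary $w = srs$, which is the reason for the ``$w \neq srs$'' condition in the definition of $\delta'$.
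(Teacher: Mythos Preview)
Your proof is correct and follows essentially the same route as the paper's: both reduce to Corollary \ref{main-recurrence} (first invoking part (a) to replace $y$ by $s\act y$, then part (b)), observe that $c=0$ since $sw\neq ws^*$ in the universal case, evaluate the sum via Lemma \ref{msig-lem}, and handle the boundary case $w=srs$ by the same cancellation. The paper packages the latter with an auxiliary indicator $\delta'''$, but the substance is identical to your final paragraph.
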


 \begin{proof} Everything follows by combining Lemma \ref{msig-lem} with Corollary \ref{main-recurrence}. It is straightforward to check that the lemma holds if $y \not < w$, so assume $y < w$.
Let $\delta'' = \delta_{sy,ys^*}$ and  note by hypothesis that $sw \neq ws^*$. 
By Corollary \ref{main-recurrence} we  therefore have
 \be\label{thesum} 
\text{ \small
$ \Psig_{y,w} =  \Psig_{y,s\act w} + q^{2} \Psig_{s\act y, s\act w} 
 + 
 \delta''  q (\Psig_{y,s\act w} - \Psig_{s\act y, s \act w})
  - \sum_{\substack{z \in \I; \hs sz<z \\ y \leq z < w }} v^{\ell(w) - \ell(z)}  \msig(z \xrightarrow{s} s\act w)  \Psig_{y,z}$.}
  \ee
From the preceding lemma we know that $\msig(z \xrightarrow{s} s\act w)=1$  if $z = rs \act w$ or $(z,s\act w) = (s,r)$, and $\msig(z \xrightarrow{s} s\act w)=0$ otherwise.  The sum in \eqref{thesum} includes a summand indexed by $z = rs \act w$ if and only if $\delta = 1$. On the other hand, if $s \act w = r$ then the sum includes a summand indexed by $z = s$ 
if and only if $s=s^*$.
Since $\Psig_{y,s}  =1 $ if $y \in \{1,s\}$ and $\Psig_{y,s} =0$ otherwise, we conclude that 
\[
 P_{ y, w}^\sigma =
P_{ y,s \act  w}^\sigma
+
 q^{2}P_{s\act  y,s \act w}^\sigma   
+
\delta''\cdot  q(\Psig_{y,s\act w} - \Psig_{s\act y, s\act w})
-
\delta \cdot q^2 \Psig_{  y, rs\act w}
-
\delta''' \cdot q
\]
 where we define
 \[ \delta''' = \begin{cases} 1 & \text{if $y=1$ and $s=s^*$ and $w = srs$} \\ 0 &\text{otherwise}.\end{cases}\] 
  Note that if $\delta = 1$ then $\Psig_{y,rs\act w} = \Psig_{s\act y,rs\act w}$ by Corollary \ref{main-recurrence}.
 Thus to finish our proof, it is enough to check that 
 \[ \delta'' (\Psig_{y,s\act w} - \Psig_{s\act y, s\act w})
-
\delta''' 
= \delta'  ( \Psig_{1,s\act w} - \Psig_{s,s\act w})
\]
This is clear if
$y=1$ and $s=s^*$ and $w \neq srs$ since
 then $\delta' = \delta'' = 1$ and $\delta''' = 0$.
 On the other hand, if $y= 1$ and $ s = s^*$ but $w = srs$ then $\delta' = 0$ and $\delta'' = \delta''' = 1$ and
 $\Psig_{y,s\act w} - \Psig_{s\act y, s\act w} = \Psig_{1,r} = 1$, which again gives equality.
 Finally, if $y \neq 1$ or $s \neq s^*$ then $\delta' =\delta'' = \delta''' = 0$ and our equation again holds.
 \end{proof}

%
%
%
%
%
%
%
%
%
%
%
%

\subsection{Four technical propositions}
\label{technical-sect}
 
 To prove Conjectures \descref{A$'$} and \descref{B$'$} for the universal Coxeter system $(W,S)$ we require an analog of Proposition \ref{dyer-cor2}. The requisite statement splits into four somewhat more technical propositions, which we prove here.
The Coxeter system $(W,S)$ is always assumed to be universal in this section (and we stop stating this condition in our results).

 Mirroring the notation $P_{y,z;w}$ from \eqref{yzw}, given $y,z,w \in \I$ with $y \leq z$, we define 
 \be\label{yzwsig} P_{y,z;w}^\sigma \omdef= P_{y,w}^\sigma - P_{z,w}^\sigma.\ee
 Also, given elements $w_1,w_2,\dots ,w_k \in W$ we write $\langle w_1,w_2,\dots,w_k\rangle$ for the subgroup they generate.
 Finally, recall from Theorem-Definition \ref{rho-def} that we denote the rank function on $(\I,\leq)$ by 
\[\rho : \I \to \NN,\]
 so that $\rho(w)$ is the length of any reduced $\I$-expression for $w \in \I$.
  
  At least one half of the following result is well-known, being equivalent to the fact that the Kazhdan-Lusztig polynomials of dihedral Coxeter systems are all constant. 

\begin{proposition}\label{p1} Let $y,z,w \in \I$ with $y\leq z$. If $r,s \in S$ such that
 $w \in \langle r,s \rangle $, 
 then 
\[P^\sigma_{y,z;w} =P_{y,z;w} = \begin{cases} 1&\text{if $y\leq w$ and $z \not \leq w$} \\ 0&\text{otherwise}.\end{cases}\]
\end{proposition}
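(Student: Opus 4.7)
The proposition reduces to showing, in parallel,
(I) $P_{y,w}=1$ for every $y \le w$ in $W$ with $w \in \langle r,s\rangle$, and
(II) $\Psig_{y,w}=1$ for every $y \in \I$ with $y \le w$ and $w \in \langle r,s\rangle \cap \I$.
Granting these, the formula in the proposition follows from a four-case split on $(y \le w,\ z \le w)$: the pair $(y \not\le w,\ z \le w)$ is excluded by transitivity of $\le$ applied to $y \le z$, the cases $y \not\le w$ reduce to $0$ by Theorem-Definitions \ref{kl-thmdef}(b) and \ref{twisted-thmdef}(b), and the remaining two cases are computed directly from (I) and (II). The subword property (Theorem \ref{bruhat-thm} and its ordinary analogue) ensures that whenever $y \le w$ the element $y$ lies in $\langle r,s\rangle$, so both claims are really internal to the dihedral subsystem.

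Assertion (I) is classical but can be re-derived in parallel with (II) by a short induction on $\ell(w)$ using Lemma \ref{dyer-lem}. Claim (II) I would prove by induction on $\rho(w)$. The base case $\rho(w) \le 1$, equivalently $\ell(w) \le 2$, follows because the degree bound in Theorem-Definition \ref{twisted-thmdef}(c) together with Corollary \ref{lv-cor}(a) forces $\Psig_{y,w}$ to equal the constant $1$ whenever $\ell(w)-\ell(y) \le 2$. A quick preliminary check on the definition of $\I$ shows that $\langle r,s\rangle \cap \I$ contains no elements of rank $\ge 2$ unless $*$ preserves $\{r,s\}$ setwise, so in the inductive step I may assume $*$ acts on $\{r,s\}$ either trivially or as the transposition.

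For the inductive step, let $\sigma \in \Des(w)$ be the unique left descent of $w$ (necessarily in $\{r,s\}$) and let $\tau$ denote the other element of $\{r,s\}$. By Corollary \ref{main-recurrence}(a) I may replace $y$ by $\sigma\act y$ if necessary so that $\sigma \notin \Des(y)$. Since $\sigma \act w \ne 1$, its unique descent is $\tau$, hence $\tau\sigma\act w < \sigma\act w < w$ and Lemma \ref{dyer-an} gives
\[
\Psig_{y,w} = \Psig_{y,\sigma\act w} + q^2\,\Psig_{\sigma\act y,\sigma\act w} - \delta\, q^2\, \Psig_{\sigma\act y,\tau\sigma\act w} + \delta'\, q\,\bigl(\Psig_{1,\sigma\act w} - \Psig_{\sigma,\sigma\act w}\bigr).
\]
By induction each $\Psig$-value on the right is $0$ or $1$. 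The first term equals $1$: the $\I$-expression of $w$ begins with $\sigma$, and the $\I$-expression of $y$ does not, so by Theorem \ref{bruhat-thm} the latter embeds into the $\I$-expression of $\sigma\act w$. The remaining task is to verify the two cancellations. When $\delta = 1$, the $\I$-expression of $w$ begins $(\sigma,\tau,\sigma,\ldots)$, and the fact that $\sigma\act y$'s $\I$-expression begins with $\sigma$ forces any subword embedding into $\sigma\act w$'s expression $(\tau,\sigma,\ldots)$ to skip the initial $\tau$; this gives the equivalence $\sigma\act y \le \sigma\act w \Leftrightarrow \sigma\act y \le \tau\sigma\act w$, so the $q^2$-contributions cancel. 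When $\delta' = 1$ (which forces $\rho(w) \ge 3$), both $1 \le \sigma\act w$ and $\sigma \le \sigma\act w$ hold, so by induction $\Psig_{1,\sigma\act w}=\Psig_{\sigma,\sigma\act w}=1$ and the $q$-contribution also vanishes. Thus $\Psig_{y,w} = \Psig_{y,\sigma\act w} = 1$, completing the induction.

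The main obstacle is pinning down the two Bruhat-equivalences driving the cancellations; both rely on the explicit alternating shape of reduced $\I$-expressions in the dihedral subsystem, and the verification requires a small bookkeeping check that distinguishes the case $s^*=s$ from $s^*=r$.
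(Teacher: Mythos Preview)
Your approach matches the paper's: both reduce to showing $\Psig_{y,w}=1$ whenever $y\le w$ and $w\in\langle r,s\rangle$, and both argue by induction using Lemma~\ref{dyer-an}, with the base case coming from the degree bound and the constant term being $1$. The paper's proof is only a two-line sketch, so your write-up is in fact considerably more detailed.

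There is one small omission in the inductive step. You handle the case $\delta=1$ by showing the two $q^2$-terms cancel, but you do not address what happens when $\delta=0$. In that case the term $q^2\,\Psig_{\sigma\act y,\sigma\act w}$ is still present and must be shown to vanish. This is easy: $\delta=0$ means $\sigma\notin\Des(\tau\sigma\act w)$, which in the alternating $\I$-expression forces $\rho(w)=2$; then $\sigma\act w$ has $\I$-expression $(\tau)$ while $\sigma\act y$ has an $\I$-expression beginning with $\sigma$ (and of length $\ge 1$), so $\sigma\act y\not\le\sigma\act w$ and the term is zero. Once you insert this sentence, the argument is complete.
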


\begin{proof}
If suffices to show that $\Psig_{y,w} =P_{y,w} = 1$ if $y \leq w$; however, this follows by
a straightforward argument using induction on the length of $w$ and Lemmas \ref{dyer-lem} and \ref{dyer-an}. In particular, the base cases for our induction are given by  Corollary \ref{main-recurrence} and Lemma \ref{kl-cor}, which show that $\Psig_{y,w}=1$ if $y\leq w$ 
and  $\rho(w) \leq 1$, and that  $P_{y,w}=1$ if $y\leq w$ and $\ell(w) \leq 1$. 
\end{proof}

For the duration of this section we adopt the following specific setup: fix $y,z \in \I$ with $y \leq z$ and assume $w \in \I$ has the form
\be w = \underbrace{srsrs\cdots}_{k+1\text{ factors}} \act u\ee
where
\begin{itemize}
\item $k$ is a positive integer;
\item $r,s \in S$ such that $r\neq s$ and $s \notin \Des(y) $ and $s \notin \Des(z)$;
\item $u \in \I$ such that $\{r,s\} \cap \Des(u) = \varnothing$.
\end{itemize}
In addition, define $a \in \langle r,s\rangle \subset W$ as the element
\be\label{a} a = \underbrace{\cdots srsrs}_{k\text{ factors}} \ee
and let  $y',z',w' \in \I$ denote the twisted involutions 
\be
 y' = a\act y
\qquand
z' = a \act z
\qquand
w' = a\act w .\ee
Observe that $\rho(y') = \rho(a) + \rho(y)$ and $\rho(z')  = \rho(a) + \rho(z)$ and $\rho(w') = \rho(u) +1$, and that clearly $y' \leq z'$ in the Bruhat order. In addition, $w' $ is given by either $s \act u$ or $r \act u$, depending on the parity of $k$.
We now have our second proposition.

%
%

\begin{proposition}\label{p2}
 Suppose $w \notin \langle r,s \rangle $ and either $y \neq 1$ or $s\neq s^*$.
 Then 
\ben
\item[(a)]
$\Psig_{y,z;w} = 
\Psig_{y,z;sws^*} + q^{2k} \Psig_{y',z'; w'}$.

\item[(b)]
$
P_{y,z;w} =
P_{y,z; s ws^*} + q^{2k} P_{y',z'; w'} + 2q^k P_{ay,az;aws^*}$.
\een
\end{proposition}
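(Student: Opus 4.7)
For (a), I proceed by induction on $k$ using Lemma \ref{dyer-an}, the twisted analogue of Dyer's recursion. First, observe that the hypothesis $w \notin \langle r, s\rangle$ forces $s \act w = sws^*$: a short analysis in universal Coxeter systems shows that $sw = ws^*$ holds only when $w \in \{1, s\}$ with $s = s^*$, and these possibilities are excluded since $\rho(w) = k + 1 + \rho(u) \geq 2$. Apply Lemma \ref{dyer-an} to $(y, w)$ and to $(z, w)$ and subtract; the hypothesis ``$y \neq 1$ or $s \neq s^*$'' together with $w \neq srs$ (another consequence of $w \notin \langle r, s\rangle$) makes both $\delta'$ terms vanish. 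When $k = 1$ one has $rs \act w = u$ with $s \notin \Des(u)$, so $\delta = 0$, and subtraction yields $\Psig_{y, z; w} = \Psig_{y, z; sws^*} + q^2 \Psig_{s\act y, s\act z; sws^*}$ directly; this is (a) with $a = s$, $y' = s \act y$, $z' = s \act z$, $w' = sws^*$.

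For $k \geq 2$ the recurrence carries an extra term $-q^2 \Psig_{s\act y, s\act z; rs \act w}$ (since $\delta = 1$). Apply the inductive hypothesis with $r$ and $s$ interchanged and $k$ replaced by $k - 1$ to $\Psig_{s\act y, s\act z; s\act w}$; all hypotheses hold (notably $s\act y \neq 1$ since $\rho(s\act y) = \rho(y) + 1 \geq 1$), and the ``primed'' outputs agree with $y', z', w'$ via the identification $a \act (\cdot) = a_1 \act (s \act (\cdot))$, where $a_1 = \underbrace{\cdots sr}_{k-1}$ and $a = a_1 \cdot s$ in $W$. The two $\Psig_{s\act y, s\act z; rs \act w}$ terms cancel, yielding (a).

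For (b), I apply Dyer's Proposition \ref{dyer-cor2} once from the left and its right-handed version (noted in the remark after Proposition \ref{dyer-cor2}) twice from the right. Under the hypotheses, the iterated $\act$-action coincides with ordinary conjugation, so as elements of $W$ one has $w = v_{k+1} \cdot u \cdot v_{k+1}^{*-1}$ where $v_{k+1} = \underbrace{srs\cdots}_{k+1}$, and $y' = aya^{*-1}$, $z' = aza^{*-1}$, $w' = awa^{*-1}$. Setting $\tilde a := a^{*-1} = \underbrace{s^* r^* s^* \cdots}_k$, the left application gives $P_{y,z;w} = P_{y,z;sw} + q^k P_{ay, az; aw}$; the descent conditions $s^* \notin \mathrm{Des}_R(y), \mathrm{Des}_R(z), \mathrm{Des}_R(ay), \mathrm{Des}_R(az)$ needed for the right applications follow from $s \notin \Des(y), \Des(z)$ together with $\mathrm{Des}_R(y) = \Des(y)^*$ (which holds since $y \in \I$) and the absence of cancellation in the product $a \cdot y$. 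The right application to each of the two resulting terms yields
\[ P_{y,z;w} = P_{y,z;sws^*} + q^k P_{y\tilde a, z\tilde a; sw\tilde a} + q^k P_{ay, az; aws^*} + q^{2k} P_{ay\tilde a, az\tilde a; aw\tilde a}. \]
The last term equals $q^{2k} P_{y', z'; w'}$ by the identifications above. For the middle term, Lemma \ref{kl-cor} gives $P_{y\tilde a, sw\tilde a} = P_{(y\tilde a)^*, (sw\tilde a)^*} = P_{(ay)^{-1}, s^*(aw)^{-1}} = P_{ay, aws^*}$ after also using $P_{x_1, x_2} = P_{x_1^{-1}, x_2^{-1}}$; the same holds for $z$, so this term also equals $q^k P_{ay, az; aws^*}$, producing the coefficient $2q^k$ and completing (b).

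The main technical obstacle is verifying the identification $a \act y = aya^{*-1}$ in $W$ (and similarly for $z$ and $w$), which is what lets Dyer's $W$-level proposition interact cleanly with the $\I$-level objects on the left-hand sides. A step $s_i \act z$ collapses to $s_i z$ rather than $s_i z s_i^*$ precisely when $z \in \{1, s_i\}$ and $s_i = s_i^*$; the hypothesis ``$y \neq 1$ or $s \neq s^*$'', combined with $s \notin \Des(y)$ (which excludes $y = s$), rules out this collapse at the first step, and every subsequent step is automatic since each intermediate twisted involution has positive rank and its leading letter differs from the next letter of $a$. Once this identification is in place, the remaining content of both (a) and (b) is bookkeeping: descent-set verification and iterated use of the symmetries $P_{x_1, x_2} = P_{x_1^{-1}, x_2^{-1}} = P_{x_1^*, x_2^*}$.
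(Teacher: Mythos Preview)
Your proof is correct and follows essentially the same approach as the paper. For part (a) you argue exactly as the paper does, by induction on $k$ using Lemma~\ref{dyer-an} with the observation that the hypothesis ``$y\neq 1$ or $s\neq s^*$'' kills the $\delta'$ term at every stage (your mention of $w\neq srs$ here is harmless but unnecessary---the first two conditions already force $\delta'=0$). For part (b) you do one left application of Proposition~\ref{dyer-cor2} followed by two right applications, whereas the paper does one right application followed by two left ones; since $P_{y,z;w}=P_{y^{*-1},z^{*-1};w^{*-1}}$ these two routes are mirror images of each other and produce the same four terms, which are then matched up via the same symmetry from Lemma~\ref{kl-cor}. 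Your added paragraph verifying that $a\act y = ay a^{*-1}$ under the stated hypotheses is a useful explicit check that the paper leaves implicit.
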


\begin{remark} The best way of making sense of this and the next two propositions is through pictures. The recurrences in  each proposition are conveniently illustrated as trees whose nodes are labelled by the polynomials $\Psig_{y,z;w}$ or $P_{y,z;w}$ and whose edges are labelled by powers of $q$; see
Figures \ref{p2a}, \ref{p2b}, \ref{p3a}, \ref{p3b}, \ref{p4a}, and \ref{p4b}. 
In these diagrams,
the branches at each level indicate one application of Lemma \ref{dyer-an} or Lemma \ref{dyer-lem}; these lemmas add two or three children to a given node while possibly also canceling a node two levels down the tree. This cancelation accounts for the chains of $k$ single-child nodes, which appear as dashed lines.
\end{remark}

\begin{figure}[h]
{\small
\[  
 \xy<0.0cm,0.0cm> \xymatrix@!0@R=1.0cm@C=1.2cm{
*{} &
P^\sigma_
{y,z;w} &
*{}  &
*{}  &
*{}  &
*{}
\\
P^\sigma_
{y,z;sws^*} \ar @{<-} [ur]^{1} &
*{} &
*{\bullet}
 \ar @{<-} [ul]_{q^2} &
*{}  &
*{}  &
*{}
\\
*{} &
*{} &
*{}  &
*{\bullet}  \ar @{<--} [ul]  &
*{}
\\
*{} &
*{} &
*{}  &
*{}  &
P^\sigma_
{y',z';w'}  \ar @{<-} [ul]_{q^2} 
}\endxy
\]}
\caption{Labelled tree illustrating part (a) of Proposition \ref{p2}.}
\label{p2a}
\end{figure}
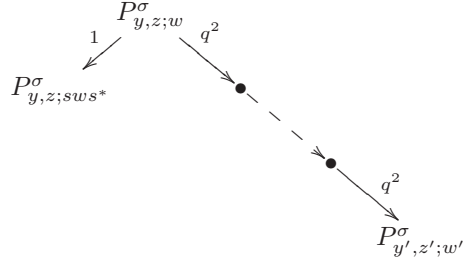

\begin{proof}
First consider Figure \ref{p2a}.
The proof of part (a) is very similar to that of Proposition \ref{dyer-cor2}, but using Lemma \ref{dyer-an} in place of Lemma \ref{dyer-lem}. The argument is entirely analogous because, under our current hypotheses,  whenever we apply Lemma \ref{dyer-an} the second indicator $\delta' $ defined in that result is zero.

Now consider Figure \ref{p2b}. 
To prove part (b), we first apply the right-handed version of Proposition \ref{dyer-cor2} to $P_{y,z;w}$ and then apply the left-handed version of Proposition \ref{dyer-cor2} to the result. In detail, the first application gives
\[P_{y,z;w} = P_{y,z;ws^*} + q^k P_{y{a^*}^{-1},z{a^*}^{-1};w{a^*}^{-1}}\]
while the second  gives
$
P_{y,z;ws^*}= P_{y,z;sws^*} + q^k P_{ay,az;aws^*}
$
and
\[
P_{y{a^*}^{-1},z{a^*}^{-1};w{a^*}^{-1}} = P_{y{a^*}^{-1},z{a^*}^{-1};sw{a^*}^{-1}} + q^k P_{y',z';w'}.
\]
Since $P_{ay,az;aws^*} = P_{y{a^*}^{-1},z{a^*}^{-1};sw{a^*}^{-1}}$ by Lemma \ref{kl-cor} combining the preceding  equations gives the desired recurrence.
%
\end{proof}

\begin{figure}[h]
{\small
\[
 \xy<0.0cm,-0.0cm> \xymatrix@!0@R=1.0cm@C=1.2cm{
*{} &
*{}  &
*{}  &
*{}  &
P_{y,z;w}  &
*{}  &
*{}  &
*{}  &
*{} &
*{}
\\
*{} &
*{}  &
*{}  &
P_{y,z;sw}\ar @{<-} [ur]^{1} &
*{}  &
*{\bullet}  \ar @{<-} [ul]_{q} &
*{}  &
*{}  &
*{} &
*{}
\\
*{} &
*{}  &
*{\bullet}  \ar @{<-} [ur]^{q}  &
*{}   &
P_{y,z;sws^*} \ar @{<-} [ul]_{1}  &
*{}  &
*{\bullet}  \ar @{<--} [ul] &
*{}  &
*{} &
*{}
\\
*{}&
*{\bullet}  \ar @{<--} [ur] &
*{}  &
*{}  &
*{}  &
*{}  &
*{}  &
P_{ay,az;aw}  \ar @{<-} [ul]_{q}&
*{}
\\
P_{y{a^{*}}^{-1}, z{a^{*}}^{-1};sw{a^{*}}^{-1}}  \ar @{<-} [ur]^{q} &
*{} &
*{}  &
*{}  &
*{}  &
*{}  &
*{\bullet}\ar @{<-} [ur]^{q}   &
*{} &
P_{ay,az;aws^*}  \ar @{<-} [ul]_{1}
\\
*{}&
*{} &
*{}  &
&
*{}  &
*{\bullet}\ar @{<--} [ur]   &
*{}  &
*{}  &
*{} &
*{}
\\
*{}&
*{} &
*{}  &
&
P_{y',z';w'} \ar @{<-} [ur]^{q}   &
*{}  &
*{}  &
*{}  &
*{} &
*{}
}\endxy
\]}
\caption{Labelled tree illustrating part (b) of Proposition \ref{p2}. 
} 
\label{p2b}
\end{figure}
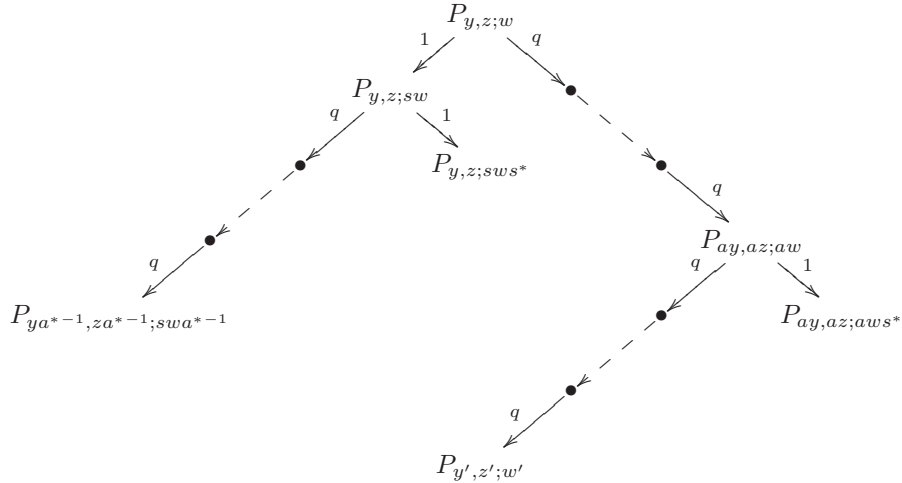

We proceed immediately to our next proposition.

\begin{proposition}\label{p3}
Suppose $w \notin \langle r,s \rangle $ and $y=1\neq z$ and $s=s^*$ and $r=r^*$. Then there are elements 
$u_0,u_1,\dots,u_{k} \in \I $ and $ z_1,z_2,\dots,z_k \in W$ with $u_i \leq u_{i+1}$ and $u_i \leq z_i$ such that
\ben

\item[(a)] 
$P_{y,z;w}^\sigma 
=   P^\sigma_{y,z;sws}+ q^{2k} P^\sigma_{y',z';w'}  + \sum_{0\leq i<k} q^{i+k} P^\sigma_{u_{i},u_{i+1};w'}$.

\item[(b)]
$P_{y,z;w} =
P_{y,z;sws} + q^{2k} P_{y',z';w'} 
+
\sum_{0\leq i<k} q^{i+k} \(P_{u_{i},u_{i+1};w'} + 2 P_{u_{i+1},z_{i+1};w'}\)
.$
\een

\end{proposition}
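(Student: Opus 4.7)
The plan is to mimic the proofs of Propositions \ref{dyer-cor2} and \ref{p2}, iteratively peeling off the leading factor of $w$ by applying Lemma \ref{dyer-an} for part (a) and Lemma \ref{dyer-lem} for part (b).  The crucial new feature, absent from Proposition \ref{p2}, is that the hypotheses $y = 1$, $s = s^*$, and $w \notin \langle r, s \rangle$ (so in particular $w \ne srs$) activate the indicator $\delta'$ in Lemma \ref{dyer-an} at the top-level reduction of $P^\sigma_{1,w}$, producing an extra correction term $q P^\sigma_{1, s; s\act w}$ that does not appear in Proposition \ref{p2}.  The analogous indicator in the reduction of $P^\sigma_{z,w}$ vanishes because $z \ne 1$, so this correction appears on one side only.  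A similar asymmetry recurs at every subsequent level of the iteration, since after one reduction the roles of $r$ and $s$ swap and one again finds a ``$y = 1$''--type argument on one side; this is the source of the summation over $i$ in the identity.

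I will proceed by induction on $k$.  For the base case $k = 1$, a single application of Lemma \ref{dyer-an} (for (a)) and the folding argument of Proposition \ref{p2}(b) (for (b)) yield the formulas, with $u_0 = 1$, $u_1 = s$, and a single summand.  For the inductive step, apply Lemma \ref{dyer-an} at the top level to obtain the anchor $P^\sigma_{y,z;sws}$, the main term $q^2 P^\sigma_{s, s\act z; s\act w}$, the $\delta'$--correction $q P^\sigma_{1, s; s\act w}$, and (when $k \ge 2$) a $\delta$--cancellation $-\, q^2 P^\sigma_{s, s\act z; rs\act w}$ arising because $\Des(rs\act w) = \{s\}$.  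Reducing the main term via Proposition \ref{p2}(a) (valid since the ``new $y$'' equals $s \ne 1$), the first summand produced there cancels exactly against the $\delta$--correction, leaving $q^{2k} P^\sigma_{y', z'; w'}$.  The remaining $q P^\sigma_{1, s; s\act w}$ is then handled by applying the inductive hypothesis to the reduced problem with $(r,s)$ swapped and $k$ replaced by $k-1$; this supplies all the remaining summands $q^{i+k} P^\sigma_{u_i, u_{i+1}; w'}$ in the stated form, with the $u_i$ built up as alternating prefixes of $a$.  For part (b) the same iteration is combined with the left-right folding of Proposition \ref{p2}(b), producing the doubled cross terms $2 P_{u_{i+1}, z_{i+1}; w'}$; the factor of $2$ arises because $r = r^*$ and $s = s^*$ make the antiautomorphism $\dag$ act trivially on elements built out of $r, s$.

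The main obstacle will be the careful bookkeeping of the cascade of $\delta'$--corrections and $\delta$--cancellations across the $k$ levels of the recursion.  One must verify at every level that the $\delta$--cancellation produced at the current step exactly cancels the appropriate main-term contribution generated by the next-level reduction, and additionally that the chain $u_0 \le u_1 \le \cdots \le u_k$ is increasing in the Bruhat order and that $u_i \le z_i$ (so that the differences $P^\sigma_{u_i, u_{i+1}; w'}$ and $P_{u_{i+1}, z_{i+1}; w'}$ are meaningful).  Both verifications reduce to Theorem \ref{bruhat-thm} together with Corollary \ref{main-recurrence}(a), which permits one to replace $P^\sigma_{u_i, u_{i+1}; w'}$ by equivalent differences under the $s$-- and $r$--orbit actions on $u_i$ whenever $s$ or $r$ lies in $\Des(w')$.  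Tree diagrams analogous to Figures \ref{p2a} and \ref{p2b}, with additional dangling branches at each level corresponding to the $\delta'$--corrections, make the full accounting manageable.
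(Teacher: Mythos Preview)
Your plan for part (a) is essentially the paper's own argument: one application of Lemma \ref{dyer-an} produces the anchor term, the main term, a $\delta$-correction, and the $\delta'$-extra $qP^\sigma_{1,s;s\act w}$; Proposition \ref{p2}(a) collapses the main term against the $\delta$-correction leaving $q^{2k}P^\sigma_{y',z';w'}$; and the inductive hypothesis (with $(r,s)$ swapped and $k-1$ in place of $k$) handles the extra term, after noting $P^\sigma_{1,s;rs\act w}=0$ since $s\in\Des(rs\act w)$.

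For part (b), however, your sketch misidentifies the mechanism. Lemma \ref{dyer-lem} has \emph{no} $\delta'$-correction, so the ``same iteration'' cannot be what generates the sum $\sum_i q^{i+k}(\cdots)$. In fact the identity of Proposition \ref{p2}(b) holds verbatim here (its proof never used $y\neq 1$ or $s\neq s^*$), giving
\[
P_{1,z;w}=P_{1,z;sws}+q^{2k}P_{y',z';w'}+2q^kP_{a,az;aws},
\]
and the entire content of \ref{p3}(b) beyond \ref{p2}(b) is the further expansion of the single cross term $P_{a,az;aws}$. The paper does this not by induction on $k$ but by repeated right-handed applications of Proposition \ref{dyer-cor2}, peeling letters off the \emph{right} of $aws$. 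This is where the elements $z_i$ must be defined explicitly (via an auxiliary descending chain $\wt z_{k+1}=az,\wt z_k,\dots$), and it is this bookkeeping, not any $\delta'$-cascade, that produces $\sum_i q^{i+k}(P_{u_i,u_{i+1};w'}+2P_{u_{i+1},z_{i+1};w'})$. Your remark that the factor $2$ comes from $r=r^*$, $s=s^*$ is also off: the $2$ is already present in \ref{p2}(b) without those hypotheses, arising from the left--right symmetry $P_{ay,az;aws^*}=P_{y{a^*}^{-1},z{a^*}^{-1};sw{a^*}^{-1}}$. To complete (b) you need to carry out that right-handed unwinding and define the $z_i$ concretely; the inductive scheme you outline for (a) does not transfer.
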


\begin{proof}
The twisted involutions $u_0,u_1,\dots,u_k \in \I$ are defined as follows:
\begin{itemize} 
\item 
If $k-i$ is even then let $u_i= (\cdots srsrs) \act 1$, where $(\cdots srsrs)$ has $i$ factors.

\item 
If $k-i$ is odd then let 
$ u_i =  (\cdots rsrsr) \act 1$, where $(\cdots rsrsr)$ has $i$ factors.
\end{itemize}

\begin{figure}[h]
{\small
\[  
 \xy<0.0cm,0.0cm> \xymatrix@!0@R=1.0cm@C=2.0cm{
*{} &
P^\sigma_
{1,z,w} &
*{}  &
*{}  &
*{}  &
*{}  &
*{}
\\
P^\sigma_
{1,z;sws} \ar @{<-} [ur]^{1} &
P^\sigma_
{1,s;s \act w}  \ar @{<-} [u]^{q}  
 &
*{\bullet}
 \ar @{<-} [ul]_{q^2} &
*{}  &
*{}  &
*{}
\\
0  \ar @{<-} [ur]^{1} &
P^\sigma_
{1,r;rs \act w}  \ar @{<-} [u]^{q} 
&
*{\bullet}
 \ar @{<-} [ul]_{q^2} &
*{\bullet} \ar @{<-} [ul]_{q^2} &
*{}  &
*{}
\\
0  \ar @{<-} [ur]^{1} &
*{\bullet}  \ar @{<-} [u]^q  &
*{\bullet}
 \ar @{<-} [ul]_{q^2}
&
*{\bullet}
 \ar @{<-} [ul]_{q^2}  &
*{\bullet}  \ar @{<-} [ul]_{q^2}  &
*{}
\\
*{} &
*{\bullet}  \ar @{<--} [u]&
*{}  &
*{\bullet}
 \ar @{<--} [ul]
&
*{\bullet}
 \ar @{<--} [ul] &
*{\bullet}  \ar @{<--} [ul]
\\
0  \ar @{<-} [ur]^{1}  &
P^\sigma_{u_0,u_1;w'}  \ar @{<-} [u]^{q}  &
P^\sigma_{u_1,u_2;w'}
 \ar @{<-} [ul]_{q^2} &
\cdots 
&
P^\sigma_{u_{k-2},u_{k-1};w'}
 \ar @{<-} [ul]_{q^2} &
P^\sigma_{u_{k-1},u_{k};w'}
 \ar @{<-} [ul]_{q^2}   &
P^\sigma_{y',z';w'}  \ar @{<-} [ul]_{q^2}
}\endxy
\]}
\caption{Labelled tree illustrating part (a) of Proposition \ref{p3}}
\label{p3a}
\end{figure}
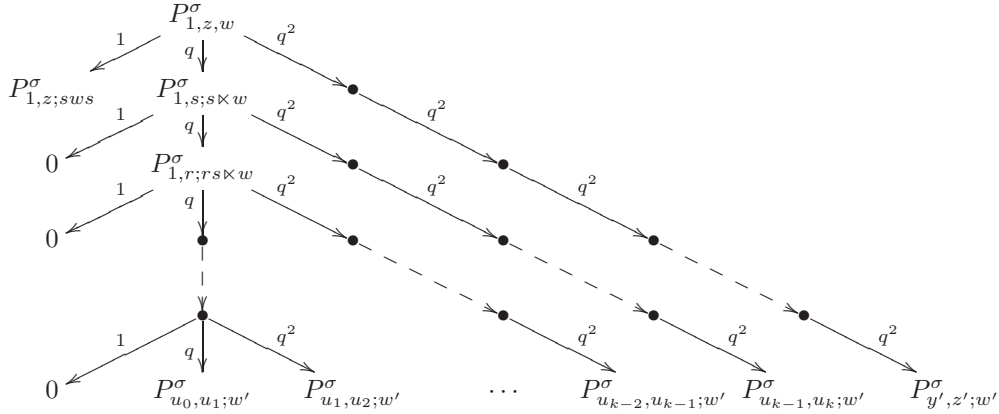

Consider Figure \ref{p3a}.
To prove part (a), we  note that Lemma \ref{dyer-an} implies 
\[ \Psig_{y,z;w} = \Psig_{1,z;s\act w} + q^2 (\Psig_{s,s\act z;s\act w} - \delta \cdot  \Psig_{s,s\act z;rs\act w}) + q \Psig_{1,s;s\act w}\]
where $\delta = 0$ if $k=1$ and $\delta = 1$ otherwise.
If $\delta = 1$ then Proposition \ref{p2} gives 
$\Psig_{s,s\act z;s\act w} = 
\Psig_{s,s\act z;rs\act w} + q^{2(k-1)} \Psig_{y',z';w'}
$; by substituting this into the previous  equation we  get in either case 
\be \label{ind}  \Psig_{y,z;w} = \Psig_{1,z;s \act w} + q^{2k} \Psig_{y',z';w'} + q \Psig_{1,s;s\act w}.\ee
If $k=1$ then this equation coincides with the recurrence in part (a), and if $k>1$ then 
by induction (with the parameters $(k,r,s,y,z,w)$ replaced by $(k-1,s,r,1,s,s\act w)$) we may assume that  
\[ \Psig_{1,s;s\act w} = \Psig_{1,s;rs\act w} + q^{2(k-1)} \Psig_{u_{k-1},u_k;w'} + \sum_{0\leq i < k-1} q^{i+k-1} \Psig_{u_i,u_{i+1};w'}.\]
Since here $\Psig_{1,s;rs\act w} = \Psig_{1,1;rs\act w} = 0$ as $s \in \Des(rs\act w)$, substituting the previous equation into \eqref{ind}
establishes part (a) for all $k$.

Before proving part (b) we must define the elements $z_i \in W$. For this, we first define 
an intermediate sequence $ \wt z_1,\wt z_2, \dots, \wt z_{k+1} \in W$ in the following way.  
Set $\wt z_{k+1} = az$ where $a$ is given by \eqref{a}, and for $i \leq k$ define $\wt z_i$ inductively by these cases:
\begin{itemize}
\item If $k-i$ is even then let $\wt z_i$ be the element with smaller length in the set $\{ \wt z_{i+1}, \wt z_{i+1} r^*\}$.

\item If $k-i$ is odd then let $\wt z_i$ be the element with smaller length in the set $\{ \wt z_{i+1}, \wt z_{i+1} s^*\}$.

\end{itemize}
Note by construction that $\wt z_i r^* > \wt z_i$ if $k-i$ is even and $\wt z_i s^* > \wt z_i$  if $k-i$ is odd. Finally, define $z_1,z_2,\dots,z_k \in W$ as follows:
\begin{itemize}
\item If $k-i$ is even then let $z_i = \wt z_i (rsrsr\cdots)^*$ where $(rsrsr\cdots)$ has $i-1$ factors.

\item If $k-i$ is odd then let $z_i = \wt z_i (srsrs\cdots)^*$ where $(srsrs\cdots)$ has $i-1$ factors.
\end{itemize}
Note that by construction $\ell(z_i) = \ell(\wt z_i) + i-1$. Note also that since we assume  $s=s^*$ and $r=r^*$, the $*$'s in the preceding bullet points are superfluous; however, these will be significant in the proof of the next proposition when we refer  to the definition of $u_{k-1}$, $u_k$, and $z_k$.

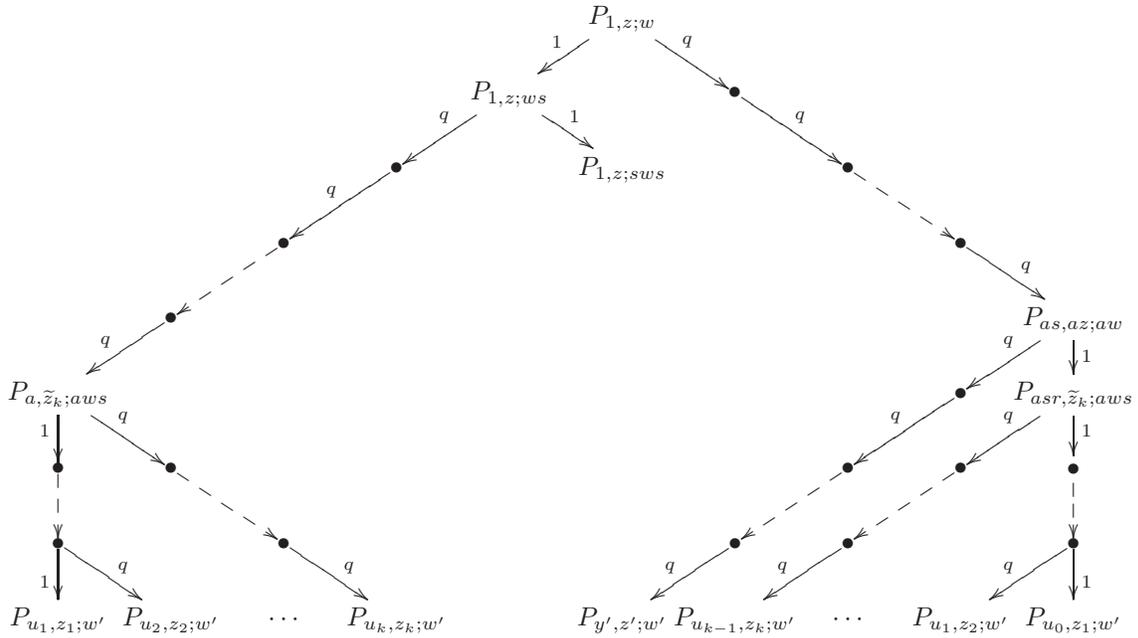
\begin{figure}[h]
{\small
\[  
 \xy<0.0cm,0.0cm> \xymatrix@!0@R=1.0cm@C=1.5cm{
*{} &
*{} &
*{}&
*{}  &
*{}  &
P_{1,z;w}  &
*{}  &
*{}&
*{} &
*{}  
\\
*{}&
*{} &
*{}&
*{}  &
P_{1,z;ws}   \ar @{<-} [ur]^{1} &
*{}  &
*{\bullet}    \ar @{<-} [ul]_{q}&
*{}&
*{} &
*{}  
\\
*{} &
*{} &
*{}&
*{\bullet}   \ar @{<-} [ur]^{q}  &
*{}  &
P_{1,z;sws}   \ar @{<-} [ul]_{1} &
*{}  &
*{\bullet}   \ar @{<-} [ul]_{q}&
*{} &
*{}  
\\
*{} &
*{} &
*{\bullet}   \ar @{<-} [ur]^{q}  &
*{}&
*{}  &
*{} &
*{}  &
*{} &
*{\bullet}   \ar @{<--} [ul]&
*{}  
\\
*{} &
*{\bullet}   \ar @{<--} [ur]  &
*{} &
*{}&
*{}  &
*{} &
*{}  &
*{} &
*{}  &
P_{as,az;aw}   \ar @{<-} [ul]_q&
*{}
\\
P_{a,\wt z_k;aws}   \ar @{<-} [ur]^q  &
*{} &
*{}&
*{}  &
*{} &
*{}  &
*{} &
*{}  &
*{\bullet}   \ar @{<-} [ur]^q&
P_{asr, \wt z_{k};aws}  \ar @{<-} [u]_1
\\
*{\bullet}  \ar @{<-} [u]^1 &
*{\bullet}   \ar @{<-} [ul]_q &
*{}&
*{}  &
*{} &
*{}  &
*{} &
*{\bullet}   \ar @{<-} [ur]^q  &
*{\bullet}   \ar @{<-} [ur]^q&
\bullet \ar @{<-} [u]_1
\\
*{\bullet} \ar @{<--} [u] &
*{} &
*{\bullet}   \ar @{<--} [ul] &
&
*{} &
&
*{\bullet}   \ar @{<--} [ur]  &
*{\bullet}   \ar @{<--} [ur]&
*{}  &
*{\bullet}  \ar @{<--} [u]
\\
P_{u_1, z_1;w'}  \ar @{<-} [u]^1 &
P_{u_2,z_2;w'}   \ar @{<-} [ul]_q &
\cdots&
P_{u_k,z_k ;w'}   \ar @{<-} [ul]_q   &
&
P_{y',z';w'}   \ar @{<-} [ur]^q &
P_{u_{k-1},z_{k};w'}   \ar @{<-} [ur]^q  &
\cdots  &
P_{u_{1},z_{2};w'}   \ar @{<-} [ur]^q&
P_{u_{0},z_{1};w'}  \ar @{<-} [u]_1
}\endxy
\]}
\caption{Labelled tree illustrating part (b) of Proposition \ref{p3}}
\label{p3b}
\end{figure}

Consider Figure \ref{p3b}. To prove part (b), we  note from Proposition \ref{dyer-cor2} that 
\[  P_{y,z;w} = P_{1,z;sw} + q^k P_{a,az;aw}
= P_{1,z;ws} + q^k P_{as,az;aw}.
\]
Here the second equality follows from  properties in Lemma  \ref{kl-cor} (in particular, the fact that $P_{y,w} = P_{ys,w}$ if $ws<w$).
One checks similarly that  applying (the left- and right-handed versions of) Proposition \ref{dyer-cor2} to the terms on the right gives
\be\label{ind2} P_{y,z;w} = P_{1,z;sws} + q^k P_{a,\wt z_k;aws} + q^k P_{asr,\wt z_k;aws} + q^{2k} P_{y',z';w'}.\ee 
From here, it is a straightforward exercise to check the identities \[ P_{a,\wt z_k;aws} = \sum_{i=0}^{k-1} q^{i} P_{u_{i+1},z_{i+1};w'}
\qquand
P_{asr,\wt z_k;aws}  
 = \sum_{i=0}^{k-1} q^{i} P_{u_{i},z_{i+1};w'}
\]
which on substitution afford the desired recurrence  (since $P_{u_{i-1},z_{i};w'} + P_{u_{i},z_{i};w'} = P_{u_{i-1},u_{i};w'} + 2 P_{u_{i},z_{i};w'}$). 
In particular, one obtains these identities by applying the right-handed version Proposition \ref{dyer-cor2} to the
left hand sides, and  then  applying the proposition again 
to the term in the result with coefficient one,  repeating this process until the third index of every polynomial is $w'$.
%
%
%
%
\end{proof}

For this section's final proposition, it is convenient to let $y'',z'',w'' \in W$ denote the elements
\be y'' =a
\qquand
z'' =
\begin{cases} azr^*&\text{if $r^* \in \mathrm{Des}_R(z)$} \\ az&\text{otherwise}\end{cases}
\qquand
w'' = awsr^*.
\ee
We remark that in the notation of the proof of the previous proposition, the element $z'' = \wt z_k$. Thus we also have $z_k = z'' (rsrsr\cdots)^*$ where $(rsrsr\cdots)$ has $k-1$ factors.

\begin{proposition} \label{p4}
 Suppose   $y = 1 \neq z$ and $s= s^*$ and $r\neq r^*$ (so that automatically $w \notin \langle r,s \rangle $). Then, with  $u_{k-1},u_k \in \I$ and $z_k \in W$  defined as in the proof of Proposition \ref{p3}, we have
\ben
\item[(a)] $ P_{y,z;w}^\sigma 
=  P^\sigma_{y,z;s  ws} +q^{2k} P^\sigma_{y',z';w'} +   q^{2k-1} P^\sigma_{u_{k-1},u_k;w'}$.

\item[(b)]
$P_{y,z;w} 
= P_{y,z;s  ws} +  q^{2k} P_{y',z';w'} + q^{2k-1} \(P_{u_{k-1},u_k;w'} + 2 P_{u_k,z_k;w'} \)+ \begin{cases}    2q^k P_{y'',z'';w''} &\text{if }k>1\\ 0&\text{if }k=1.\end{cases}$
\een
\end{proposition}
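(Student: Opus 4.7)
The proof follows the strategy of Proposition \ref{p3}, adapted to the setting $r \neq r^*$.

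\textbf{Part (a).} Apply Lemma \ref{dyer-an} once to $\Psig_{y,z;w}$. Under the hypotheses $y=1$, $s=s^*$, and $w \notin \langle r,s\rangle$ (which forces $w \neq srs$), the indicator $\delta'$ of Lemma \ref{dyer-an} equals $1$, while $\delta = [s \in \Des(rs \act w)]$ equals $0$ for $k=1$ and $1$ for $k \geq 2$ (the latter verified from the two-sided $\I$-action on $u$). Noting $s \act w = sws$, this yields
\[
\Psig_{y,z;w}
= \Psig_{y,z;sws}
+ q^{2}\bigl(\Psig_{s, s\act z; s\act w} - \delta \cdot \Psig_{s, s\act z; rs\act w}\bigr)
+ q\, \Psig_{1, s; s\act w}.
\]
For $k \geq 2$, apply Proposition \ref{p2}(a) to the middle pair (with $r,s$ swapped and parameter $k-1$; valid since new $y = s \neq 1$) to collapse it into $q^{2k}\Psig_{y',z';w'}$. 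Apply Proposition \ref{p2}(a) again to $\Psig_{1, s; s\act w}$ (valid since the swapped new $s = r \neq r^* = $ new $s^*$), obtaining $\Psig_{1, s; s\act w} = \Psig_{1, s; rs\act w} + q^{2(k-1)} \Psig_{u_{k-1}, u_k; w'}$. The first term vanishes by Corollary \ref{main-recurrence}(a), since $s \act s = 1$ and $s \in \Des(rs\act w)$ imply $\Psig_{s, rs\act w} = \Psig_{1, rs\act w}$. The identification of $u_{k-1}$, $u_k$, and $w'$ with the inner Proposition \ref{p2} application's secondary $y',z',w'$ follows directly from the fact that $\act$ extends to a group action of $W$ on $\I$ in the universal case. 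For $k=1$ the recurrence matches the stated formula immediately, since then $\Psig_{s, s\act z; s\act w} = \Psig_{y',z';w'}$ and $\Psig_{1, s; s\act w} = \Psig_{u_0, u_1; w'}$.

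\textbf{Part (b).} Proceed along the lines of the proof of Proposition \ref{p3}(b). Apply Proposition \ref{dyer-cor2} to $P_{y,z;w}$, then use Lemma \ref{kl-cor} (via $P_{y,w} = P_{y^*, w^*}$ on elements of $\I$ together with $s = s^*$, and the right-descent identity $P_{a, az; aw} = P_{as, az; aw}$ derived from $s \in \mathrm{Des}_R(aw)$) to rewrite
\[
P_{y,z;w} = P_{y,z;ws} + q^{k} P_{as, az; aw}.
\]
Iterate by applying left-handed Proposition \ref{dyer-cor2} to $P_{y,z;ws}$ and the $r \to r^*$ analog of right-handed Proposition \ref{dyer-cor2} to $P_{as, az; aw}$. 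The latter is the essential adaptation: because $r \neq r^*$, the rightmost alternating pattern in $aw$ is $\cdots sr^*sr^*s$ rather than $\cdots srsrs$, so the right-handed statement is applied with $r$ replaced by $r^*$. Continue peeling off alternating letters until the third index of every surviving polynomial reduces to $w'$; the terms $P_{u_{k-1}, u_k; w'}$ and $P_{u_k, z_k; w'}$ emerge from the final stage of the iteration (analogous to the $i = k-1$ summand in Proposition \ref{p3}(b)). The residual term $2q^k P_{y'', z''; w''}$ with $w'' = awsr^*$ reflects the mismatch between the left alternating pattern $srsrs\cdots$ and the right alternating pattern $sr^*sr^*\cdots$: a symmetric reduction produces two equal contributions of $P_{y'', z''; w''}$, explaining the coefficient $2$. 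For $k = 1$, the element $awsr^*$ collapses under cancellation into terms already accounted for, so the residual vanishes.

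The main obstacle is the bookkeeping in part (b): the recursion tree of applications of Proposition \ref{dyer-cor2} and Lemma \ref{kl-cor} is intricate, and one must verify that every branch---especially the residual contributions that arise only because $r \neq r^*$---is tracked with the correct coefficient. The case-split at $k = 1$ requires particular attention to which intermediate elements coincide under cancellation, so that one can confirm the extra term genuinely disappears rather than being absorbed into a hidden shift of another coefficient.
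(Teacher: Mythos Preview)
Your approach is essentially the same as the paper's, and part (a) is correct. The paper likewise derives \eqref{ind} first (via Lemma \ref{dyer-an} and Proposition \ref{p2} applied to the $q^2$-pair), then for $k>1$ applies Proposition \ref{p2} to $\Psig_{1,s;s\act w}$ using the hypothesis $r\neq r^*$, exactly as you do.

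For part (b) your outline also matches the paper's strategy, but two of your justifications are imprecise. First, the rewrite $P_{y,z;w} = P_{y,z;ws} + q^k P_{as,az;aw}$ is correct, but it does not follow from ``$P_{y,w}=P_{y^*,w^*}$ on elements of $\I$'' alone: one really needs the right-handed version of Proposition \ref{dyer-cor2} (equivalently, $P_{x,v}=P_{x^{-1},v^{-1}}$ together with $P_{x,v}=P_{x^*,v^*}$ and $z,w\in\I$) to identify the $q^k$-term, plus the right-descent identity you state. Second, your explanation of the $k=1$ case is off: the element $awsr^*$ does not ``collapse under cancellation.'' Rather, for $k=1$ one has $a=s$, $w'=aws$, $u_0=1$, $u_1=s$, $z_1=z''$, and the four-term formula
\[
P_{y,z;w} = P_{1,z;sws} + q^k P_{a,z'';aws} + q^k P_{as,z'';aws} + q^{2k} P_{y',z';w'}
\]
already has every third index equal to $w'$, so no further step toward $w''$ is taken. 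The paper handles $k>1$ by one more right-handed application of Proposition \ref{dyer-cor2} (with parameter $k-1$) to each of $P_{a,z'';aws}$ and $P_{as,z'';aws}$, which is where the two copies of $P_{y'',z'';w''}$ appear; this is the precise form of the ``symmetric reduction'' you allude to.
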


\begin{figure}[h]
{\small
\[  
 \xy<0.0cm,0.0cm> \xymatrix@!0@R=1.0cm@C=2.0cm{
*{} &
P^\sigma_
{1,z;w} &
*{}  &
*{}  &
*{}  &
*{}  &
*{}
\\
P^\sigma_
{1,z;sws} \ar @{<-} [ur]^{1} &
P^\sigma_
{1,s;s \act w}  \ar @{<-} [u]^{q}  
 &
*{\bullet}
 \ar @{<-} [ul]_{q^2} &
*{}  &
*{}  &
*{}
\\
0  \ar @{<-} [ur]^{1} &
&
*{\bullet}
 \ar @{<-} [ul]_{q^2} &
*{\bullet} \ar @{<-} [ul]_{q^2} &
*{}  &
*{}
\\
*{} &
&
*{}  &
*{\bullet}
 \ar @{<--} [ul] &
*{\bullet}  \ar @{<--} [ul]
\\
 &
 &
 &
&
P^\sigma_{u_{k-1},u_{k};w'}
 \ar @{<-} [ul]_{q^2}   &
P^\sigma_{y',z';w'}  \ar @{<-} [ul]_{q^2}
}\endxy
\]}
\caption{Labelled tree illustrating part (a) of Proposition \ref{p4}}
\label{p4a}
\end{figure}

\begin{proof}
Consider Figure \ref{p4a}.
To prove part (a), we first note that  the argument used to show \eqref{ind} in the previous proposition remains valid here and  
gives
  \[ \Psig_{y,z;w} = \Psig_{1,z;s \act w} + q^{2k} \Psig_{y',z';w'} + q \Psig_{1,s;s\act w}.\]
  If $k=1$ then (using the definitions in the proof of Proposition \ref{p3}) we have  $u_0 = 1$ and $u_1 = s$ and so this equation coincides with the desired recurrence. If $k>1$, then since $r\neq r^*$,  we can apply Proposition \ref{p1} with the parameters $(k,r,s,y,z,w)$ replaced by $(k-1,s,r,1,s,s\act w)$ to obtain 
\[ 
\Psig_{1,s;s\act w} = \Psig_{1,s;rs\act w} + q^{2(k-1)} \Psig_{u_{k-1},u_k;w'}.
\]
Here $u_{k-1} = (\cdots rsrsr)\act 1$ where $(\cdots rsrsr)$ has $k-1$ factors and $u_k = (\cdots srsrs)\act 1$ where $(\cdots srsrs)$ has $k$ factors.
Substituting this identity into our formula for $\Psig_{y,z,w} $  then establishes part (a) for all $k$.

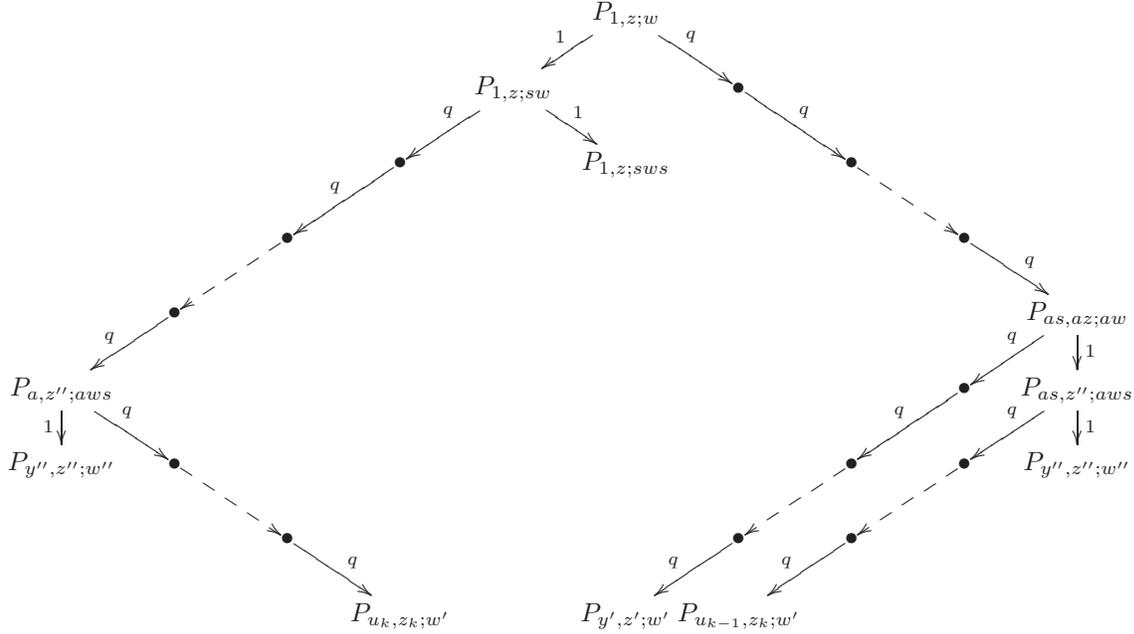
\begin{figure}[h]
{\small
\[  
 \xy<0.0cm,0.0cm> \xymatrix@!0@R=1.0cm@C=1.5cm{
*{} &
*{} &
*{}&
*{}  &
*{}  &
P_{1,z;w}  &
*{}  &
*{}&
*{} &
*{}  
\\
*{}&
*{} &
*{}&
*{}  &
P_{1,z;sw}   \ar @{<-} [ur]^{1} &
*{}  &
*{\bullet}    \ar @{<-} [ul]_{q}&
*{}&
*{} &
*{}  
\\
*{} &
*{} &
*{}&
*{\bullet}   \ar @{<-} [ur]^{q}  &
*{}  &
P_{1,z;sws}   \ar @{<-} [ul]_{1} &
*{}  &
*{\bullet}   \ar @{<-} [ul]_{q}&
*{} &
*{}  
\\
*{} &
*{} &
*{\bullet}   \ar @{<-} [ur]^{q}  &
*{}&
*{}  &
*{} &
*{}  &
*{} &
*{\bullet}   \ar @{<--} [ul]&
*{}  
\\
*{} &
*{\bullet}   \ar @{<--} [ur]  &
*{} &
*{}&
*{}  &
*{} &
*{}  &
*{} &
*{}  &
P_{as,az;aw}   \ar @{<-} [ul]_q&
*{}
\\
P_{a,z'';aws}   \ar @{<-} [ur]^q  &
*{} &
*{}&
*{}  &
*{} &
*{}  &
*{} &
*{}  &
*{\bullet}   \ar @{<-} [ur]^q&
P_{as, z'';aws}  \ar @{<-} [u]_1
\\
P_{y'',z'';w''}  \ar @{<-} [u]^1 &
*{\bullet}   \ar @{<-} [ul]_q &
*{}&
*{}  &
*{} &
*{}  &
*{} &
*{\bullet}   \ar @{<-} [ur]^q  &
*{\bullet}   \ar @{<-} [ur]^q&
P_{y'',z'';w''} \ar @{<-} [u]_1
\\
&
*{} &
*{\bullet}   \ar @{<--} [ul] &
&
*{} &
&
*{\bullet}   \ar @{<--} [ur]  &
*{\bullet}   \ar @{<--} [ur]&
*{}  &
\\
 &
 &
&
P_{u_k,z_k ;w'}   \ar @{<-} [ul]_q   &
&
P_{y',z';w'}   \ar @{<-} [ur]^q &
P_{u_{k-1},z_{k};w'}   \ar @{<-} [ur]^q  &
  &
&
}\endxy
\]}
\caption{Labelled tree illustrating part (b) of Proposition \ref{p4} (when $k>1$)}
\label{p4b}
\end{figure}

To prove part (b),  consider Figure \ref{p4b} and observe that it follows by successive applications of Propositions \ref{dyer-cor2}, exactly as in the proof of Proposition \ref{p3}, that 
\[ P_{y,z;w} = P_{1,z;sws} + q^k P_{a, z'';aws} + q^k P_{as,z'';aws} + q^{2k} P_{y',z';w'}.\] 
Note that the third term on the right $q^k P_{as,z'';aws}$ differs from the analogous equation \eqref{ind2} above; this is because now we have  $asr^*\not< as$ since $r\neq r^*$.
 
Now, if $k=1$ then $u_{k-1} = u_0 = as = 1$ and $u_k = u_1 = a = s$ and $z_k = z_1 = z''$ and $w' = aws$,
 so the preceding formula for $P_{y,z;w}$ coincides with the desired recurrence as $P_{u_{k-1},z_k;w'} + P_{u_k,z_k;w'} = P_{u_{k-1},u_k;w'} + 2P_{u_k,z_k;w'}$. Alternatively, if $k>1$ then the right-handed version of Proposition \ref{dyer-cor2} with the parameters $(k,r,s,y,z,w)$ replaced by $(k-1,s,r,a,z'',aws)$ or $(k-1,s,r,as,z'',aws)$ gives 
 \[  P_{a, z'';aws} = P_{y'',z'';w''} + q^{k-1} P_{u_k,z_k;w''}
 \qquand
  P_{as, z'';aws}  =  P_{as,z'';w''} + q^{k-1} P_{u_{k-1},z_k;w''}. \]
 Since $w''s<w''$ as $k>1$, we have $P_{as,z'';w''} = P_{a,z'';w''} = P_{y'',z'';w''}$, and so substituting these two identities into our previous equation gives the desired recurrence in all cases.
\end{proof}

Our first application of these  results is the following theorem, which shows that the perhaps most natural analogues of Conjectures \descref{A} and \descref{B} for twisted involutions (which are false in general) do hold in the universal case. 

 \begin{theorem}\label{atypical-thm}
If $(W,S)$ is a universal Coxeter system and $* \in \Aut(W)$ is any $S$-preserving involution, then the difference $\Psig_{y,w} - \Psig_{z,w}$ has nonnegative integer coefficients for all $y,z,w \in \I$ with $y \leq z$ in the Bruhat order. In particular,   $\Psig_{y,w} \in \NN[q]$ for each $y,w \in \I$. 
 \end{theorem}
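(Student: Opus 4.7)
The plan is to prove the theorem by induction on the rank $\rho(w)$, using the four technical Propositions \ref{p1}--\ref{p4} as the engine. The statement that $\Psig_{y,w} \in \NN[q]$ will follow immediately from the difference statement by taking $z = w$, since $\Psig_{w,w} = 1$ so $\Psig_{y,w} = \Psig_{y,w;w} + 1$ when $y \leq w$, and $\Psig_{y,w} = 0$ otherwise.

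For the base case $\rho(w) \leq 1$, the element $w$ lies in $\{1\} \cup (S \cap \I)$, and direct inspection (or Corollary \ref{lv-cor}(a) with the degree bound in Theorem-Definition \ref{twisted-thmdef}(c)) shows $\Psig_{y,w} = 1$ if $y \leq w$ and $\Psig_{y,w} = 0$ otherwise. Since Bruhat order is transitive, $y \leq z$ and $y \not\leq w$ force $z \not\leq w$, so $\Psig_{y,z;w} \in \{0,1\} \subset \NN[q]$.

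For the inductive step, mirroring the proof of Theorem \ref{dyer-thm1}, first use Corollary \ref{main-recurrence}(a) together with Observation \ref{bruhat-thm} and the subword property to replace the pair $(y,z)$ by the unique pair $(y',z')$ among $\{(y,z),(s\act y, z), (y, s\act z), (s\act y, s\act z)\}$ satisfying $s\notin \Des(y')$, $s\notin\Des(z')$, and $y'\leq z'$, where $s$ is the unique element of $\Des(w)$ (Observation \ref{3.1}(c)); the identity $\Psig_{y,z;w} = \Psig_{y',z';w}$ is then immediate from Corollary \ref{main-recurrence}(a). Next, set $u = s\act w$, so $\rho(u) = \rho(w)-1 \geq 1$; the unique descent $r$ of $u$ satisfies $r\neq s$ by uniqueness of reduced $\I$-expressions, and this lets us write $w = (srsrs\cdots)_{k+1\text{ factors}} \act u'$ uniquely for some $k\geq 1$ and $u' \in \I$ with $\{r,s\}\cap \Des(u')=\varnothing$. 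Now we are in the situation of Propositions \ref{p1}--\ref{p4}: if $u' = 1$ then $w\in\langle r,s\rangle$ and Proposition \ref{p1} gives $\Psig_{y',z';w}\in\{0,1\}$; otherwise Proposition \ref{p2}(a), \ref{p3}(a), or \ref{p4}(a) applies depending on whether $y'=1$ and on the fixed-point status of $r$ and $s$ under $*$.

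In each case, the right-hand side of the relevant recurrence is a sum of terms of the form $q^j \cdot \Psig_{y'',z'';w''}$ with $j \geq 0$ and $\rho(w'') < \rho(w)$ (for instance, in Proposition \ref{p2}(a) one has $\rho(sws^*) = \rho(w)-1$ and $\rho(w') = \rho(w)-k$), together with inequalities $y'' \leq z''$ guaranteed by the propositions (e.g.\ $y' \leq z'$, $u_i \leq u_{i+1}$, or $u_{k-1}\leq u_k$). By the inductive hypothesis, each such $\Psig_{y'',z'';w''}$ lies in $\NN[q]$, so $\Psig_{y,z;w} \in \NN[q]$ as desired. The main obstacle I anticipate is the purely bookkeeping verification that the reduction to $(y',z')$ genuinely preserves the Bruhat inequality $y' \leq z'$, which relies on the unique-reduced-expression feature of universal Coxeter systems (Observation \ref{3.1}(a),(b)) and the subword characterization (Theorem \ref{bruhat-thm}); everything else is driven by plugging into the already-established recurrences.
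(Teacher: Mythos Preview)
Your proof is essentially the paper's own: induct on $\rho(w)$, normalize $(y,z)$ via Corollary~\ref{main-recurrence}(a), and apply one of Propositions~\ref{p1}--\ref{p4}. Two minor slips, neither fatal: when $\rho(w)\leq 1$ the element $w$ need not lie in $\{1\}\cup(S\cap\I)$ (for $s\neq s^*$ one has $s\act 1=ss^*$), though your constant-term plus degree argument still gives $\Psig_{y,w}\in\{0,1\}$; and the implication ``$u'=1\Rightarrow w\in\langle r,s\rangle$'' fails (e.g.\ $s=s^*$, $r\neq r^*$, $k$ odd gives $w$ involving $r^*$), so the correct dichotomy is simply whether $w\in\langle r,s\rangle$, with Propositions~\ref{p2}--\ref{p4} applying in the complementary case even when $u'=1$.
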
 
 
 \begin{proof}
The proof is by induction on $\rho(w)$, and is similar to that of Theorem \ref{dyer-thm1}.
Fix $y,z,w \in \I$ with $y< z$. If $\rho(w) \leq 1$  then the theorem follows from Proposition \ref{p1}. Suppose $\rho(w) \geq 2$, and that $s \in \Des(w)$.
By  Corollary \ref{main-recurrence} we may assume  that $s \notin \Des(y)$ and $s\notin \Des(z)$, in which case one  checks that the triple $(y,z,w)$ satisfies the hypotheses of one of  Propositions \ref{p1}, \ref{p2}, \ref{p3}, or \ref{p4}. These propositions then imply $\Psig_{y,z;w} \in \NN[q]$ by induction.
 \end{proof}


Next, as the main result of this section we  prove that  Conjectures \descref{A$'$} and \descref{B$'$} hold for universal Coxeter systems.

\begin{theorem}\label{typical-thm} If $(W,S)$ is a universal Coxeter system and $* \in \Aut(W)$ is any $S$-preserving involution, then the  polynomials $P^+_{y,w}-P^+_{z,w}$ and $P^-_{y,w}-P^-_{z,w}$ have nonnegative integer coefficients for all $y,z,w \in \I$ with $y \leq w$ in the Bruhat order. 
In particular,   $P^+_{y,w} \in \NN[q]$ and $P^-_{y,w} \in \NN[q]$ for each $y,w \in \I$. 
\end{theorem}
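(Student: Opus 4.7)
The plan is to pair up the parallel recurrences in Propositions \ref{p1}, \ref{p2}, \ref{p3}, and \ref{p4}, and extract from each pair a recurrence for $P^\pm_{y,z;w} \omdef = P^\pm_{y,w} - P^\pm_{z,w} = \tfrac{1}{2}(P_{y,z;w} \pm \Psig_{y,z;w})$ with manifestly nonnegative coefficients. The key observation driving the proof is that in every one of these four propositions the recurrence for $P_{y,z;w}$ is obtained from the recurrence for $\Psig_{y,z;w}$ by adjoining further $P$-terms whose coefficients are \emph{even} nonnegative multiples of monomials in $q$ (namely $2q^k P_{ay,az;aws^*}$ in Proposition \ref{p2}(b), the terms $2q^{i+k}P_{u_{i+1},z_{i+1};w'}$ in Proposition \ref{p3}(b), and $2q^{2k-1}P_{u_k,z_k;w'} + 2q^k P_{y'',z'';w''}$ in Proposition \ref{p4}(b)). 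Halving, these contributions will appear integrally in both $P^+_{y,z;w}$ and $P^-_{y,z;w}$, while the matched $\Psig$-terms on each side combine cleanly into $P^\pm$-terms for strictly smaller parameters.

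The argument proceeds by induction on $\rho(w)$. For the base case $\rho(w)\le 1$, Proposition \ref{p1} gives $\Psig_{y,z;w} = P_{y,z;w} \in \{0,1\}$, so $P^+_{y,z;w}=P_{y,z;w}$ and $P^-_{y,z;w}=0$, both nonnegative. For the inductive step, fix $y\le z$ in $\I$ and $s \in \Des(w)$. Since $w^{-1}=w^*$, we have $\ell(sw)=\ell(w^{-1}s)=\ell(ws^*)$, so $s \in \Des(w)$ is equivalent to $s^* \in \mathrm{Des}_R(w)$; combining the left- and right-handed versions of Lemma \ref{kl-cor}(a) gives $P_{y',w}=P_{s\act y',w}$ for any $y' \in \I$, while Corollary \ref{main-recurrence}(a) gives the matching identity $\Psig_{y',w}=\Psig_{s\act y',w}$. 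These reductions allow us to replace $y$ and $z$ by $s\act y$ and $s\act z$ when needed without affecting any of the four polynomials $P_{y,w}, P_{z,w}, \Psig_{y,w}, \Psig_{z,w}$, so we may assume $s \notin \Des(y)\cup\Des(z)$. Exactly as in the proof of Theorem \ref{atypical-thm}, the triple $(y,z,w)$ then falls under the hypothesis of one of Propositions \ref{p1}, \ref{p2}, \ref{p3}, or \ref{p4}.

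In each of these four cases, forming $\tfrac{1}{2}(P_{y,z;w}\pm\Psig_{y,z;w})$ of the two displayed recurrences expresses $P^\pm_{y,z;w}$ as a sum of two kinds of contributions: (i) terms of the form $q^j P^\pm_{y^\dag,z^\dag;w^\dag}$ where $\rho(w^\dag) < \rho(w)$, which lie in $\NN[q]$ by the inductive hypothesis, and (ii) pure $P$-terms $q^j P_{y^\dag,z^\dag;w^\dag}$ arising from halving the extra even contributions, which lie in $\NN[q]$ by Dyer's Theorem \ref{dyer-thm1}. In every case the indices satisfy $y^\dag\le z^\dag$, so both types of terms are well-defined and nonnegative. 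Summing, we conclude $P^\pm_{y,z;w}\in\NN[q]$, and taking $y=w$ (for which $P^+_{w,w}=1$ and $P^-_{w,w}=0$) finally gives $P^\pm_{y,w}\in\NN[q]$.

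The main obstacle is purely bookkeeping: one must verify term-by-term in each of the four recurrences that the mixed sum and difference really do reorganize into the form described above, in particular that the $P$-indices appearing in the even contributions always satisfy the inequality $y^\dag\le z^\dag$ required to invoke Theorem \ref{dyer-thm1}. This follows from the explicit descriptions of $a, y', z', w', u_i, z_i, y'', z'', w''$ given in the proofs of Propositions \ref{p2}--\ref{p4}, combined with the standard fact that the $\act$-action of $W$ on $\I$ in the universal case preserves Bruhat order on elements whose reduced $\I$-expressions all begin with the same prefix.
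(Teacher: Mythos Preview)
Your proposal is correct and follows essentially the same inductive scheme as the paper: subtract part (a) from part (b) in each of Propositions \ref{p1}--\ref{p4} and observe that the discrepancy consists only of even nonnegative multiples of ordinary $P$-terms, so that the recurrences for $P^\pm_{y,z;w}$ have manifestly nonnegative coefficients. The paper streamlines the argument slightly by treating $P^+$ without induction (simply summing the already-established nonnegativity of $P_{y,z;w}$ and $\Psig_{y,z;w}$ from Theorems \ref{dyer-thm1} and \ref{atypical-thm}, with integrality from Proposition \ref{p-parity}) and reserving the induction for $P^-$ alone; your version runs the same induction for both signs, which is a harmless redundancy. One small slip: in your final sentence you should take $z=w$, not $y=w$, to deduce $P^\pm_{y,w}\in\NN[q]$ from the difference result.
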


\begin{proof} Recall that the coefficients of $P_{y,z;w} \pm \Psig_{y,z;w}$ are all even by Proposition \ref{p-parity}.
Since $P_{y,z;w}$ and $\Psig_{y,z;w}$ both have positive coefficients  by Theorems \ref{dyer-thm1} and \ref{atypical-thm}, it suffices just to show that $P_{y,z;w} - \Psig_{y,z;w} \in \NN[q]$ for $y,z,w \in \I$ with $y< z$. One can prove this fact by induction on $\rho(w)$ using  the same argument as in the proof of Theorem \ref{atypical-thm}. The same inductive argument works because the differences between parts (a) and (b) in each of our propositions in this section involves only  polynomials $P_{y,z;w} \in \NN[q]$ and  differences $P_{y,z;w} - \Psig_{y,z;w}$.
\end{proof}

\subsection{Structure constants}\label{structure-sect}

In the rest of this paper, we redirect our focus to Conjecture \descref{C$'$}. 
Continue to assume $(W,S)$ is a universal Coxeter system.
This section describes an inductive method of computing the Laurent polynomials $\( h_{x,y;z}\)_{x,y,z \in W}$ and $\( h^\sigma_{x,y;z}\)_{x \in W,\hs  y,z\in \I}$, which we recall from \eqref{h-def} are the structure constants in $\cA = \ZZ[v,v^{-1}]$ satisfying
\[ c_x c_y = \sum_{z \in W} h_{x,y;z} c_z \in \cH_q \qquand C_x A_y = \sum_{z \in \I} h^\sigma_{x,y;z} A_z \in \cM_{q^2}.\]
%
%
We begin by recollecting some relevant  results of Dyer \cite{Dyer} concerning $h_{x,y;z}$ in the universal case. The following  appears as \cite[Definition 3.11]{Dyer}. 

\begin{definition}\label{dyerdef} 
Assume $(W,S)$ is a universal Coxeter system. Let $w \in W$ and $n = \ell(w)$, and  suppose   $s_i \in S$ such that $w =s_1s_2\cdots s_n$.
For each integer $j \in \ZZ$,   define $c(w,j) \in \cH_q$ recursively according to the following cases:

\ben
\item[(a)] If $ 2 \leq j \leq n-1$ (so that $n\geq 3$) and $s_{j-1} = s_{j+1}$, then  set
\[ c(w,j) = c_{w'} + c(w',j-1),\qquad\text{where }w' = s_1 \cdots \widehat{s}_j \widehat{ s}_{j+1} \cdots s_n.\]
Here, we write $\widehat{s}_j$ to indicate that the factor $s_j$ is omitted.

\item[(b)] Otherwise  set $c(w,j) = 0$. 

\een
\end{definition}

The following result of Dyer \cite[Theorem 3.12]{Dyer} gives the decomposition of the product $c_xc_y$ in terms of the Kazhdan-Lusztig basis of $\cH_q$, and shows that the Laurent polynomials $\( h_{x,y;z}\)_{x,y,z \in W}$ have nonnegative coefficients, and are in fact polynomials in $v+v^{-1}$ with nonnegative integer coefficients. (This latter property fails for other Coxeter systems.) 
%

\begin{theorem}[Dyer \cite{Dyer}] \label{c-structure}
Assume $(W,S)$ is  universal. 
Let $x,y \in W$ and $n = \ell(x)$. Then
\[ c_xc_y = \begin{cases} 
 (v  +  v^{-1}) \( c_{xsy} + c(xsy,n) \)
 &
 \text{if
$\mathrm{Des}_R(x) = \Des(y) = \{s\} \neq \varnothing$}
\\
 c_{xy} + c(xy,n) + c(xy,n+1)
&
\text{
otherwise.
}
\end{cases}
\]
%
%

\end{theorem}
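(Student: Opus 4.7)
I would prove the formula by induction on $\ell(x) \geq 0$. The base case $\ell(x) = 0$ forces $x = 1$, so Case~1 of the theorem cannot occur (its hypothesis requires $\mathrm{Des}_R(x) = \{s\}$) and Case~2 simply asserts $c_1 c_y = c_y + c(y,0) + c(y,1) = c_y$, which holds because $c(y,j) = 0$ whenever $j \leq 1$ by Definition~\ref{dyerdef}(b). For the inductive step with $\ell(x) = n \geq 1$, Observation (c) gives a \emph{unique} $s \in \mathrm{Des}_R(x)$, so we may write $x = x's$ with $\ell(x') = n-1$. The right-handed version of the standard Kazhdan--Lusztig multiplication rule yields
\[
c_x = c_{x'} c_s \;-\; \sum_{z \,:\, zs<z<x'} \mu(z,x') \, c_z,
\]
and hence $c_x c_y = (c_{x'} c_s) c_y - \sum_z \mu(z,x') \, c_z c_y$. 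The strategy is to compute the first term by applying the left multiplication rule for $c_s$ to $c_{x'} c_y$ (which is known by induction on $\ell(x')=n-1$) and to handle the sum using induction applied to each $c_z c_y$ (valid since $\ell(z) < \ell(x') < \ell(x)$).

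The proof then splits according to which of the two cases the target product $c_x c_y$ lies in. If $\Des(y) = \{s\}$ (Case~1), then $c_s c_y = (v+v^{-1}) c_y$, while $c_{x'} c_y$ is of Case~2 type (since $x'$ ends in a letter distinct from $s$ by universality of $(W,S)$), so induction expresses it as $c_{x'y} + c(x'y,n-1) + c(x'y,n)$. One then shows that the sum of $\mu$-corrections precisely accounts for turning this into $(v+v^{-1})(c_{xsy} + c(xsy,n))$, using the identity $xsy = x'y$ (since $x = x's$ and $y = sy'$ force $xsy = x'y$, with $y' = sy$). If $\Des(y) \ne \{s\}$ (Case~2), then $s \notin \Des(y)$ by Observation~(c) and $c_s c_y = c_{sy} + \sum_u \mu(u,y) c_u$, so we reduce to two applications of the inductive hypothesis.

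The key auxiliary input is an explicit description of the $\mu$-coefficients in the universal case: by Dyer's work, $\mu(z,w) \in \{0,1\}$, and $\mu(z,w)=1$ iff $z$ is obtained from the unique reduced expression $s_1\cdots s_m$ of $w$ by a specific deletion pattern (removing an initial segment $s_1\cdots s_j$ together with a matched adjacent pair later on). This description is precisely what is encoded in the recursive Definition~\ref{dyerdef} of $c(w,j)$: the identity
\[
c(w',j-1) \;=\; c(w,j) - c_{w'} \quad (\text{where } w' = s_1 \cdots \widehat{s}_j \widehat{s}_{j+1} \cdots s_n)
\]
makes $c(w,j)$ the exact sum of $c_z$'s that appear as $\mu$-corrections when one multiplies $c_s$ into a product of appropriate form.

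\textbf{Main obstacle.} The genuine difficulty is the bookkeeping that matches the $\mu$-correction terms $\sum \mu(z, x') c_z c_y$ to the prescribed linear combinations $c(xy,n)$, $c(xy,n+1)$, and $c(xsy,n)$ in the target formula. This requires an auxiliary lemma of the shape: for $s \in S$ and $w \in W$, the product $c_s \cdot c(w,j)$ can be written as $c(sw, j+1) - c(sw, j'')$ (for an explicit $j''$) plus a boundary $\mu$-contribution, so that telescoping across the nested recursion in $c(w,j)$ produces exactly the desired sum $c(xy,n) + c(xy,n+1)$. Establishing this lemma cleanly---in particular, tracking the ``double cancellation'' that happens when both $\mathrm{Des}_R(x)$ and $\Des(y)$ share the letter $s$, which is what forces the $(v+v^{-1})$ prefactor in Case~1---is where essentially all the combinatorial content of the theorem lives.
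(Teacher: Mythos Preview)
Your inductive strategy---reduce $c_x c_y$ to $c_{x'}(c_s c_y)$ minus correction terms and then invoke induction on $\ell(x')<\ell(x)$---is the right one, and it matches both Dyer's argument and the paper's proof of the twisted analogue Theorem~\ref{A-structure} (the paper itself cites Theorem~\ref{c-structure} from \cite{Dyer} without reproving it). What you have missed is the simplification that makes the universal case work: the correction sum $\sum_{z : zs<z} \mu(z,x')\,c_z$ contains \emph{at most one term}. Concretely, if $n\geq 2$ write $x = x'' r s$ with $\ell(x'') = n-2$; then $c_x = c_{x'} c_s - c_{x''}$ when $\mathrm{Des}_R(x'') = \{s\}$, and $c_x = c_{x'} c_s$ otherwise. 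This is precisely \eqref{noted} in the paper, and it follows directly from Lemma~\ref{dyer-lem} (or from Dyer's explicit formula for $P_{y,w}$) prior to and independently of the theorem. Your characterization of $\mu(z,w)=1$ as arising from ``removing an initial segment together with a matched adjacent pair'' is not correct, and the many-term picture it suggests is what leads you to postulate an auxiliary telescoping lemma that is not needed and whose hypotheses do not match the actual situation.

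With the one-term correction in hand the inductive step becomes a straightforward finite case analysis: compute $c_s c_y$ (two cases, according as $s\in\Des(y)$ or not), substitute the inductive expressions for $c_{x'}\cdot(\text{result})$ and, when present, for $c_{x''} c_y$, and verify from Definition~\ref{dyerdef} that the outcome agrees with the asserted formula. The paper carries out exactly this case analysis (in five cases) for Theorem~\ref{A-structure}; the untwisted argument here is entirely parallel and slightly simpler.
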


\begin{remark} The preceding theorem differs from the corresponding statement in \cite{Dyer} as a result of our notational conventions. In \cite[Theorem 3.12]{Dyer}, Dyer writes ``$C_w$'' to denote the element of $\cH_q$ which in our notation is written
\[ \sum_{y \in W} (-v)^{\ell(w)-\ell(y)} \cdot P_{y,w}(q^{-1}) \cdot v^{-\ell(y)} \cdot t_y.\]
 This element is just $(-1)^{\ell(w)} \cdot \iota(c_w)$, where $\iota$ is the $\cA$-algebra automorphism of $\cH_q$ with $t_w \mapsto (-q)^{\ell(w)} \cdot t_{w^{-1}}^{-1}$ for 
$w \in W$. (When checking this, it helps to recall $\overline {c_w } =c_w$.) This observation transforms Dyer's results into what is stated here.
\end{remark}


Moving on to the analogous decomposition of $C_x A_y$, we have this lemma. Recall from Theorem \ref{mult-thm} that if $s \in S$ then $C_s = q^{-1} (T_s + 1) \in \cH_{q^2}$.

\begin{lemma}\label{universal-structure-lem} Assume $(W,S)$ is a universal Coxeter system. Suppose $s \in S$ and $w \in \I$. 
\ben

\item[(a)] If $s \in \Des(w)$ then $C_s A_w = \(q+q^{-1}\) A_w$.
\item[(b)] If $s \notin \Des(w)$ then 
\[ C_s A_w = \begin{cases} A_{sws^*} + A_{r  w r^*}&\text{if $\Des(w) = \{r\}$ and $\Des(r wr^*) = \{s\}$}
\\
A_{sws^*} + A_s &\text{if $w \in S$ and $s=s^*$}
\\
(v+v^{-1}) A_s &\text{if $w=1$ and $s=s^*$}
\\
A_{sws^*}&\text{otherwise}.
\end{cases}
\]
\een
\end{lemma}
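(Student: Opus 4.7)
\noindent\emph{Proof plan.} Part (a) is the first clause of Theorem~\ref{mult-thm} applied verbatim.

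For part (b), we apply Theorem~\ref{mult-thm} in the case $s \notin \Des(w)$, distinguishing whether $sw = ws^*$. A first observation is that in a universal Coxeter system, the condition $sw = ws^*$ with $s \notin \Des(w)$ can hold only when $w = 1$ and $s = s^*$: otherwise $sw$ and $ws^*$ would be the unique reduced expressions of elements with different first letters. In that degenerate case the summation in Theorem~\ref{mult-thm} is empty since no $y \in \I$ satisfies $sy < y < s$, so we obtain $C_s A_1 = (v + v^{-1}) A_s$, the third clause of (b).

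Otherwise $sw \neq ws^*$, and Theorem~\ref{mult-thm} yields $C_s A_w = A_{sws^*} + \sum_{y \in \I,\, sy < y < sws^*} \msig(y \xrightarrow{s} w) A_y$. Each $y$ in this sum has $\Des(y) = \{s\}$ since descent sets are singletons. We split on $w$: when $w = 1$ (which forces $s \neq s^*$) no $y \in \I$ meets the constraints and the sum is empty, giving the ``otherwise'' clause; when $w = r \in S \cap \I$ with $s = s^*$ the only contributor is $y = s$, with $\msig(s \xrightarrow{s} r) = 1$ by Lemma~\ref{msig-lem} (second subcase), producing the second clause; when $\ell(w) \geq 2$ and $\Des(rwr^*) = \{s\}$ for $r \in \Des(w)$, Lemma~\ref{msig-lem} (first subcase) identifies $y = rwr^*$ as the unique contributor with $\msig = 1$, producing the first clause; and in all remaining scenarios the sum is empty, giving the fourth clause.

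The main obstacle is to rule out contributions from $y \not\leq w$, which fall outside the hypotheses of Lemma~\ref{msig-lem}. For such $y$ the defining recurrence for $\mu^\sigma(y,w;s)$ collapses: $\nu^\sigma(y, w) = 0$ because $\Psig_{y, w} = 0$; the cross terms $\mu^\sigma(y, x) \mu^\sigma(x, w)$ vanish for want of intermediate $x$; and the $\delta_{sw, ws^*}$-term vanishes by our standing hypothesis. Only $\delta_{sy, ys^*} \mu^\sigma(sy, w)$ can survive, and in a universal Coxeter system the uniqueness of reduced expressions forces $sy = ys^*$ with $s \in \Des(y)$ to entail $y = s$ and $s = s^*$. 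Thus the sole unaccounted possibility is a contribution of $\mu^\sigma(1, w)\, A_s$, and a short induction based on Corollary~\ref{main-recurrence} establishes $\Psig_{1, w} = 1$ for all $w \in \I$ in the universal setting, so that $\mu^\sigma(1, w) = 0$ whenever $\ell(w) \geq 2$ and the potential extra term indeed vanishes.
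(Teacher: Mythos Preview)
Your proof is correct and follows essentially the same route as the paper: apply Theorem~\ref{mult-thm} and then use Lemma~\ref{msig-lem} to evaluate the summands $\msig(y\xrightarrow{s}w)$. Two small remarks are worth making.

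First, your citation of Lemma~\ref{msig-lem} for the pair $(y,w)=(s,r)$ is formally outside that lemma's stated hypothesis $y\le w$, since $s\not\le r$. You are right to flag this in your final paragraph, and your direct analysis there (collapsing the defining formula for $\mu^\sigma(y,w;s)$ when $y\not\le w$) is what actually justifies the contribution $A_s$ in the second clause; the third-paragraph citation is redundant. The paper itself glosses over this point and simply invokes Lemma~\ref{msig-lem}, so in this respect your argument is the more careful one.

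Second, the assertion that $\Psig_{1,w}=1$ for all $w\in\I$ is stronger than what you need, and a ``short induction based on Corollary~\ref{main-recurrence}'' does not obviously yield it (part~(a) of that corollary replaces $y$ by $s\act y$, not $w$ by $s\act w$). What you actually require is the weaker statement $\mu^\sigma(1,w)=0$ for $\ell(w)\ge 2$, and this follows immediately: writing $\Des(w)=\{r\}$, Corollary~\ref{main-recurrence}(a) gives $\Psig_{1,w}=\Psig_{r\act 1,w}$, and since $r\act 1<w$ with $\ell(r\act 1)\ge 1$, the degree bound in Theorem-Definition~\ref{twisted-thmdef}(c) forces the coefficient of $v^{\ell(w)-1}$ to vanish. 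This is precisely the argument used inside the paper's proof of Lemma~\ref{msig-lem}.
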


\begin{proof}
%
Part (a) is immediate from Theorem \ref{mult-thm}. 
If $w =1$ then $\msig(y\xrightarrow{s}w)=\msig(y\xrightarrow{s}1) = 0$ for all $y \in \I$ with $sy<y$ so by  Theorem \ref{mult-thm} we have 
$C_s A_1 = (v+v^{-1})^c A_{s\act 1}$ where $c = \delta_{s,s^*}$. This proves part (b) when $w=1$. 

For the remaining cases, assume $w \neq 1$ and  $\Des(w) = \{ r\} \neq \{s\}$.
Combining Theorem \ref{mult-thm} and Lemma \ref{msig-lem} gives 
$C_s A_w = A_{s\act 1} + \sum_{y \in X} A_y$, where  $X \subset \I$ is the subset which contains $s$ if $s =s^*$ and $w=r \in S$, and which contains $rwr^*$ if $rwr^* \in \I$ and $\Des(rwr^*) = \{s\}$.
Since  $rwr^*$  always belongs to $\I$ and since $\Des(rwr^*) = \{s\}$ implies $w\notin S$, the set $X$ contains at most one element and our formula $C_s A_w = A_{s\act 1} + \sum_{y \in X} A_y$ reduces to the cases in the lemma.
\end{proof}

We now make this definition, after Definition \ref{dyerdef}. 

\begin{definition}\label{afterdyerdef} 
Assume $(W,S)$ is a universal Coxeter system. Let $w \in \I$ and $n = \rho(w)$, and  suppose   $s_i \in S$ such that $w =s_1 \act s_2 \act \cdots \act s_n\act 1$.
For each integer $j \in \ZZ$,   define $A(w,j) \in \cM_{q^2}$ recursively according to the following cases:


\ben
\item[(a)]  If $2 \leq j \leq n-1$ (so that $n\geq 3$) and $s_{j-1} = s_{j+1}$, then  set 
\[ A(w,j) = A_{w'} + A(w',j-1),\qquad\text{where }
w' = s_1\act  \cdots \act  \widehat{s}_j \act  \widehat{ s}_{j+1} \act \cdots \act s_n.\]
Here, we again write $\widehat{s}_j$ to indicate that the factor $s_j$ is omitted.

\item[(b)] If $j = n$ and $n\geq 2$ and $\{s_{n-1},s_n\} \subset \I$, then  set
\[ A(w,j) = A_{w'} + A(w',n-1),\qquad\text{where } w' = s_1\act \cdots\act s_{n-1}.\]

\item[(c)] Otherwise  set $A(w,j) = 0$.

\een

\end{definition}

%
%

Using this notation, the following analog of Theorem \ref{c-structure} now decomposes the product $C_x A_y$ in terms of the distinguished basis $\(A_z\)_{z \in \I}$ of $\cM_{q^2}$. This result shows that the Laurent polynomials $\( h^\sigma_{x,y;z}\)_{x \in W, y,z \in \i}$ have nonnegative coefficients, but in contrast to our previous situation, $h^\sigma_{x,y;z}$ does not typically have nonnegative coefficients when written as a  polynomial in $v+v^{-1}$.

\begin{theorem}\label{A-structure} Assume $(W,S)$ is  universal. 
If $x \in W$ and $y  \in \I$  and  $n =\ell(x)$, then
\[ C_x A_y = \begin{cases} 
 (v+v^{-1}) \(A_{x \act 1} + A(x \act 1,n)\)&\text{if $x \neq 1$ and $y =1$ and
$ \mathrm{Des}_R(x) \subset \I $}
\\
(q+q^{-1}) \( A_{xs\act y} + A(xs\act y,n)\)&\text{if $\mathrm{Des}_R(x) = \Des(y) = \{s\} \neq \varnothing$}
\\
A_{x \act y} + A(x \act y,n) + A(x\act y,n+1)&\text{otherwise.}

\end{cases}
\]

%
%
%
%
%
%
%
\end{theorem}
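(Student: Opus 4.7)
The plan is to prove the theorem by induction on $n = \ell(x)$, in parallel with Dyer's proof of Theorem~\ref{c-structure}. The base case $n=0$ is immediate: then $x=1$ and $C_1 A_y = A_y$, and since both $A(y,0)$ and $A(y,1)$ vanish directly from Definition~\ref{afterdyerdef}, the ``otherwise'' clause of the theorem correctly reduces to this. For the inductive step, let $s$ be the unique element of $\mathrm{Des}_R(x)$ and write $x = x's$ with $\ell(x')=n-1$. The $\cH_{q^2}$-analog of Dyer's Theorem~\ref{c-structure}, obtainable from the statement in $\cH_q$ via the isomorphism $v \mapsto v^2$ carrying $c_w$ to $C_w$, applied to $C_{x'} C_s$, puts us in its ``otherwise'' clause since $s\notin\mathrm{Des}_R(x')$; combined with $C(x,n)=0$ (a direct length check), this yields
\[
C_{x'} C_s = C_x + C(x,n-1),
\]
where $C(x,n-1)$ is the natural analog of Dyer's $c(x,n-1)$ in $\cH_{q^2}$. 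Rearranging gives $C_x A_y = C_{x'}(C_s A_y) - C(x,n-1) A_y$, and the right-hand side becomes accessible via Lemma~\ref{universal-structure-lem} (to expand $C_s A_y$) together with the inductive hypothesis (to expand $C_{x'}$ applied to the resulting basis elements, and to expand $C_{x''} A_y$ for the shorter element $x''$ satisfying $C(x,n-1) = C_{x''}$).

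The main obstacle will be the case analysis needed to assemble these outputs into the three cases of the theorem. The nontrivial scalars driving the three cases come from Lemma~\ref{universal-structure-lem}: the $(v+v^{-1})$ factor appearing when $y=1$ and $s=s^*$ generates the first case of the theorem, the $(q+q^{-1})$ factor of Lemma~\ref{universal-structure-lem}(a) triggered when $s \in \Des(y)$ generates the second, and the generic branch produces the third. The nontrivial complication is that the other clauses of Lemma~\ref{universal-structure-lem}(b) create ``extra'' basis elements beyond $A_{s \act y}$: when $\Des(y)=\{r\}$ with $\Des(ryr^*)=\{s\}$, an extra $A_{ryr^*}$ appears, and when $y\in S$ with $s=s^*$, an extra $A_s$ appears. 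These contributions have no counterpart in Dyer's calculation in $\cH_q$.

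The key structural insight is that clause~(b) of Definition~\ref{afterdyerdef}, which has no analog in Dyer's Definition~\ref{dyerdef}, is engineered precisely to absorb these extra $A_{ryr^*}$ and $A_s$ contributions. Its recursive definition ensures that successive such extras, produced as the induction applies $C_{x'}$ to the outputs of Lemma~\ref{universal-structure-lem}, stack up correctly into $A(x\act y,n)$ and $A(x\act y,n+1)$. The bulk of the remaining work is therefore an inductive bookkeeping argument: in each of the three cases, one substitutes the inductive formula for $C_{x'}$ applied to each relevant basis element, cancels the $C(x,n-1) A_y$ correction using the shorter instance of the inductive hypothesis, and matches what remains against the formal recurrence for $A(w,j)$. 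I expect the matching of these two recurrences --- Definition~\ref{afterdyerdef} on one side, and the recursion implicit in Lemma~\ref{universal-structure-lem} together with the identity $C_{x'} C_s = C_x + C(x,n-1)$ on the other --- to be the delicate step.
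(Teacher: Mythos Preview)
Your proposal is correct and follows essentially the same approach as the paper's proof. Both argue by induction on $n=\ell(x)$, both rely on the identity $C_x = C_{xs}C_s - C(x,n-1)$ (the paper writes it as $C_x = C_{xs}C_s - C_{x'}$ after observing that $C(x,n-1)$ is either zero or a single $C_{x'}$), and both reduce the inductive step to Lemma~\ref{universal-structure-lem} together with the inductive hypothesis. The paper takes $n\in\{0,1\}$ as base cases and then splits the inductive step into five explicit cases (i)--(v) according to the shape of $y$, working out one in detail and leaving the rest to the reader; your more conceptual description of how clause~(b) of Definition~\ref{afterdyerdef} absorbs the extra terms from Lemma~\ref{universal-structure-lem}(b) corresponds exactly to the paper's cases (iii) and (iv), while your discussion of the $(v+v^{-1})$ and $(q+q^{-1})$ scalars matches cases (i) and (ii).
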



\begin{proof}
The proof is similar to that of \cite[Theorem 3.12]{Dyer}, and proceeds by induction on $n$. If $n \in \{0,1\}$ then the theorem reduces to Lemma \ref{universal-structure-lem} (checking this fact is a healthy exercise which we leave to the reader), so we may assume $\ell(x) \geq 2$ and that 
\[ x=x'rs\qquad\text{for some $x' \in W$ and $r,s \in S$ with $\ell(x') = \ell(x)-2$}.\]
It follows from Theorem \ref{c-structure} (noting that the $\ZZ$-linear map $\cH_{q} \to \cH_{q^2}$ with $v^n \mapsto q^n$ and $t_w \mapsto T_w$ is a ring embedding with $c_w \mapsto C_w$) that 
\be\label{noted} C_x = \begin{cases} C_{xs} C_s - C_{x'} &\text{if $\mathrm{Des}_R(x') = \{s\}$} \\ 
C_{xs} C_s &\text{otherwise.}
\end{cases}\ee
It suffices to consider the following five cases, exactly one of which must occur:
\ben
\item[(i)] Suppose $y=1$.  
Then $A(x\act y,n+1) = 0$ and so we  wish to show that
$
C_x A_1 =  (v+v^{-1})^c\cdot \( A_{x\act 1} + A(x\act 1,n)\)
$ 
where $c = | \{s\} \cap \I|$.

\item[(ii)]  Suppose $s \in \Des(y)$. We then  wish to show that
$
C_x A_y = (q+q^{-1}) \( A_{xs\act y} + A(xs\act y,n)\).
$

\item[(iii)] Suppose $y \in S$ and $ s\notin \Des(y)$ and $s =s^*$.  Then $A(x\act y,n+1) = A_{x\act 1} + A(x\act1 ,n)$  and so we wish to show  
$
 C_x A_y = A_{x \act y} + A(x\act y,n) + A_{x\act 1} + A(x\act1 ,n).
 $

\item[(iv)] Suppose $\rho(y) = 1$ and $ s\notin \Des(y)$ but either $y \notin S$ or $s\neq s^*$.  Then $A(x\act y,n+1) = 0$ and so we wish to show  
$
C_x A_y = A_{x\act y} + A(x\act y,n).
$

\item[(v)] Suppose $\rho(y) \geq 2$ and $ s\notin \Des(y)$.
We then want
$
 C_x A_y = A_{x\act y} + A(x\act y,n) + A(x\act y,n+1).
$
\een
The proof of each case is similar, and involves substituting \eqref{noted} for $C_x$ and then applying Lemma \ref{universal-structure-lem} and induction.
 Case (v) is the most complicated, but its proof is nearly the same as that 
of \cite[Lemma 6.2]{Dyer}. We demonstrate (i)  as an example and leave the rest to the reader.

For case (i), suppose $y=1$ and let $c = | \{s\} \cap \I|$; recall that $\mathrm{Des}_R(x) = \{s\}$ by assumption.
If $\mathrm{Des}_R(x') \neq \{s\}$ then $C_x = C_{x'r} C_s$ by \eqref{noted} and $A(x\act 1,n-1)=0$, in which case by  Lemma \ref{universal-structure-lem} and then induction  we get
\[ \ba C_x A_1 
&=C_{x'r} C_s A_1
\\&
= (v+v^{-1})^c\cdot  C_{x'r} A_{s \act 1}
\\&= (v+v^{-1})^c\cdot ( \A_{x \act 1} + \underbrace{A(x\act 1,n-1)}_{=0}+ A(x\act 1,n)),\ea\]
which is what we want to show. 
Alternatively, if $\mathrm{Des}_R(x') = \{s\}$ then $C_x = C_{x'r} C_s-C_{x'}$ by \eqref{noted} and $A(x\act 1,n-1) =A_{x'\act 1} + A(x'\act 1,n-2)$, so by induction
$C_{x'} A_1 = (v+v^{-1})^c \cdot A(x\act 1,n-1)$. In this case  by Lemma \ref{universal-structure-lem} and then induction we have
\[ \ba C_x A_1 
&=
( C_{x'r} C_s-C_{x'})A_1
\\&
= (v+v^{-1})^c\cdot  C_{x'r} A_{s \act 1} - C_{x'} A_1
\\&= (v+v^{-1})^c\cdot  \(A_{x \act 1} +  A(x\act 1,n)\)  + \underbrace{(v+v^{-1})^c\cdot A(x\act 1,n-1)-  C_{x'} A_1}_{=0}
\ea\]
which is again  the desired formula.
\end{proof}

Wrapping up, we have this corollary immediately from Theorems \ref{c-structure} and \ref{A-structure}.

\begin{corollary}\label{A-cor} If $(W,S)$ is a universal Coxeter system then each of the families \[\(h_{x,y;z}\)_{x,y,z \in W}\qquand (\wt h_{x,y;z})_{x,y,z \in W}\qquand (h^\sigma_{x,y;z})_{x \in W,\hs y,z\in \I} \] consists of Laurent polynomials in $\cA = \ZZ[v,v^{-1}]$ with nonnegative  coefficients.
\end{corollary}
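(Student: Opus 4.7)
The plan is to extract the three positivity statements directly from the two preceding structure theorems, handling $h_{x,y;z}$ and $h^\sigma_{x,y;z}$ as immediate readings of the explicit formulas and then reducing the triple-product coefficients $\wt h_{x,y;z}$ to the pairwise ones.

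First I would handle the pair $(h_{x,y;z})$ using Theorem \ref{c-structure}. The right-hand side of that theorem is in each case either $(v+v^{-1})(c_{xsy}+c(xsy,n))$ or $c_{xy}+c(xy,n)+c(xy,n+1)$. Unwinding Definition \ref{dyerdef} recursively, each $c(w,j)$ is a finite sum (with $\ZZ_{\geq 0}$ coefficients) of Kazhdan--Lusztig basis elements. Hence $c_xc_y$ expands as a $\ZZ_{\geq 0}[v+v^{-1}]$-combination of the $c_z$, so every $h_{x,y;z}$ lies in $\ZZ_{\geq 0}[v+v^{-1}]\subset\NN[v,v^{-1}]$.

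Next I would handle $(h^\sigma_{x,y;z})$ using Theorem \ref{A-structure} in exactly the same manner. The three cases of that theorem express $C_xA_y$ as one of
\[
(v+v^{-1})\bigl(A_{x\act 1}+A(x\act 1,n)\bigr),\qquad (q+q^{-1})\bigl(A_{xs\act y}+A(xs\act y,n)\bigr),
\]
or $A_{x\act y}+A(x\act y,n)+A(x\act y,n+1)$. Unwinding Definition \ref{afterdyerdef} shows that every $A(w,j)$ is a $\NN$-sum of standard basis vectors $A_{w'}$, so each $h^\sigma_{x,y;z}$ is a Laurent polynomial in $v$ with nonnegative integer coefficients (in fact a nonnegative integer multiple of $1$, $v+v^{-1}$, or $v^2+v^{-2}$).

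Finally I would treat $\wt h_{x,y;z}$ by reducing to the pairwise case already handled. Using associativity and the definition of $h_{x,y;z}$ twice, we compute
\[
c_xc_y c_{{x^*}^{-1}}=\sum_{u\in W}h_{x,y;u}\,c_u c_{{x^*}^{-1}}=\sum_{u,z\in W}h_{x,y;u}\,h_{u,{x^*}^{-1};z}\,c_z,
\]
so by linear independence of the $c_z$,
\[
\wt h_{x,y;z}=\sum_{u\in W}h_{x,y;u}\,h_{u,{x^*}^{-1};z}.
\]
Each factor on the right is in $\NN[v,v^{-1}]$ by the first step, so $\wt h_{x,y;z}$ lies in $\NN[v,v^{-1}]$ as well.

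There is essentially no obstacle here beyond bookkeeping: the corollary is a pure consequence of Theorems \ref{c-structure} and \ref{A-structure} (both of whose proofs do the real work), together with the trivial associativity argument for the triple-product coefficients. The only thing one must verify carefully is that $A(w,j)$ and $c(w,j)$ really do unfold to sums of basis vectors with coefficient $1$, which is immediate from their recursive definitions.
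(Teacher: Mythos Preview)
Your proposal is correct and follows exactly the approach the paper intends: the corollary is stated as an immediate consequence of Theorems \ref{c-structure} and \ref{A-structure}, and your write-up simply spells out the details (that $c(w,j)$ and $A(w,j)$ unfold to nonnegative integer combinations of basis elements, and that $\wt h_{x,y;z}$ is a sum of products of the pairwise $h$'s by associativity). The paper compresses all of this into one line, but there is no difference in substance.
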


\subsection{Proof of the positivity conjecture for  universal structure constants
}\label{last-sect}

As previously, $(W,S)$ is a universal Coxeter system with a fixed $S$-preserving involution $* \in \Aut(W)$. 
We devote this final section to proving  Conjecture \descref{C$'$} for universal Coxeter systems$-$i.e., that the  Laurent polynomials $h^\pm_{x,y;z}  = \tfrac{1}{2} \( \wt h_{x,y;z} \pm h^\sigma_{x,y;z} \)$ defined in Section \ref{conjecture-sect} always have nonnegative coefficients.


To begin, it is useful to recall the following  notation from the proof of Proposition \ref{h-parity}.
Given $w \in W$, let $w^\dag = {w^*}^{-1}$ and more generally
let 
$h \mapsto h^\dag$ denote the $\cA$-linear map $\cH_q \to \cH_q$ with $(t_w)^\dag = t_{w^\dag}$ for $w \in W$. Observe that $\dag$ is an anti-automorphism (of $\cA$-algebras) and that  $(c_w)^\dag = c_{w^\dag}$ for all $w \in W$ by Lemma \ref{kl-cor}.
We now state two technical lemmas associated with 
Definitions \ref{dyerdef} and \ref{afterdyerdef}.

\begin{lemma}\label{a-lem} Assume $(W,S)$ is a universal Coxeter system.
Suppose $u,t \in W$ such that $\ell(u\act t) = 2\ell(u) + \ell(t)$ and $\ell(t) \in \{0,1\}$ and $t=t^*$. Fix an integer $n \leq \ell(u)$.
Then there exists a unique integer $k\geq 0$ and a unique  sequence of elements \[u=u_0> u_1 > \dots > u_k\] in $W$ (descending with respect to the Bruhat order), 
such that  
$c(ut,n) = \sum_{i=1}^k c_{u_it}$. 
This sequence  has the following additional properties:
\ben
\item[(a)] For each $0\leq i \leq k$ we have $\ell(u_i \act t) = 2\ell(u_i) + \ell(t)$.
\item[(b)] $c(utw,n) = \sum_{i=1}^k c_{u_itw} + c(u_ktw,n-k) $ for any  $w \in W$ with $\ell(utw) = \ell(u) + \ell(t) + \ell(w)$.

\item[(c)] $ A(u\act t,n) = \sum_{i=1}^k A_{u_i\act t} + \delta\cdot A(u_k\act t,n-k)$ where  $\delta = \begin{cases} 1& \text{if $n-k =\ell(u_k)+1$} \\ 0 &\text{otherwise}.\end{cases}$ 

\een

%
%
%
%

\end{lemma}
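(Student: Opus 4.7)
My approach is to unfold the recursive definitions of $c(ut,n)$ from Definition \ref{dyerdef} and $A(u\act t,n)$ from Definition \ref{afterdyerdef} in parallel. Write $u=s_1s_2\cdots s_m$ for the unique reduced expression of $u$; the hypothesis $\ell(u\act t)=2m+\ell(t)$ is equivalent to saying that the word $s_1s_2\cdots s_m\, t\, s_m^{*}\cdots s_1^{*}$ (with the obvious modification if $\ell(t)=0$) is reduced, i.e., has all adjacent letters distinct.

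For existence and uniqueness of the sequence $(u_i)$, I would iterate the recursion. At step $i\geq 1$, with pivot $j_i=n-i+1$ acting on the current word $w_{i-1}$ (starting from $w_0=ut$), either the case-(a) hypothesis of Definition \ref{dyerdef} holds and one removes positions $j_i,j_i+1$ to form $w_i$, or the process terminates with $k=i-1$. A short case analysis shows each $w_i$ has the form $u_it$ with $u_i<u_{i-1}$ and $\ell(u_i)=\ell(u_{i-1})-2$: the only nontrivial case is when $j_i+1$ equals the position of $t$ (possible only if $\ell(t)=1$), in which case the removal condition forces $s_{j_i-1}=t$, so the new word still ends in $t$. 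Summing yields $c(ut,n)=\sum_{i=1}^{k} c_{u_it}$, and uniqueness of $(k,u_0,\dots,u_k)$ follows from linear independence of the Kazhdan-Lusztig basis of $\cH_q$ (the $u_i$ being distinguishable by their distinct lengths $m-2i$). For claim (a), I would induct on $i$: each removal replaces a seam $(s_{j-1},s_j,s_{j+1},s_{j+2})$ by the single adjacency $(s_{j-1},s_{j+2})$, which satisfies $s_{j-1}=s_{j+1}\neq s_{j+2}$, so the surviving reduced word for $u_i\act t$ remains reduced. The sandwich-center conditions $s_{j_p}\neq t$ (when $\ell(t)=1$) and $s_{j_p}\neq s_{j_p}^{*}$ (when $\ell(t)=0$) are inherited from the initial hypothesis by tracking how the last letter of $u_i$ evolves: any straddle removal that changes this last letter replaces it by a letter equal to it (forced by the removal condition), so the required inequality propagates.

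For claim (b), the recursion for $c(utw,n)$ mirrors that for $c(ut,n)$ through the first $k$ steps. The hypothesis $\ell(utw)=\ell(u)+\ell(t)+\ell(w)$ says the reduced expression of $utw$ is the concatenation of those of $u$, $t$, and $w$; combined with the bound $k\leq m+\ell(t)-n$ coming from the stopping condition, this ensures each of the first $k$ pivots $j_i$ and its flanking positions remain inside the $u_{i-1}t$-part of the current word. The letter checks therefore agree, the same $u_i$ are produced, and one obtains $c(utw,n)=\sum_{i=1}^{k} c_{u_itw}+c(u_ktw,n-k)$; the residual generally does not vanish because the right boundary of $u_ktw$ lies farther than that of $u_kt$.

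For claim (c), case (a) of the $A$-recursion of Definition \ref{afterdyerdef} has exactly the same letter check and boundary condition as the $c$-recursion, and in the straddle case one verifies using $s_{m-1}=t=t^{*}$ that $s_1\act\cdots\act s_{m-1}\act 1$ coincides with $u_1\act t$ for $u_1=s_1\cdots s_{m-2}$, so the elements produced match. Unfolding case (a) for $k$ iterations gives $A(u\act t,n)=\sum_{i=1}^{k} A_{u_i\act t}+A(u_k\act t,n-k)$. By the definition of $k$, case (a) of Definition \ref{afterdyerdef} fails at the residual, so it can only be nonzero through case (b). Case (b)'s pivot condition reads $n-k=\rho(u_k\act t)=\ell(u_k)+\ell(t)$; when $\ell(t)=0$, the max-length property from (a) forces the last letter of $u_k$ to satisfy $s_{j_p}\neq s_{j_p}^{*}$, making case (b) fail regardless of the pivot, while for $\ell(t)=1$ the pivot condition becomes exactly $n-k=\ell(u_k)+1$, which is the definition of $\delta=1$. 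Thus whenever $\delta=0$ the residual vanishes, and whenever $\delta=1$ it is retained as stated. The main obstacle I expect is the combined bookkeeping in the straddle case: verifying in (a) that the maximal-length condition survives after removing $(s_m,t)$ when $s_{m-1}=t$ (and iterated cascades of such straddles), and verifying in (c) the identification of the $\I$-expression $(s_1,\dots,s_{m-1})$, obtained by dropping $s_m$ and $t$, with the natural $\I$-expression of $u_1\act t$. Both reduce to careful manipulations of the $\act$-action in a universal Coxeter system.
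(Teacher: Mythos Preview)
Your approach is correct and is essentially the same as the paper's own (sketched) proof: both arguments unfold Definitions \ref{dyerdef} and \ref{afterdyerdef} in parallel, using the key fact that the unique reduced expression for $ut$ and the unique reduced $\I$-expression for $u\ltimes t$ consist of the same sequence of simple generators. One minor remark: the ``iterated cascades of straddles'' you worry about cannot occur, since after a straddle at step $i$ the new pivot $j_{i+1}=j_i-1$ already exceeds $\ell(w_i)-1$, so the recursion terminates; this simplifies the bookkeeping in parts (a) and (c).
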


\begin{remark} Note that we may  have $k=0$ in this lemma; this indicates that $c(ut,n) = 0$. In this case the sums $\sum_{k=1}^n$ are considered to be zero, and we automatically have $\delta = 0$ in part (c) since $n < \ell(u_0)+1$ by hypothesis. 
\end{remark}

\begin{proof}
%

We  sketch the proof of this lemma, as everything derives from the definitions  in a straightforward way  by induction on $\ell(u)$.
The existence of the sequence of elements $u=u_0> u_1 > \dots > u_k$ follows from Definition \ref{dyerdef}  by inspection,
as does property (a).
Property (b) holds because 
the first $k+1$ terms in the expansion of $c(utw,n)$, which one gets by applying Definition \ref{dyerdef} successively, depend only on the first $n+k$ factors in the unique reduced expression for $utw$.
%
Part (c) follows from the fact that the same sequence of elements in $S$ gives both the unique reduced expression for $ut$ and the unique reduced $\I$-expression for $u\act t$. Noting this and comparing Definitions \ref{dyerdef} and \ref{afterdyerdef} (while remembering $n \leq \ell(u)$), we deduce that $A(u\act t,n) = \sum_{i=1}^k A_{u_i\act t} + A(u_k\act t,n-k)$, and that $A(u_k \act t,n-k)$ is zero unless $n-k =\rho(u_k \act t)$. The latter condition is equivalent to having both $\ell(t) = 1$ and $n-k = \ell(u_k) + 1$; however, if $\ell(t) = 0$ while $n-k = \ell(u_k) + 1$ then $A(u_k \act t,n-k)$ is  zero by definition.
\end{proof}

In what follows, we let $\Phi : \cM_{q^2} \to \cH_q$ denote the $\cA$-linear map with $A_w \mapsto c_w$ for $w \in \I$.


\begin{lemma}\label{b-lem} Assume $(W,S)$ is a universal Coxeter system.
Suppose $x \in W$ and $ s\in S \cap \I$ such that $s\notin \mathrm{Des}_R(x)$. If $n = \ell(x)$, then 
\[ c(x\act s,n+1) = \Phi\( A(x \act s,n+1) \).\]
\end{lemma}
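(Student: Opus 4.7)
The plan is to induct on $n = \ell(x)$. The base case $n = 0$ gives $x = 1$ and $x \act s = s$, so both $c(s, 1)$ and $A(s, 1)$ vanish because neither definition's recursive clause triggers at parameter $j = 1$ on an element of length/rank one; the identity holds trivially.

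For the inductive step, write the unique reduced expression $x = s_1 \cdots s_n$. A direct computation in the universal setting gives the unique reduced expression of $x \act s$ in $W$ as $s_1 \cdots s_n \cdot s \cdot s_n^* \cdots s_1^*$ (length $2n+1$) and the unique reduced $\I$-expression as $(s_1, \ldots, s_n, s)$ (rank $n+1$). Inspecting Definitions~\ref{dyerdef} and~\ref{afterdyerdef} at $j = n+1$: clause (a) of Definition~\ref{dyerdef} fires iff the letters at positions $n$ and $n+2$ of the reduced word match, i.e.\ iff $s_n = s_n^*$; clause (a) of Definition~\ref{afterdyerdef} is ruled out by the index constraint $j \leq \rho(w)-1 = n$, while clause (b) fires iff $\{s_n, s\} \subset \I$, i.e.\ again iff $s_n = s_n^*$. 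So when $s_n \neq s_n^*$, both sides vanish and we are done.

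When $s_n = s_n^*$, the two clauses produce
\[ c(x \act s, n+1) = c_{w^\flat} + c(w^\flat, n) \quand A(x \act s, n+1) = A_{w^\flat} + A(w^\flat, n), \]
where on the $c$-side $w^\flat = s_1 \cdots s_n s_{n-1}^* \cdots s_1^*$ (two letters deleted) and on the $A$-side $w^\flat = s_1 \act \cdots \act s_n = x \act 1$. The pivotal step is to identify these two elements. Setting $y = s_1 \cdots s_{n-1}$, so that $x \act 1 = y \act s_n$, a brief induction shows that each operator $s_i \act (-)$ in the iteration acts as genuine twisted conjugation by $s_i$ rather than one-sided multiplication: in a universal Coxeter system no equation $s_i v = v s_i^*$ can hold for a nontrivial $v$ whose unique reduced expression does not begin with $s_i$, so the second branch of the definition of $\act$ is always selected at each stage. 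Iterating yields $x \act 1 = s_1 \cdots s_n s_{n-1}^* \cdots s_1^*$, matching the $c$-side.

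With $w^\flat$ identified, the inductive hypothesis finishes the job when applied to $(y, s_n)$: here $\ell(y) = n-1$, $s_n \in S \cap \I$ by assumption, and $s_n \notin \mathrm{Des}_R(y) = \{s_{n-1}\}$ since adjacent letters of a reduced expression differ. The hypothesis gives $c(y \act s_n, n) = \Phi(A(y \act s_n, n))$, i.e.\ $c(w^\flat, n) = \Phi(A(w^\flat, n))$, and adding $\Phi(A_{w^\flat}) = c_{w^\flat}$ closes the induction. The main obstacle is bookkeeping: matching the two prescriptions for $w^\flat$ via the universality of $(W,S)$, and aligning the index shift so that the recursive parameter drops from $n+1$ on $x \act s$ to exactly $\ell(y)+1 = n$ on the smaller element $w^\flat = y \act s_n$; once these are set up, the two definitions collapse into one another.
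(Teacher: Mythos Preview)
Your proof is correct and follows essentially the same approach as the paper's: induct on $n$, unwind one step of each of Definitions~\ref{dyerdef} and~\ref{afterdyerdef} at $j=n+1$, identify the resulting element $w^\flat$ as $x'\act r$ (the paper's notation; your $y\act s_n = x\act 1$), and invoke the inductive hypothesis. The paper dispatches this in three sentences, while you spell out the case distinctions and the identification $x\act 1 = s_1\cdots s_n s_{n-1}^*\cdots s_1^*$ explicitly, but the argument is the same.
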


\begin{proof}
If $x = 1$ or if $\mathrm{Des}_R(x) \not \subset \I$ then the lemma holds since $c(x\act s,n+1) $ and $A(x\act s,n+1) $ are both zero. Assume $x \neq 1$ so that $x=x'r$ for some $y \in W$ and $r\in S \cap \I$ with $\ell(x') = \ell(x) - 1$.
Then 
$ c(x\act s,n+1) = c_{x' \act r} + c(x' \act r, n)$ and $A(x'\act s,n+1) = A_{x' \act r} + A(x'\act r,n)$,
so the lemma follows by induction on $n$.
\end{proof}



We may now state our final result, which establishes Conjecture \descref{C$'$}  in the universal case.

\begin{theorem}\label{last-thm} If $(W,S)$ is a universal Coxeter system and $* \in \Aut(W)$ is any $S$-preserving involution, then the Laurent polynomials $h^\pm_{x,y,z}$ defined by  \eqref{hpm-def} have nonnegative integer coefficients for all $x\in W$ and $y,z \in \I$.  
\end{theorem}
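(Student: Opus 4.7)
The first observation is that the theorem reduces to showing $\wt h_{x,y;z} - h^\sigma_{x,y;z} \in \NN[v,v^{-1}]$ for all $x \in W$ and $y,z \in \I$. Indeed, by Corollary \ref{A-cor} both $\wt h_{x,y;z}$ and $h^\sigma_{x,y;z}$ already have nonnegative coefficients, so $h^+_{x,y;z} = \tfrac{1}{2}(\wt h_{x,y;z} + h^\sigma_{x,y;z})$ is automatically in $\NN[v,v^{-1}]$, granted that it lies in $\ZZ[v,v^{-1}]$ (which is Proposition \ref{h-parity}); only the positivity of the difference $\wt h_{x,y;z} - h^\sigma_{x,y;z}$ requires work.

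My plan is to compare the two defining products directly in the Kazhdan--Lusztig basis of $\cH_q$, via the $\cA$-linear map $\Phi : \cM_{q^2} \to \cH_q$ with $A_w \mapsto c_w$ introduced before Lemma \ref{b-lem}. By construction $\Phi(C_x A_y) = \sum_{z \in \I} h^\sigma_{x,y;z} c_z$, while $c_x c_y c_{x^\dag} = \sum_{z \in W} \wt h_{x,y;z} c_z$; since Corollary \ref{A-cor} already gives $\wt h_{x,y;z} \ge 0$ when $z \notin \I$, it is enough to prove
\[ c_x c_y c_{x^\dag} - \Phi(C_x A_y) \in \sum_{z \in W} \NN[v,v^{-1}] \cdot c_z. \]
Reading off the coefficient of $c_z$ for each $z \in \I$ then yields $\wt h_{x,y;z} \ge h^\sigma_{x,y;z}$.

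To prove this KL-positivity I would expand both sides using the explicit structure theorems. On the module side, Theorem \ref{A-structure} writes $C_x A_y$ as a principal term $A_{x\act y}$ (possibly scaled by $(v+v^{-1})$ or $(q+q^{-1})$) plus at most two correction terms of the form $A(x\act y,n)$ and $A(x\act y,n+1)$; pushing this through $\Phi$ and invoking Lemmas \ref{a-lem} and \ref{b-lem} rewrites each $\Phi(A(u\act t,n))$ as a KL-basis sum $\sum c_{u_i t}$ plus a residual tail $c(u_k t, n-k)$. On the algebra side, I would compute $c_x c_y c_{x^\dag}$ by applying Theorem \ref{c-structure} twice---first to the product $c_y c_{x^\dag}$ (using $y = y^\dag \in \I$) and then to $c_x$ times the result. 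The principal piece $c_{xyx^\dag}$ together with a first layer of $c(\cdot, n)$ corrections should exactly match $\Phi(C_x A_y)$ after the Lemma-\ref{a-lem}/\ref{b-lem} conversion, leaving a collection of further terms, all produced by the second application of Dyer's theorem, which I claim are automatically nonnegative combinations of KL basis elements.

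The main obstacle is thus the case analysis. Theorem \ref{A-structure} already branches on whether $y = 1$, whether $\mathrm{Des}_R(x) = \Des(y) \neq \varnothing$, and on the $*$-fixed status of the relevant generators, while the two applications of Theorem \ref{c-structure} produce several subcases depending on the descent interactions between $x$, $y$, and $x^\dag$. The hard part will be organizing these cases and checking that the leftover terms after the matching are genuine $\NN[v,v^{-1}]$-combinations of KL basis elements, rather than merely having an even difference. I expect to induct on $\ell(x)$, handling $\ell(x) \le 1$ directly via Lemma \ref{universal-structure-lem} and the trivial base case $\ell(x) = 0$ (where $c_x = 1$ and $C_x = 1$ give $\wt h_{x,y;z} = h^\sigma_{x,y;z} = \delta_{y,z}$), and then using a factorization of $c_x$ coming from Theorem \ref{c-structure} to reduce the inductive step to the smaller product $c_{x'} c_y c_{(x')^\dag}$ plus a manifestly nonnegative remainder.
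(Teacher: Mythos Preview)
Your overall strategy---reducing to the inequality $c_x c_y c_{x^\dag} - \Phi(C_x A_y) \in \cH_q^+$ and then comparing the explicit expansions from Theorems~\ref{c-structure} and~\ref{A-structure} with the help of Lemmas~\ref{a-lem} and~\ref{b-lem}---is exactly the paper's approach. The paper organizes the case analysis according to $y$ (the cases $y=1$; $y\neq 1$ with $\Des(y)\neq\mathrm{Des}_R(x)$; and $y\neq 1$ with $\Des(y)=\mathrm{Des}_R(x)$), expanding $c_x c_y$ first and then multiplying by $c_{x^\dag}$, but that reordering is inessential and your third paragraph is the argument actually carried out.

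The induction on $\ell(x)$ you propose at the end, however, is problematic and fortunately unnecessary. Factoring $c_x$ via Theorem~\ref{c-structure} forces you to factor $c_{x^\dag}$ simultaneously, and expanding
\[
(c_{xs}c_s - \epsilon\, c_{x'})\; c_y\; (c_{s^*}c_{(xs)^\dag} - \epsilon\, c_{(x')^\dag})
\]
produces cross terms with minus signs that do not visibly combine into $c_{x'} c_y c_{(x')^\dag}$ plus something in $\cH_q^+$. On the module side there is no right-hand factor of $C_x A_y$ to absorb, so the two sides do not decompose compatibly under this factorization, and the ``manifestly nonnegative remainder'' you hope for does not present itself. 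The paper sidesteps this by using Theorems~\ref{c-structure} and~\ref{A-structure} in their already fully expanded form (where the induction on $\ell(x)$ has been absorbed) and matching terms directly. In the generic case the crux is the pair of claims
\[
c(xy,n)\, c_{x^\dag} - \Phi\bigl(A(x\act y,n)\bigr) \in \cH_q^+
\quad\text{and}\quad
c(xy,n+1)\, c_{x^\dag} - \Phi\bigl(A(x\act y,n+1)\bigr) \in \cH_q^+,
\]
proved by writing $y = z t z^\dag$ with $\ell(t)\le 1$, applying Lemma~\ref{a-lem} to rewrite $c(xy,n)=c(utz^\dag,n)$ as $\sum_i c_{u_i t z^\dag}$ plus a tail, and then applying Theorem~\ref{c-structure} once more to each product $c_{u_i t z^\dag}\, c_{x^\dag}$; the symmetric (diagonal) terms $c_{u_i t (u_i)^\dag}$ that emerge exactly account for $\Phi(A(x\act y,n))$, and everything else lies in $\cH_q^+$ since products of KL basis elements do. So: drop the induction on $\ell(x)$ and carry through the direct case analysis you already outlined.
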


\begin{proof}
Let $\cH_q^+=\NN[v,v^{-1}]\spanning\{c_w : w \in W\}$ denote the set of elements in $\cH_q$ whose coefficients with respect to the  Kazhdan-Lusztig basis $\( c_w\)_{w \in W}$ have nonnegative coefficients. Note that $\cH_q^+$ is preserved by $\dag$ since $(c_w)^\dag = c_{w^\dag}$.

Let $x \in W$ and $y \in \I$. By Theorems \ref{c-structure} and \ref{A-structure} we know that 
$c_x c_y c_{x^\dag} \in \cH_q^+$ and $\Phi(C_x A_y) \in \cH_q^+$, and if we write
$c_x c_y c_{x^\dag} \pm  \Phi(C_x A_y)
= \sum_{z \in W} p^\pm_z c_z$ for some polynomials $p^\pm_z \in  \ZZ[v,v^{-1}]$,
 then by definition $h_{x,y;z}^\pm = \frac{1}{2} p^\pm_z $ for each $z \in \I$. It is thus immediate that every $h^+_{x,y;z}$ has nonnegative coefficients, and to prove the theorem it is  enough to show that 
\be \label{want9.8} 
c_x c_y c_{{x^*}^{-1}}  - \Phi(C_x A_y) \in \cH_q^+.
\ee
To this end, let $n = \ell(x)$. If $n=0$ then \eqref{want9.8} automatically holds since the left hand side is zero, so we may assume $n\geq 1$. 
There are three cases, which we consider in turn:
\ben
\item[(a)] Suppose $y = 1$.  Expand the products 
 $c_x c_y c_{x^\dag} = c_x c_{x^\dag}$ and $C_x A_y = C_x A_1$ according to  Theorems \ref{c-structure} and \ref{A-structure}. These expansions take one of two forms according to 
whether $s=s^*$, and applying Lemma \ref{b-lem} to the  terms in either case shows that \eqref{want9.8} holds.
%

\item[(b)] Suppose $y \neq 1$ and $ \Des(y) 
\neq  \mathrm{Des}_R(x)$. Apply Theorem \ref{c-structure}    to expand the product $c_x c_y c_{x^\dag}$, by  expanding first 
 $c_x c_y$ and then  $(c_x c_y) c_{x^\dag}$. There are again two cases according to whether $y \in S$. On comparing the resulting terms to Theorem \ref{A-structure} (while  noting Lemma \ref{b-lem}), one finds that 
 \eqref{want9.8} will hold if we can prove the following claims:
\ben
\item[(b1)] If $\ell(y) \geq1$ then we have $c(xy,n) c_{x^\dag}  - \Phi\( A(x\act y,n)\) \in \cH_q^+$.
\item[(b2)] If $\ell(y) \geq 2$ then we have $c(xy,n+1) c_{x^\dag} - \Phi\( A(x\act y,n+1)\) \in \cH_q^+$.
\een
To prove (b1), 
 write $y = ztz^\dag$ where $z,t \in W$ such that $\ell(t) \leq 1$ and $t=t^*$ and $\ell(ztz^\dag) = 2\ell(z) + \ell(t)$.
Now let $u=xz$ and let $u=u_0 > u_1 >\dots>u_k$ be the corresponding sequence of elements in $W$ described in Lemma \ref{a-lem}, so that $c(ut,n) = \sum_{i=1}^k c_{u_kt}$.
%
Using part (b) of Lemma \ref{a-lem} and the fact that $\dag$ is an anti-automorphism, we then have
\[\ba  c(xy,n) c_{x^\dag} = c(utz^\dag,n) c_{x^\dag} &=
 \( \sum_{i=1}^k  c_{u_it z^\dag} +    c(u_k tz^\dag,n-k)  \) c_{x^\dag} 
\\&
= \( c_x  \sum_{i=1}^k  c_{zt(u_i)^\dag}  \)^\dag + c(u_k tz^\dag,n-k)  c_{x^\dag} 
\\&
=   \( \sum_{i=1}^k c(ut(u_i)^\dag,n)  \)^\dag + \(\text{an element of }\cH_q^+\)
.
\ea\]
Here, the last equality follows by applying Theorem \ref{c-structure} to the terms in the sum on the second line.
Since each $c(ut(u_i)^\dag,n) = \sum_{j=1}^k c_{u_j t (u_i)^\dag} + \(\text{an element of }\cH_q^+\)$ by parts (a) and (b) of Lemma \ref{a-lem},  after collecting  terms in $\cH_q^+$ we get
\[
c(xy,n) c_{x^\dag} = \sum_{i=1}^k c_{u_i \act t } + c(u_k\act t,n-k) + \(\text{an element of }\cH_q^+\)
.
\]
By part (c) of Lemma \ref{a-lem}, however, we have
$A(x\act y,n) = \sum_{i=1}^k A_{u_i \act t} + \delta\cdot A(u_k \act t,n-k),$
where $\delta \in \{0,1\}$  is zero unless  $n-k = \ell(x_k) + 1$. If $\delta=1$ then $A( u_k\act t,n-k) = 0$ unless $t \in S\cap \I$, and so 
 (b1) follows by Lemma \ref{b-lem}.

One proves (b2) by replacing $n$ with $n+1$ in the preceding argument. Our applications of Lemma \ref{a-lem} remain valid after this substitution because we assume $\ell(y) \geq 2$, which implies $\ell(u) \geq 1$ and in turn  $n+1 \leq \ell(u)$.

\item[(c)] Suppose $y \neq 1$ and $ \mathrm{Des}_R(x) = \Des(y) = \{s\} $ for some $s \in S$. 
Using  Theorems \ref{c-structure} and \ref{A-structure}   to 
expand  the products $(c_x c_y) c_{x^\dag}$ and $C_x A_y$ gives
\[ 
c_xc_y c_{x^\dag} = (q+2+q^{-1}) \cdot c_{xs \act y} + (q+2+q^{-1})\cdot c(xs\act y,m) + (v+v^{-1}) \cdot c(xsy,n)c_{x^\dag}
\]
where $m = \ell(xsy) = n+\ell(y)-1$ and
\[
C_xA_y = (q+q^{-1}) \cdot A_{xs\act y} + (q+q^{-1}) \cdot A(xs\act y,n).
\]
 Note that   $c(xs\act y,m) \in \cH_q^+$ and $c(xsy,n)c_{x^\dag} \in \cH_q^+$ automatically.

We have two cases to consider: either $y = s \in \I$ or $\ell(y)\geq 2$. In the former case $m=n$, and so it follows by Lemma \ref{b-lem} that
 \[c(xs \act y,m) = c(xs\act y,n)  = \Phi\(A(xs\act y,n)\)\]  and therefore \eqref{want9.8} holds.
To deal with the remaining case, assume $\ell(y) \geq 2$. To prove \eqref{want9.8}
it then suffices  to show that
\be\label{3want}  (v+v^{-1})\cdot  c(xsy,n) c_{x^\dag} = (q+q^{-1}) \cdot \Phi\(  A(xs\act y,n) \) + \(\text{an element of }\cH_q^+\).\ee
The proof of this identity is similar to the arguments in part (b). A sketch goes as follows. First write $y = ztz^\dag$ where $z,t \in W$ such that $\ell(t) \leq 1 \leq \ell(z)$ and $t^*=t$ and $\ell(ztz^\dag) = 2\ell(z) + \ell(t)$.
Let $u=xsz$ and let $u=u_0 > u_1 >\dots >u_k$ be the  sequence of elements in $W$ afforded by Lemma \ref{a-lem}, so that $c(ut,n) = \sum_{i=1}^k c_{u_kt}$.
By now rewriting $c(xsy,n) = c(utz^\dag,n)$ in terms of the elements $u_i$ and expanding various products using the properties in Lemma \ref{a-lem}, one obtains 
\[ c(xsy,n) c_{x^\dag} = (v+v^{-1}) \(\sum_{i=1}^k c_{u_i \act t} + c(u_k \act t,n-k)\) +  \(\text{an element of }\cH_q^+\).
\]
Comparing this to the formula for $A(xs\act y,n)$ in part (c) of Lemma \ref{a-lem} then shows that \eqref{3want} holds, as a consequence of  Lemma \ref{b-lem}.
%
\een
\end{proof}

\end{document}